\documentclass[11pt]{article}
\usepackage[margin=1in]{geometry} 
\usepackage[T1,T2A]{fontenc}
\usepackage[utf8]{inputenc}
\usepackage{amssymb,amsfonts,amsmath,bbm,mathrsfs,stmaryrd}
\usepackage{xcolor}
\usepackage{url}
\usepackage{relsize}
\usepackage{enumerate}
\usepackage{tikz-cd}
\usepackage{tikz}
\usepackage{subcaption}
\usepackage{mathtools}
\usepackage{quiver}
\usepackage{extarrows}

\usepackage[cal=boondoxo]{mathalfa}
\usepackage[backend=biber, style=alphabetic, url=false]{biblatex}
\usepackage[colorlinks,
             linkcolor=black!75!red,
             citecolor=blue,
             pdftitle={},
             pdfauthor={x},
             pdfproducer={pdfLaTeX},
             pdfpagemode=FullScreen,
             bookmarksopen=true,
             bookmarksnumbered=true]{hyperref}
\usepackage{tikz}
\usetikzlibrary{arrows,calc,decorations.pathreplacing,decorations.markings,intersections,shapes.geometric,through,fit,shapes.symbols,positioning,decorations.pathmorphing}

\usepackage[amsmath,thmmarks]{ntheorem}
\usepackage{cleveref}
\usepackage{comment}

\creflabelformat{enumi}{label*=\arabic*}

\crefname{section}{Section}{Sections}
\crefformat{section}{#2Section~#1#3} 
\Crefformat{section}{#2Section~#1#3} 

\crefname{subsection}{\S}{\S\S}
\crefformat{subsection}{#2\S#1#3} 
\Crefformat{subsection}{#2\S#1#3} 

\theoremstyle{plain}

\newtheorem{lemma}{Lemma}[section]
\newtheorem{proposition}[lemma]{Proposition}

\newtheorem{theorem}[lemma]{Theorem}

\theoremstyle{nonumberplain}

\theoremstyle{plain}
\theorembodyfont{\upshape}
\theoremsymbol{\ensuremath{\blacklozenge}}

\newtheorem{definition}[lemma]{Definition}
\newtheorem{example}[lemma]{Example}
\newtheorem{remark}[lemma]{Remark}

\crefname{definition}{definition}{definitions}
\crefformat{definition}{#2definition~#1#3} 
\Crefformat{definition}{#2Definition~#1#3} 

\crefname{ex}{example}{examples}
\crefformat{example}{#2example~#1#3} 
\Crefformat{example}{#2Example~#1#3} 

\crefname{remark}{remark}{remarks}
\crefformat{remark}{#2remark~#1#3} 
\Crefformat{remark}{#2Remark~#1#3} 

\crefname{convention}{convention}{conventions}
\crefformat{convention}{#2convention~#1#3} 
\Crefformat{convention}{#2Convention~#1#3} 

\crefname{claim}{claim}{claims}
\crefformat{claim}{#2claim~#1#3} 
\Crefformat{claim}{#2Claim~#1#3} 

\crefname{conjecture}{conjecture}{conjectures}
\crefformat{conjecture}{#2conjecture~#1#3} 
\Crefformat{conjecture}{#2Conjecture~#1#3}

\crefname{lemma}{lemma}{lemmas}
\crefformat{lemma}{#2lemma~#1#3} 
\Crefformat{lemma}{#2Lemma~#1#3} 

\crefname{proposition}{proposition}{propositions}
\crefformat{proposition}{#2proposition~#1#3} 
\Crefformat{proposition}{#2Proposition~#1#3} 

\crefname{question}{question}{questions}
\crefformat{question}{#2question~#1#3} 
\Crefformat{question}{#2Question~#1#3}

\crefname{corollary}{corollary}{corollaries}
\crefformat{corollary}{#2corollary~#1#3} 
\Crefformat{corollary}{#2Corollary~#1#3} 

\crefname{theorem}{theorem}{theorems}
\crefformat{theorem}{#2theorem~#1#3} 
\Crefformat{theorem}{#2Theorem~#1#3} 

\crefname{assumption}{assumption}{Assumptions}
\crefformat{assumption}{#2assumption~#1#3} 
\Crefformat{assumption}{#2Assumption~#1#3} 

\crefname{equation}{}{}
\crefformat{equation}{(#2#1#3)} 
\Crefformat{equation}{(#2#1#3)}

\theoremstyle{nonumberplain}
\theoremsymbol{\ensuremath{\blacksquare}}

\newtheorem{proof}{Proof}

\newcommand{\vcong}{\mathrel{\text{\rotatebox[origin=c]{90}{$\cong$}}}}

\DeclareUnicodeCharacter{2217}{*}
\DeclareUnicodeCharacter{1D436}{$C$}

\def\polhk#1{\setbox0=\hbox{#1}{\ooalign{\hidewidth
    \lower1.5ex\hbox{`}\hidewidth\crcr\unhbox0}}}

\newcommand{\bes}{\begin{equation*}}
\newcommand{\ees}{\end{equation*}}
\newcommand{\be}{\begin{equation}}
\newcommand{\ee}{\end{equation}}

\allowdisplaybreaks

\usetikzlibrary{cd}
\usetikzlibrary{arrows}
\newsavebox{\boxA}
\newsavebox{\boxB}
\newsavebox{\boxC}
\newsavebox{\boxD}
\newsavebox{\boxE}
\newsavebox{\boxF}
\begin{document}

\baselineskip=15pt

\title{Quantum real projective spaces as quantum CW-complexes via topological graphs}
\author{
  Atul Gothe
}

\date{}

\newcommand{\Addresses}{{
  \footnotesize  
  
  \textsc{Instytut Matematyczny Polskiej Akademii Nauk, ul. \'Sniadeckich 8, 00-656 Warszawa, Poland}\par\nopagebreak
  \textsc{Instytut Matematyki, Uniwersytet Warszawski, ul. Stefana Banacha 2, 02-097 Warszawa, Poland}\par\nopagebreak
  \textit{E-mail address}:
  \texttt{a.gothe@uw.edu.pl}
}}

\maketitle

\begin{abstract}
We use the notion and properties of topological graphs to construct a quantum CW complex for quantum real projective spaces. This construction mirrors the CW complex construction in the classical case. Moreover, we introduce multipullback even-spheres as boundaries of noncommutative "cylinders" and show that they admit a gluing akin to the gluing of balls to the ends of a hollow closed cylinder.
\end{abstract}

\noindent {\em Key words:} topological graph, quantum CW complex, topological graph C*-algebra, quantum real projective space, multipullback C*-algebra

\vspace{.5cm}

\tableofcontents

\section{Introduction}
Classically, a CW-complex is a topological space which is obtained by gluing together balls of different dimensions in a specific way. They provide a convenient category of topological spaces to work with, and many of the classical spaces admit a CW structure. There have been numerous attempts to generalise CW-complexes to the realm of noncommutative topology, most notably by Eilers-Loring-Pedersen in \cite{pedersen_stability_1998} and more recently by \cite{dandrea_quantum_2026}. A lot of $C^*$-algebras admit a quantum CW complex structure, most notably, Podleś sphere, quantum complex projective spaces, quantum teardrops, etc, we refer to \cite{dandrea_quantum_2026} for an extensive survey. $C^*$-algebras of quantum real projective spaces were first defined by Hong-Szymański \cite{hong_quantum_2002}, and furthermore they show that they admit a directed graph structure. Similarly, by the means of a quantum double suspension, Hong-Szymański, defined the $C^*$-algebras of quantum balls \cite{hong_noncommutative_2008}. The even-dimensional balls admit a presentation in terms of $C^*$-algebras of directed graphs, however,the odd-dimensional balls cannot admit such a presentation \cite{hong_noncommutative_2008}\cite{foster_quantum_2005}. However, in \cite{gothe_gluing_2026}, it was shown that they do admit a presentation in terms of topological graphs. 

Topological graphs and their $C^*$-algebras were introduced by Katsura in \cite{katsura_class_2004} and extend the concept of directed graphs to locally compact topologies on the edge and the vertex spaces. One can construct a $C^*$-correspondence corresponding to the data of a topological graph. The $C^*$-algebras corresponding to topological graphs are precisely the Cuntz-Pimsner algebras originating from these $C^*$-correspondences. As examples they include graph $C^*$-algebras and homeomorphism $C^*$-algebras. They have fully classified gauge-invariant ideal structure and their K-groups can be computed from a six term exact sequence.

We utilise the concept and properties of topological graph $C^*$-algebras to give a quantum CW-complex structure of the $C^*$-algebras of quantum real projective spaces. The maps constructed mirror the construction of CW-complexes for classical real projective spaces and provide credence to the ideas presented.

The mulitipullback $C^*$-algebras were defined in \cite{pedersen_pullback_1999}, as a way to dualise gluings with multiple legs. As a result they provide new examples of $C^*$-algebras such as the multipullback quantum odd-dimensional spheres and multipullback quantum complex projective spaces \cite{hajac_k-theory_2018}. The latter, in particular also admit a quantum CW decomposition. Building on this, we define the $C^*$-algebra of multipullback quantum even-spheres and show that they admit a mutipullback structure akin to the classical construction of a suspension by gluing cones over the base space.

\section{Topological graphs}
We start with the definition of a topological graph. Topological graphs were introduced by Katsura in \cite{katsura_class_2004} as a way to generalise $C^*$-algebras originating from directed graphs and topological dynamical system. As it turns out, they have a rich structure and many properties of the $C^*$-algebras can be written in terms of the topological and combinatorial properties of the underlying graphs. And although they were originally defined to generalise graph $C^*$-algebras, many well-known $C^*$-algebras which cannot admit a discrete graph presentation can admit a topological graph presentation.

 A $\mathbf{topological \ graph}$ is a quadruple $(E^0,E^1,s,r)$ such that $E^0,E^1$ are locally compact spaces $s ,r : E^1 \xrightarrow{} E^0$ are continuous maps such that $r$ is further a local homeomorphism.

Before moving further with topological graphs, we will provide a primer for the construction of Cuntz-Pimsner algebras from $C^*$-correspondences. 

\begin{definition}
A Hilbert $A$-module X is a $C^*$-correspondence if it is equipped with a $*$-homomorphism $\varphi_X : A \xrightarrow[]{} \mathcal{L}(X)$.
\end{definition}

\begin{definition}
A representation $(\psi^1, \psi^0)$ of a $C^*$-correspondence on a $C^*$-algebra $B$ is a pair of maps $\psi^0 : A \xrightarrow[]{} B$ is a $*$-homomorphisms and $\psi^1: X \xrightarrow[]{} B$ is a linear map such that the following natural conditions hold:
\begin{enumerate}
    \item $\psi^1(x)^* \psi^1(y) = \psi^0(\langle x , y\rangle_A)$
    \item $\psi^0(a)\psi^1(x) = \psi^1(\varphi_X(a) x)$
\end{enumerate}
\end{definition}

To define the Cuntz-Pimsner algebra, we need the notion of a Katsura ideal -
\begin{definition}
Let $X$ be a $C^*$-correspondence over $A$. Then the Katsura ideal $J_X$ is:

\begin{align*}
    J_X &= \varphi_X^{-1} (\mathcal{K(X)}) \cap (ker\varphi_X)^{\perp}  \\
        &= \{ a \in A \ | \ \varphi_X(a) \in \mathcal{K}(X) \text{ and } ab = 0 \text{ for all } b \in ker\varphi_X \}
\end{align*}
\end{definition}

\begin{definition}(\cite[Definition~3.4]{katsura_c-algebras_2004})
A representation $(\psi^1,\psi^0)$ of a C*-correspondence $X$ over $A$ on a C*-algebra $B$ is {\em covariant} if $\psi^0(a)=\psi^{(1)}(\varphi_X(a))$ for all $a\in J_X$, where $\psi^{(1)}(\theta_{\eta,\xi}) = t^1(\xi)^*t^1(\eta)$ 
\end{definition}

\begin{definition}(\cite[Definition~3.5]{katsura_c-algebras_2004}, cf.~\cite{voiculescu_free_1996})
The {\em Cuntz--Pimsner algebra} $\mathcal{O}_X$ associated to a correspondence $X$ over $A$ is the C*-algebra $C^*(t^1_X,t^0_X)$ generated by the universal covariant representation $(t^1_X,t^0_X)$ of $(X,A)$. 
\end{definition}
By universality, for every covariant representation $(\psi^1,\psi^0)$ of $X$ on $B$, there is a~unique $*$-ho\-mo\-mor\-phism $\psi:\mathcal{O}_X\to B$ onto the C*-algebra $C^*(\psi^1,\psi^0)$ (generated by the images of $\psi^1$ and $\psi^0$) such that $\psi\circ t^1_X=\psi^1$ and $\psi\circ t^0_X=\psi^0$. Next, the formulas
\begin{equation}\label{cuntzgauge}
\alpha_z(t^0_X(a)):=t^0_X(a),\qquad \alpha_z(t^1_X(\xi)):=zt^1_X(\xi),\qquad z\in\mathbb{T},\quad a\in A,\quad \xi\in X,
\end{equation}
give rise to an action $\alpha:\mathbb{T}\to{\rm Aut}(\mathcal{O}_X)$, called the~{\em gauge action}. A  $*$-homomorphism between Cuntz--Pimsner algebras which is $\mathbb{T}$-equivariant with respect to gauge actions is called {\em gauge-equivariant}. If $I$ is an ideal of $\mathcal{O}_X$ such that $\alpha_z(I)\subseteq I$ for all $z\in\mathbb{T}$, then $I$ is {\em gauge-invariant}.

\subsection{Topological graphs}

We refer the reader to~\cite{katsura_class_2004,katsura_class_2006,katsura_class_2006-1} for an extensive study of topological graphs and their C*-algebras (note, however, that we use a different source-range convention). A \textit{topological graph} $E$ is a~quadruple $(E^0,E^1,s_E,r_E)$, where $E^0$ and $E^1$ are locally compact spaces and $s_E,r_E:E^1\to E^0$ are continuous maps such that $r_E$ is furthermore a local homeomorphism. $E^0$ is referred to as the space of vertices while $E^1$ is the edge space. $s_E$ and $r_E$ are called source and range maps respectively.

Now we will give a brief description of the construction of $C^*$-correspondence associated to a topological graph. To fix notation, denote $A_E \vcentcolon C_0(E^0)$. We define the following $A$-valued bilinear function on $C(E^1)$:
\begin{equation}\label{inner}
\langle \xi,\eta\rangle(v) \vcentcolon \sum_{e\in r_E^{-1}(v)}\overline{\xi(e)}\eta(e).
\end{equation}

Next, define the following subspace of the space of compactly supported functions on $E^1$:
\[
X_E:=\left\{\xi\in C_c(E^1)~:~\langle\xi,\xi\rangle\in A_E\right\}.
\] 
With this definition in hand $(X_E,A)$ is a C*-correspondence with the $A$-valued Hilbert inner product~\eqref{inner} and the left and the right multiplication given by composition with the source map and the range map respectively, i.e.:
\[
(\xi\cdot f)(e):=\xi(e)f(r_E(e)),\qquad (f\cdot\xi)(e):=f(s_E(e))\xi(e),\qquad f\in A,\quad \xi\in X_E.
\]
The $*$-homomorphism $\varphi_E : A_E \xrightarrow[]{} \mathcal{L}(X_E)$ is given by the left action.

The \textit{topological graph algebra} $C^*(E)$ is defined as the Cuntz-Pimsner of the $C^*$-correspondence $(X_E,A_E)$.

\begin{example}
If $E^0, E^1$ are discrete, we recover the usual graph $C^*$-algebras.
\end{example}
\begin{example}
If $X$ is a locally compact space and $\sigma : X \xrightarrow[]{} X$ is a homeomorphism on $X$, then the topological graph algebra corresponding to the topological graph $(X,X, s= id_X, r = \sigma)$ is the homeomorphism $C^*$-algebra $C_0(X) \rtimes \mathbb{Z}$. 
\end{example}

We have the following subsets of $E^0$:
\begin{align*}
E_{\rm fin}^0&:=\{v\in E^0~:~\text{there is a neighborhood $V$ of $v$ such that $s_E^{-1}(V)$ is compact}\},\\
E^0_{\rm inf}&:=E^0\setminus E^0_{\rm fin},\qquad E_{\rm sink}^0:=E^0\setminus \overline{s_E(E^1)},\\
E_{\rm rg}^0&:=E^0_{\rm fin}\setminus \overline{E^0_{\rm sink}},\qquad E_{\rm sg}^0:=E^0\setminus E^0_{\rm rg}.
\end{align*}

A topological graph $E$ is called \textit{row-finite} if $s_E(E^1)=E^0_{\rm rg}$~\cite[Definition~3.22]{katsura_class_2006-1}. This generalises the notion of row-finiteness of discrete graphs~(see, e.g.~\cite{bates_c-algebras_2000}) but is not equivalent to the naive condition $F^0=F^0_{\rm fin}$ for general topological graphs (see Examples~3.21 and~3.23 in~\cite{katsura_class_2006-1}).

To characterise the ideal structure of topological graph algebras in terms of certain combinatorial and topological properties of $E^0$, we define the following subsets:

A subset $Y^0 \subseteq E^0$ is called \textit{positively invariant} if $r_E(e)\in Y^0$ implies $s_E(e)\in Y^0$ for every $e\in E^1$ and \textit{negatively invariant} if for all $v\in Y^0\cap E^0_{\rm rg}$ there exists an $e\in E^1$ such that $s_E(e)=v$ and $r_E(e)\in Y^0$. Furthermore, $Y^0$ is \textit{invariant} if it is both positively and negatively invariant.
\begin{definition}
(\cite[Definition~2.3]{katsura_class_2006-1})
An \textit{admissible pair} $\rho = (Y^0,Z^0)$ is a pair of closed subsets of $E^0$ such that $Y^0$ is invariant and $Z^0$ is such that $Y^0_{\rm sg}\subseteq Z^0\subseteq E^0_{\rm sg}\cap Y^0$.
\end{definition}

Let $I$ be a gauge-invariant ideal of $C^*(E)$. By~\cite[Proposition~2.8]{katsura_class_2006-1}, one can associate an admissible pair to $I$ in the following way:
\begin{align}\label{admideal}
Y^0_I&:=\{v\in E^0~:~f(v)=0\text{ for all }f\in C_0(E^0)\text{ such that }t^0_E(f)\in I\},\\
Z^0_I&:=\{v\in E^0~:~f(v)=0\text{ for all }f\in C_0(E^0)\text{ such that }t^0_E(f)\in I+(t^1_E)^{(1)}(\mathcal{K}(X_E))\}.\nonumber
\end{align}
then $\rho = (Y^0_I, Z^0_I)$ is an admissible pair.

Conversely, given an admissible pair $\rho=(Y^0,Z^0)$, there exists a gauge-invariant ideal $I_\rho$ (see~\cite[Proposition~3.5]{katsura_class_2006-1}).
 
\begin{theorem}\label{idealclass}
{\rm (\cite[Theorem~3.19]{katsura_class_2006-1})}
For a topological graph $E$, the assignment $I\mapsto \rho_I=(Y^0_I,Z^0_I)$ and $\rho=(Y^0,Z^0)\mapsto I_\rho$ give rise to an inclusion-reversing one-to-one correspondence between the set of gauge-invariant ideals of $C^*(E)$ and the set of admissible pairs in $E^0$.
\end{theorem}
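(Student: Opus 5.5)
The plan is to follow Katsura's strategy and prove both composites are the identity. First I would check that $I\mapsto\rho_I$ lands in admissible pairs, as asserted via \cite[Proposition~2.8]{katsura_class_2006-1}. Next I would show that $\rho\mapsto I_\rho$ is a left inverse of $I\mapsto \rho_I$ on admissible pairs. Finally I would show it is a right inverse on gauge-invariant ideals. Inclusion reversal is immediate from the definitions \eqref{admideal}: a larger $I$ kills more functions, so both $Y^0_I$ and $Z^0_I$ shrink. Conversely, $I_\rho$ is built from functions vanishing on $Y^0$ (and on $Z^0$ in the covariance part), so it grows as $\rho$ shrinks.

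For the construction of $I_\rho$ I would use the quotient topological graph. Given $\rho=(Y^0,Z^0)$, consider the restricted graph $Y=(Y^0, r_E^{-1}(Y^0), s_E, r_E)$; positive invariance of $Y^0$ makes this a topological graph. Then adjoin a "boundary" copy of $Z^0\setminus Y^0_{\rm rg}$ to form a new topological graph $E_\rho$ whose vertex space is $Y^0\cup \overline{Y^0_{\rm rg}\setminus Z^0}$ suitably doubled. One shows there is a surjective gauge-equivariant $*$-homomorphism $C^*(E)\to C^*(E_\rho)$. It is obtained by exhibiting a covariant representation of $(X_E,A_E)$ in $C^*(E_\rho)$ via restriction of functions. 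The relaxed covariance on $Z^0$ is exactly what makes the Katsura-ideal condition hold, and negative invariance is what ensures elements of $J_{X_E}$ restrict into the appropriate Katsura ideal. Define $I_\rho$ as its kernel.

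For the composite $\rho\mapsto I_\rho\mapsto\rho_{I_\rho}$, I would compute $t^0_E(f)\in I_\rho$ iff $f|_{Y^0}=0$, using injectivity of $t^0$ on $C_0$ of the vertex space of $E_\rho$. This is Katsura's injectivity result for the universal covariant representation, and it gives $Y^0_{I_\rho}=Y^0$. The analogous computation with $(t^1)^{(1)}(\mathcal K)$ gives $Z^0$.

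For the other composite, take $I$ gauge-invariant and set $\rho=\rho_I$. Clearly $I_\rho\subseteq I$, by checking the generators. The quotient $C^*(E)/I$ then receives a map from $C^*(E_\rho)$ that is gauge-equivariant and injective on the vertex algebra by the definition of $Y^0_I,Z^0_I$. By the gauge-invariant uniqueness theorem for topological graph algebras it is injective, so $I=I_\rho$.

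The main obstacle is the precise definition of $E_\rho$ together with the verification of covariance and vertex-injectivity. These require the careful analysis of $E^0_{\rm rg}$ versus $E^0_{\rm sg}$ for restricted graphs, namely that $Y^0_{\rm sg}\subseteq Z^0$. In practice I would simply invoke \cite[Theorem~3.19]{katsura_class_2006-1}, since the statement is quoted verbatim from there.
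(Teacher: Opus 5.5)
The paper gives no argument for this statement. It is imported verbatim from \cite[Theorem~3.19]{katsura_class_2006-1}, so your closing sentence is exactly the paper's treatment. The architecture of your sketch (a quotient graph $E_\rho$, $I_\rho:=\ker\bigl(C^*(E)\to C^*(E_\rho)\bigr)$, injectivity of $t^0$ on $C_0(E^0_\rho)$ for $\rho_{I_\rho}=\rho$, gauge-invariant uniqueness for $I_{\rho_I}=I$) is the right one.

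As a self-contained argument, however, the sketch fails at its central construction, because you adjoin copies of the wrong vertices. Since $Y^0_{\rm sg}\subseteq Z^0\subseteq Y^0$, your set $Z^0\setminus Y^0_{\rm rg}$ is just $Y^0_{\rm sg}$, where no covariance is imposed anyway. Your other set, $\overline{Y^0_{\rm rg}\setminus Z^0}=\overline{Y^0\setminus Z^0}$, is essentially where covariance \emph{must} hold in the quotient. The vertices needing sink copies are those regular in the restricted graph at which covariance is to be dropped, i.e.\ $Z^0\cap Y^0_{\rm rg}=Z^0\setminus Y^0_{\rm sg}$, together with copies of the edges ranging into them (same sources). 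Turning this into a topological graph with a surjection from $C^*(E)$ also needs care at points of $\overline{Z^0\cap Y^0_{\rm rg}}\cap Y^0_{\rm sg}$.

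A discrete example shows the error is not cosmetic. Let $E^0=\{v\}\cup\{w_n\}_{n\in\mathbb{N}}$, with a loop $e$ at $v$ and edges $f_n$, $s(f_n)=v$, $r(f_n)=w_n$. Since $v$ emits infinitely many edges, $E^0_{\rm rg}=\emptyset$. The set $Y^0=\{v\}$ is invariant, with $Y^0_{\rm rg}=\{v\}$ and $Y^0_{\rm sg}=\emptyset$. So both $(\{v\},\emptyset)$ and $(\{v\},\{v\})$ are admissible, with quotients $C(\mathbb{T})$ and $\mathcal{T}$ respectively. Your first recipe adjoins nothing for either pair, so $\rho\mapsto I_\rho$ is not injective. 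Your second adjoins a copy of $v$ exactly when $Z^0=\emptyset$, which swaps the two ideals, so $\rho_{I_\rho}\neq\rho$.

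Even with the corrected $E_\rho$, three steps you treat as routine carry real content:
\begin{enumerate}
\item The ``analogous computation'' giving $Z^0_{I_\rho}=Z^0$ rests on Katsura's fact that $t^0(g)\in(t^1)^{(1)}(\mathcal{K}(X_F))$ iff $g\in C_0(F^0_{\rm rg})$. It is applied to $F=E_\rho$, whose adjoined vertices are sinks.
\item ``$I_\rho\subseteq I$ by checking the generators'' presupposes a generating set for a kernel you never describe. Equivalently, you must actually construct $C^*(E_\rho)\to C^*(E)/I$. This means sending the adjoined sinks essentially to gap elements $t^0(f)-(t^1)^{(1)}(\varphi(f))$ formed over the restricted graph. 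Verifying covariance on $Y^0\setminus Z^0_I$ and non-vanishing on the adjoined part is exactly where the definition of $Z^0_I$ is used.
\item Inclusion reversal of $\rho\mapsto I_\rho$ needs that same missing description; it is not immediate once $I_\rho$ is defined as a kernel.
\end{enumerate}
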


\begin{definition}(\cite[Definition~5.1]{katsura_class_2006-1})
Let $V \subseteq E^0$. Then $V$ is called \textit{hereditary} if $r(s^{-1}(V)) \subseteq V$ and \textit{saturated} if for $v \in E^0_{rg}$ with $r(s^{-1}(v) \subseteq V$ implies $v \in V$. For $V \subseteq E^0$, define:
\begin{align*}
    H(V) = \bigcup_{n=0}^{\infty} r^n((s^{n})^{-1}(V))
\end{align*}
and,
$S(V) = \bigcup V_k$ with $V_0 = V$ and $V_{k+1}$ defined inductively as:
\begin{align*}
    V_{k+1} = V_k \cup \{ v \in E^0_{rg} \vert r(s^{-1}(v)) \subseteq V \}
\end{align*}
\label{hss}
\end{definition}

Next we need to have a notion of morphisms between topological graphs. 
\begin{definition}\label{factorkatsura}
(\cite[Definition~2.1, Definition~2.6]{katsura_class_2006})
Let $E$ and $F$ be topological graphs. Then, a \textit{regular factor map} $m : E \xrightarrow[]{} F$ is a pair of proper continuous maps, $m^1:E^1\xrightarrow[]{} F^1$ and $m^0:E^0 \xrightarrow[]{} F^0$ such that the following hold:
\begin{enumerate}
\item[(F1)] $r_F(m^1(e))=m^0(r_E(e))$ and $s_F(m^1(e))=m^0(s_E(e))$ for all $e\in E^1$.
\item[(F2)] If $x\in F^1$ and $v\in E^0$ satisfies $r_F(x)=m^0(v)$, then there exists a unique element $e\in E^1$ such that $m^1(e)=x$ and $r_E(e)=v$.
\item[(F3)] $m^0(E^0_{\rm sg})\subseteq F^0_{\rm sg}$.
\end{enumerate}
\end{definition}
\begin{remark}
The definition of regularity in~\cite[Definition~2.6]{katsura_class_2006} is slightly different. However, it is equivalent to our definition due to~\cite[Lemma~2.7]{katsura_class_2006}.
\end{remark}
Given a regular factor map $m:E\to F$, we get a pair of contravariantly induced maps:
\begin{equation}\label{precomp}
\mu^0:A_F\to A_E,\qquad (\mu^0(f))(v):=f(m^0(v)),\qquad
\mu^1:X_F\to X_E,\qquad (\mu^1(\xi))(e):=\xi(m^1(e)).
\end{equation}
The following result justifies the definition of a regular factor map.
\begin{proposition}\label{functc*}
{\rm (\cite[Proposition~2.9]{katsura_class_2006})}
Let $m:E\to F$ be a regular factor map. Then there exists a unique gauge-equivariant $*$-homomorphism $\mu:C^*(F)\to C^*(E)$ such that $\mu\circ t_F^i=\mu^i$, $i=0,1$, where $\mu^0$ and $\mu^1$ are defined by~\eqref{precomp}.
\end{proposition}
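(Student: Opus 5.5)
The plan is to build a covariant representation of $(X_F,A_F)$ inside $C^*(E)$ and then invoke the universal property of the Cuntz--Pimsner algebra $C^*(F)=\mathcal{O}_{X_F}$. I take $\psi^0:=t^0_E\circ\mu^0$ and $\psi^1:=t^1_E\circ\mu^1$.

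The first step checks that these maps land where they should. Since $m^0$ is proper and continuous, $\mu^0$ sends $C_0(F^0)$ into $C_0(E^0)$ and is a $*$-homomorphism. For $\xi\in X_F$, the function $\mu^1(\xi)$ is continuous, and its support lies in $(m^1)^{-1}(\operatorname{supp}\xi)$, which is compact because $m^1$ is proper.

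The key computation uses (F2): for each $v\in E^0$, $m^1$ restricts to a bijection $r_E^{-1}(v)\to r_F^{-1}(m^0(v))$. Consequently
\[
\langle\mu^1(\xi),\mu^1(\eta)\rangle(v)=\sum_{x\in r_F^{-1}(m^0(v))}\overline{\xi(x)}\eta(x)=\mu^0(\langle\xi,\eta\rangle)(v).
\]
This shows simultaneously that $\mu^1(\xi)\in X_E$ and that $\psi^1(\xi)^*\psi^1(\eta)=\psi^0(\langle\xi,\eta\rangle)$. The left and right module compatibilities, $\mu^1(f\cdot\xi)=\mu^0(f)\cdot\mu^1(\xi)$ and $\mu^1(\xi\cdot f)=\mu^1(\xi)\cdot\mu^0(f)$, follow directly from (F1). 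So $(\psi^1,\psi^0)$ is a Toeplitz representation.

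The main obstacle is covariance, i.e.\ showing $\psi^0(f)=\psi^{(1)}(\varphi_F(f))$ for $f\in J_{X_F}$. First I identify the Katsura ideal for a topological graph as $J_{X_F}=C_0(F^0_{\rm rg})$, which is Katsura's description. Condition (F3) gives $m^0(E^0_{\rm sg})\subseteq F^0_{\rm sg}$, hence $(m^0)^{-1}(F^0_{\rm rg})\subseteq E^0_{\rm rg}$, so $\mu^0(J_{X_F})\subseteq J_{X_E}$. Next, for $f\in J_{X_F}$ with compact support in $F^0_{\rm rg}$, I write $\varphi_F(f)=\sum_i\theta_{\xi_i,\eta_i}$ approximately, using a partition of unity subordinate to sets on which $r_F$ is injective. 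I then check the identity $\mu^1\circ\varphi_F(f)=\varphi_E(\mu^0(f))\circ\mu^1$ together with $\mu^1(\theta_{\xi,\eta}\zeta)=\theta_{\mu^1\xi,\mu^1\eta}\mu^1\zeta$. The intended conclusion is $\varphi_E(\mu^0 f)=\sum_i\theta_{\mu^1\xi_i,\mu^1\eta_i}$. Because the fibers of $r_E$ map bijectively by (F2), the pulled-back frame $\{\mu^1\xi_i\}$ again serves as a local frame for $\mu^0(f)$ over $E^0$, so this identity holds in $\mathcal{K}(X_E)$. Covariance of $(t^1_E,t^0_E)$ on $J_{X_E}$ then yields
\[
\psi^{(1)}(\varphi_F(f))=t^{1(1)}_E(\varphi_E(\mu^0 f))=t^0_E(\mu^0 f)=\psi^0(f).
\]
I expect the frame argument, with its density and approximation details, to be the delicate part.

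With covariance established, universality gives $\mu:C^*(F)\to C^*(E)$ satisfying $\mu\circ t^i_F=\mu^i$, where $\mu^i$ here means $t^i_E\circ\mu^i$. Uniqueness holds because $C^*(F)$ is generated by the images of $t^0_F$ and $t^1_F$. Gauge-equivariance follows by checking $\mu\circ\alpha_z$ and $\alpha_z\circ\mu$ on these generators: both fix $t^0_F(f)$ and both multiply $t^1_F(\xi)$ by $z$. They agree on generators and are $*$-homomorphisms, so they coincide.
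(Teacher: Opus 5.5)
Your proposal is correct and is the standard argument behind this result, which the paper does not reprove but cites from Katsura: build the Toeplitz pair $(t^0_E\circ\mu^0,t^1_E\circ\mu^1)$, use (F2) for the inner products and the compact operators, use (F3) to get $\mu^0(C_0(F^0_{\rm rg}))\subseteq C_0(E^0_{\rm rg})$, and conclude by universality. The step you flag as delicate is in fact exact: for $f\in C_c(F^0_{\rm rg})$ one has $\varphi_F(f)=\sum_i\theta_{(f\circ s_F)u_i,u_i}$, where $\sum_i u_i^2=1$ on the compact set $s_F^{-1}(\operatorname{supp}f)$ and $r_F$ is injective on each $\operatorname{supp}u_i$; the uniqueness in (F2) then makes $r_E$ injective on $(m^1)^{-1}(\operatorname{supp}u_i)$, so the pulled-back sum equals $\varphi_E(\mu^0 f)$ exactly and only density in $f$ remains.
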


An important result for directed graph algebras is that of a Cuntz-Krieger uniqueness theorem. This allows one to construct a faithful image of a graph algebra inside a $C^*$-algebra. The counterpart of this theorem for the case of topological graphs is extremely useful as well and needs a few more preliminaries.

A path in a topological graph is a tuple of edges such that the range of an edge is the source of the proceeding edge. More precisely, a \textit{path} $e = (e_1,e_2,\dots,e_n) \in (E^1)^{\times n}$ is a tuple such that $r(e_i) = s(e_{i+1})$ for all $1\leq i \leq n-1$. The set of all paths of length $n$ is denoted by $E^n$.
\begin{definition}
A path in $e = (e^1,e^2, \dots, e^n)$ in $E^n$ with $n \geq 1$ is a \textit{loop} if $r(e^n) = s(e^1)$. A vertex in $e$ is called a base point. A loop is said to be \textit{without exits} if $s^{-1}(s(e^k)) = e^k$.
\end{definition}
\begin{definition}
A topological graph $E$ is called \textit{topologically free} if the set of base points of loops without exits has empty interior.
\end{definition}
\begin{remark}
Similar results for general Cuntz-Pimsner algebras appear in \cite{paulovicks_cuntz-krieger_2025}, referred to as Condition (S).  
\end{remark}
For topologically free topological graphs we have the following Cuntz-Krieger uniqueness theorem:
\begin{theorem}(\cite[Theorem 5.12]{katsura_class_2004}
If $E$ is topological graph such that it is topologically free and $B$ is any $C^*$-algebra. Then if $\rho : C^*(E) \xrightarrow[]{} B$ is a $*$-homomorphism such that $\rho \vert_{t^0(C_0(E^0))}$ is injective, then $\rho$ is injective.
\end{theorem}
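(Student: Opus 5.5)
We need to prove Katsura's Cuntz–Krieger uniqueness theorem for topologically free topological graphs. The plan is to follow the standard route through the gauge-invariant uniqueness theorem together with a "cutting down" argument on the fixed-point algebra.

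\textbf{Step 1: reduction to the kernel.} Let $I=\ker\rho$. Since $\rho$ is injective on $t^0(C_0(E^0))$, we have $I\cap t^0(C_0(E^0))=0$. The goal is to show $I=0$. If $I$ were gauge-invariant, its admissible pair would satisfy $Y^0_I=E^0$, because no nonzero $f$ with $t^0(f)\in I$ exists. Katsura's ideals with $Y^0=E^0$ are contained in the ideal generated by $t^0(f)-(t^1)^{(1)}(\varphi(f))$ for $f$ supported in $E^0_{\rm sg}$ away from $Z^0$. In the Cuntz–Pimsner algebra these elements already vanish on the Katsura ideal $J_{X_E}=C_0(E^0_{\rm rg})$. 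Since $Z^0\subseteq E^0_{\rm sg}$ and $Y^0_{\rm sg}=E^0_{\rm sg}\subseteq Z^0$, we get $Z^0=E^0_{\rm sg}$. Hence $\rho_I=(E^0,E^0_{\rm sg})=\rho_0$, and by Theorem~\ref{idealclass} this gives $I=0$. So gauge-invariant kernels are handled by Theorem~\ref{idealclass}.

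\textbf{Step 2: the general case.} Here we use the conditional expectation $\Phi=\int_{\mathbb T}\alpha_z\,dz$ onto the fixed-point algebra $\mathcal F$. The key estimate to establish is
\[
\|\rho(\Phi(x))\|\le\|\rho(x)\|\qquad\text{for all }x\in C^*(E).
\]
Given this, if $\rho(x)=0$ then $\rho(\Phi(x^*x))=0$. The fixed-point algebra is an inductive limit of algebras built from $\mathcal K(X_E^{\otimes n})$ and $C_0(E^0)$, and injectivity on $t^0(C_0(E^0))$ propagates to injectivity on $\mathcal F$. This propagation can be shown using the Katsura-ideal covariance and a standard induction on $n$, in which $\rho$ restricted to $\psi^{(n)}(\mathcal K(X^{\otimes n}))$ is injective whenever $\rho\circ t^0$ is. Therefore $\Phi(x^*x)=0$, and since $\Phi$ is faithful, $x=0$.

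\textbf{Step 3: the estimate, which is the main obstacle.} It suffices to treat $x=\sum_{|k|\le N}x_k$ with $x_k$ in the spectral subspaces, each a finite sum of words $t^n(\xi)t^m(\eta)^*$ with $n-m=k$. Topological freeness enters here. Fix $\varepsilon>0$. We look for a positive function $f\in C_0(E^0)$ with $\|f\|=1$, supported near a vertex $v$, together with a path-localising element, such that
\[
\|t^0(f)\,x_0\,t^0(f)\|\ge\|x_0\|-\varepsilon \quad\text{and}\quad t^0(f)\,x_k\,t^0(f)=0 \text{ for } 0<|k|\le N.
\]
The vertex $v$ is chosen by the following argument. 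Because base points of loops without exits have empty interior, we can choose $v$ in a suitable open set, determined by where $\|x_0\|$ is nearly attained, such that $v$ is not a base point of a loop of length at most $N$. If $v$ is only a base point of loops that have exits, we move along an exit path to a vertex where this holds. Taking $f$ supported in a small neighbourhood then separates the paths of lengths $n\ne m$ using local homeomorphism of $r$, which allows shrinking supports so that $s^n$- and $r^m$-images are disjoint.

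With $f$ in hand, Step 2 gives
\[
\|\rho(x_0)\|-\varepsilon\le\|\rho(t^0(f)\,x\,t^0(f))\|\le\|\rho(x)\|,
\]
using that $\rho$ is isometric on $\mathcal F$ for the first inequality. The technical heart is the topological bookkeeping on the neighbourhood, which handles infinitely many paths through compactness of supports of the $\xi$'s. I expect to follow Katsura's argument with "$v$ has an aperiodic neighbourhood up to length $N$" as the key lemma.
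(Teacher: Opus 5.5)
The paper gives no proof of this statement; it quotes it from Katsura, so your proposal has to stand on its own. Its architecture is the standard one and is sound: the faithful expectation $\Phi$, injectivity of $\rho$ on the core for a covariant representation injective on $t^0(C_0(E^0))$, and the reduction to $\|\rho(\Phi(x))\|\le\|\rho(x)\|$ on finite sums. (Step 1 is not needed; there, $Z^0_I=E^0_{\rm sg}$ follows at once from admissibility.)

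\textbf{The gap is in Step 3.} The two displayed requirements on $t^0(f)$ cannot be met in general, for any choice of $f$ and $v$. Indeed,
$$t^0(f)\,t^n(\xi)\,t^m(\eta)^*\,t^0(f)=t^n((f\circ s)\xi)\,t^m((\bar f\circ s)\eta)^*,$$
so a vertex function only sees the \emph{initial} vertices of the paths. Such a word vanishes exactly when no path in one support ends where a path in the other support ends. This has nothing to do with loops, and the norm of a core element lives over the \emph{terminal} vertices.

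Concretely, take the acyclic (hence topologically free) graph with edges $a,c_1,c_2$, where $s(a)=s(c_1)=u$, $r(c_1)=s(c_2)=y$, $r(a)=r(c_2)=w$. Let $x=t^0(\delta_u)+t^2(\delta_{(c_1,c_2)})\,t^1(\delta_a)^*$. Then $x_1\neq0$, since $x_1^*x_1=t^1(\delta_a)t^1(\delta_a)^*$. But for every $f\ge0$, $t^0(f)x_0t^0(f)=f(u)^2x_0$ and $t^0(f)x_1t^0(f)=f(u)^2x_1$. So the two requirements are incompatible for $\varepsilon<1$.

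In $\mathcal{O}_2$ (one vertex, two loops, topologically free) $t^0(f)$ is a scalar, so it can never kill $t^1(\delta_{e_1})$. Moreover, every vertex is a base point of loops of all lengths, so your choice of $v$ is impossible. Moving along an exit path to another vertex is also incompatible with compressing by a function localised where $\|x_0\|$ is nearly attained. You mention a ``path-localising element'', but it never enters your displayed conditions or your final chain of inequalities.

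\textbf{The missing idea is that element,} as in the discrete Cuntz--Krieger uniqueness theorem under Condition (L). Instead of $t^0(f)\,x\,t^0(f)$, consider $t^M(\zeta)^*\,x\,t^M(\zeta)$ with $M\ge N$ and $\zeta\in X_E^{\otimes M}$, $\|\zeta\|\le1$, supported near a single path.

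Since $t^M(\zeta)^*\,t^n(\alpha)t^n(\beta)^*\,t^M(\zeta)=t^0(\langle\xi',\eta'\rangle)$ for suitable $\xi',\eta'\in X_E^{\otimes(M-n)}$, this conjugation moves the core part into $t^0(C_0(E^0))$. There its norm is a sup-norm over terminal vertices, and it can be nearly attained by choosing where $\zeta$ lives. In the example above, $t^1(\delta_{c_1})^*\,x\,t^1(\delta_{c_1})=t^0(\delta_y)$.

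A word with $n\neq m$ survives the conjugation only if the paths in $\mathrm{supp}\,\zeta$ repeat themselves under a shift by $|n-m|\le N$ along a stretch of length at least $M-N$. Topological freeness is used precisely to choose a path that leaves short loops through exits. Local injectivity of $r$ and compactness of the supports then pass this property from the single path to a neighbourhood. The part of $x_0$ sitting over $E^0_{\rm sg}$ cannot be pushed to depth $M$ through the covariance relation, so it needs separate handling.

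With $t^M(\zeta)$ (followed, if needed, by a final $t^0(f)$) in place of your $t^0(f)$, your closing chain of inequalities goes through, because $t^M(\zeta)^*x_0\,t^M(\zeta)$ is still gauge-invariant.
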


\section{Quantum CW-complex}
The theory of CW-complexes for noncommutative spaces was first explored by Eilers-Loring-Pedersen in \cite{pedersen_stability_1998} and more recently in a generalised framework by D'Andrea-Hajac-Maszczyk-Zielinski in \cite{dandrea_quantum_2026}. We refer to the latter as the general reference for this section on quantum CW-complexes and a plethora of examples therein.
\begin{definition}(\cite[Definition 2.1]{dandrea_quantum_2026})
An \textit{unpointed Waldhausen category} is a category $\mathcal{C}$ with an initial object $\empty$ and a terminal object $\star$, with two distinguished classes of morphisms: $Cof$ of \textit{cofibrations} denoted by $\xhookrightarrow{}$ and $Weq$ of \textit{weak equivalences} denoted by $\xrightarrow[]{\sim}$ such that the following hold:
\begin{enumerate}
\item[(Iso)] Isomorphisms are both cofibrations and weak equivalences
\item[(Com)] $Cof$ and $Weq$ are closed under composition
\item[(Cof)] For any object, the unique morphism $\empty \xhookrightarrow{} X$ is a cofibration.  Furthermore, if $X \xhookrightarrow{} Y$ is a cofibration and $X \xrightarrow[]{} \tilde{X}$ is any morphism then the pushout $\tilde{X} \xhookrightarrow{} \tilde{X} \bigcup_X Y$ is also a cofibration.
\item[(Weq)] Given any commutative diagram of the form:
\[\begin{tikzcd}[ampersand replacement=\&]
	Z \& X \& Y \\
	{\tilde{Z}} \& {\tilde{X}} \& {\tilde{Y}}
	\arrow["{\sim}", from=1-1, to=2-1]
	\arrow[from=1-2, to=1-1]
	\arrow[hook, from=1-2, to=1-3]
	\arrow["{\sim}", from=1-2, to=2-2]
	\arrow["{\sim}", from=1-3, to=2-3]
	\arrow[from=2-2, to=2-1]
	\arrow[hook, from=2-2, to=2-3]
\end{tikzcd}\]
the induced maps on the pushouts $Z \bigsqcup_X Y \xrightarrow[]{} \tilde{Z} \bigsqcup_{\tilde{X}} \tilde{Y}$ is a weak equivalence.
\end{enumerate}
\end{definition}
\begin{definition}(\cite[Definition 2.2]{dandrea_quantum_2026})
An unpointed Waldhausen category is a \textit{cw-Waldhausen category} such that the following are satisfied:
\begin{enumerate}
\item[(cw-W)] 
For every pushout diagram,
    \[\begin{tikzcd}[ampersand replacement=\&]
	\& {\tilde{\tilde{Z}}} \& \\
	{\tilde{Y}} \&\& {\tilde{Z}} \\
	\& Y
	\arrow["{\tilde{j}}", from=2-1, to=1-2]
	\arrow["{\tilde{h}}"', from=2-3, to=1-2]
	\arrow["g", from=3-2, to=2-1]
	\arrow["j"', hook, from=3-2, to=2-3]
\end{tikzcd}\]
 with $j$ being a cofibration, then $\tilde{h}$ is a weak equivalence iff g is one as well.
 \item[(2/3)] For every commutative diagram of the form
 \[
 \begin{tikzcd}[ampersand replacement=\&]
	X \& Y \& Z
	\arrow[from=1-1, to=1-2]
	\arrow[curve={height=12pt}, from=1-1, to=1-3]
	\arrow[from=1-2, to=1-3]
\end{tikzcd}
\]
with any two of the morphisms being weak equivalences implies the third one sn a weak equivalence as well.
\end{enumerate}
\end{definition}
This allows one to work in the homotopy category- $Ho(\mathcal{C})$.
\begin{definition}(\cite[Definition 2.3]{dandrea_quantum_2026})
An object $X$ in a cw-Waldhausen category is weakly contractible if the unique morphism $X \xrightarrow[]{} \star$ is a weak equivalence.
\end{definition}
\begin{definition}
A \textit{cw-Waldhausen symmetric monoidal category} $\mathcal{C}$ is a category which is both a cw-Waldhausen category and a symmetric monoidal category with the terminal object as a monoidal unit together with a fully distinguished subcategory $\mathcal{C}^{\times}$ such that the following are true:
\begin{enumerate}
\item[(Con-Weq)] Any morphism $X \xrightarrow[]{} Y$ between weakly contractible objects is a weak equivalence
\item[(Cof-Mon)] For every cofibration $f : X \xhookrightarrow{} Y$ and for every object $Z$, the induced morphisms $id_Z \times f : Z \times X \xhookrightarrow{} Z \times Y$ $f \times id_Z : X \times Z \xhookrightarrow{} Y \times Z$ are cofibrations.
\item[(Sub)] For every pushout diagram of the form
    \[\begin{tikzcd}[ampersand replacement=\&]
	\& W \& \\
	X \&\& Y \\
	\& Z
	\arrow[hook, from=2-1, to=1-2]
	\arrow[from=2-3, to=1-2]
	\arrow[from=3-2, to=2-1]
	\arrow[hook, from=3-2, to=2-3]
\end{tikzcd}\]
in $\mathcal{C}$, if $X,Y,Z$ are objects in $\mathcal{C}^{\times}$, then so is $W$.
\item[(Weq-Mon)] $\mathcal{C}^{\times}$ is a monoidal subcategory of $\mathcal{C}$ such that for every weak equivalence $f : X \xrightarrow[]{\sim} Y$ and every object $Z$ in $\mathcal{C}^{\times}$, the induced morphisms $id_Z \times f : Z \times X \xrightarrow{\sim} Z \times Y$ $f \times id_Z : X \times Z \xrightarrow{\sim} Y \times Z$ are weak equivalences as well.
\end{enumerate}
\end{definition}
\begin{theorem}(Theorem 2.7 \cite{dandrea_quantum_2026})
The opposite category of $C^*$-algebras with unital $*$-homomorphisms as opposite morphisms, zero $C^*$-algebra as the initial object, $\mathbb{C}$ as the terminal object, surjective $*$-homomorphisms as cofibrations, $*$-homomorphisms inducing isomorphism on $K$-theory as weak equivalences, $\mathcal{N}_{max}$ as the distinguished subcategory and maximal tensor product is a cw-Waldhausen symmetric monoidal category denoted by $\mathcal{CpctQuant}^{\times}$.
\end{theorem}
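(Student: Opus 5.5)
We verify the axioms directly in the opposite category of unital C*-algebras, translating each one into a statement about C*-algebras. Pushouts in the opposite category are pullbacks $A\times_C B$ of unital C*-algebras. A cofibration $X\hookrightarrow Y$ corresponds to a surjection $B\to A$. The initial object $\emptyset$ is the zero algebra and the terminal object $\star$ is $\mathbb{C}$. Weak equivalences are unital $*$-homomorphisms inducing isomorphisms on $K_0$ and $K_1$.

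\textbf{Waldhausen axioms.} (Iso) and (Com) are immediate, since isomorphisms are surjective and $K$-isomorphisms, and both classes are closed under composition. For (Cof), every unital algebra surjects onto $0$. The pullback of a surjection $B\to A$ along any $C\to A$ gives a surjection $B\times_A C\to C$, since for $c\in C$ we can lift its image in $A$ to some $b\in B$.

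For (Weq), we have two pullback diagrams $A\times_C B$ with one leg surjective, mapping to each other with $K$-isomorphisms on each corner. Each pullback with a surjective leg gives a Mayer--Vietoris six-term exact sequence in $K$-theory, and these sequences are natural. The five lemma then gives a $K$-isomorphism on the pullbacks.

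\textbf{cw-Waldhausen axioms.} (2/3) holds by functoriality of $K$. For (cw-W), take a pullback $P=\tilde Y\times_Y \tilde Z$ along a surjection $j^{op}\colon \tilde Z\to Y$. The kernels satisfy $\ker(P\to\tilde Y)\cong\ker(\tilde Z\to Y)$. Comparing the two short exact sequences $0\to I\to P\to \tilde Y\to 0$ and $0\to I\to\tilde Z\to Y\to0$ via their six-term sequences and the five lemma shows that $\tilde Z\to P$ is a $K$-isomorphism if and only if $Y\to\tilde Y$ is one.

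\textbf{Monoidal axioms.} The unit $\mathbb{C}$ works for the maximal tensor product. (Cof-Mon) holds because $\otimes_{\max}$ preserves surjections: the image of $\mathrm{id}\otimes\pi$ is dense and closed.

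(Con-Weq) needs care. "Weakly contractible" means the unit map $\mathbb{C}\to A$ is a $K$-isomorphism. Then any unital $*$-homomorphism $A\to B$ between such algebras commutes with the unit maps, so it is a $K$-isomorphism by (2/3).

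For (Sub) and (Weq-Mon) we must specify $\mathcal{N}_{\max}$. I take it to be the class of unital C*-algebras $Z$ for which $Z\otimes_{\max}-$ is exact and satisfies a Künneth-type property: it preserves $K$-isomorphisms, for instance $KK$-equivalence to a commutative algebra, or the bootstrap class with nuclearity. This makes (Weq-Mon) close to tautological, or reduces it to the Künneth theorem of Rosenberg--Schochet applied to the cones of the map.

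(Sub) asks that a pullback of three algebras in $\mathcal{N}_{\max}$ along surjections lies in $\mathcal{N}_{\max}$. For this we use that $Z\otimes_{\max}(A\times_C B)\cong (Z\otimes_{\max}A)\times_{Z\otimes_{\max}C}(Z\otimes_{\max}B)$ when one leg is surjective, which follows from exactness of $\otimes_{\max}$ applied to the kernels. We then use the two-out-of-three property of the bootstrap class in extensions $0\to\ker\to P\to A\to0$.

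\textbf{Main obstacle.} The hard step is this last one: pinning down $\mathcal{N}_{\max}$ so that it is closed under pullbacks and tensor products and still guarantees (Weq-Mon). I would first try "$Z$ nuclear and in the UCT class", checking closure under extensions, and then derive (Weq-Mon) from the Künneth formula and naturality. Since this is cited as \cite[Theorem 2.7]{dandrea_quantum_2026}, the intended proof is probably to refer to that paper; the outline above is how I would reconstruct it.
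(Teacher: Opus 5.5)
The paper gives no proof of this statement. It is quoted from \cite[Theorem~2.7]{dandrea_quantum_2026}, so your outline can only be judged against the axioms themselves.

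For every axiom not involving the distinguished subcategory, your argument is correct and standard.
Pushouts are pullbacks $A\times_C B$ along a surjection. (Cof) and (Cof-Mon) hold because pullbacks and $\otimes_{\max}$ preserve surjectivity. (Weq) is naturality of the Mayer--Vietoris sequence plus the five lemma. (Con-Weq) is two-out-of-three through the unit maps $\mathbb{C}\to A$. (cw-W) compares $0\to I\to P\to\tilde Y\to 0$ with $0\to I\to\tilde Z\to Y\to 0$ over the identity on $I$. There, as $*$-homomorphisms, the comparison maps are $P\to\tilde Z$ and $\tilde Y\to Y$; you wrote them in the opposite-category direction while treating $P$ as an algebra.

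The gap is in (Sub) and (Weq-Mon). The statement fixes the distinguished subcategory to be $\mathcal{N}_{\max}$. That class is defined in the cited source, not in this paper, and your proposal never commits to it. You offer two candidates, and your (Sub) argument draws on both.

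\emph{First candidate: all unital $Z$ such that $Z\otimes_{\max}(\cdot)$ preserves $K$-equivalences.} Here (Weq-Mon) is the definition and closure under $\otimes_{\max}$ is associativity. (Sub) follows from your isomorphism $Z\otimes_{\max}(A\times_C B)\cong(Z\otimes_{\max}A)\times_{Z\otimes_{\max}C}(Z\otimes_{\max}B)$, together with your own (Weq) argument applied to the three $K$-equivalences $A\otimes_{\max}f$, $B\otimes_{\max}f$, $C\otimes_{\max}f$. No bootstrap-class input is needed.

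\emph{Second candidate: separable nuclear UCT algebras.} Here (Sub) is two-out-of-three for extensions, applied first to $0\to I\to B\to C\to 0$ and then to $0\to I\to P\to A\to 0$. (Weq-Mon) is the natural Rosenberg--Schochet K\"unneth sequence plus the short five lemma.

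Either choice yields a cw-Waldhausen symmetric monoidal category, but neither is shown to be $\mathcal{N}_{\max}$. The subscript suggests a class defined relative to the maximal tensor product. For nuclear $Z$ one has $Z\otimes_{\max}X=Z\otimes_{\min}X$ for all $X$, so with your preferred nuclear choice the ``max'' would play no role in (Sub) or (Weq-Mon).

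To prove the theorem as stated, take the actual definition of $\mathcal{N}_{\max}$ from \cite{dandrea_quantum_2026} and verify four things for that class: it contains $\mathbb{C}$, it is closed under $\otimes_{\max}$, it is closed under pullbacks along surjections, and it satisfies (Weq-Mon). If it is a K\"unneth-type class for $\otimes_{\max}$, (Weq-Mon) is again naturality plus the five lemma. Closure under pullbacks is then a genuine step: you must establish the K\"unneth property for $P$ itself, using the six-term sequences available because $\otimes_{\max}$ is exact. It does not follow merely from $P\otimes_{\max}(\cdot)$ preserving $K$-equivalences.
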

\begin{remark}
The category of compact Hausdorff spaces with embeddings as cofibrations and $K$-equivalences as weak equivalences is a cw-Waldhausen category.
\end{remark}
An object $\mathbf{S}^{d-1}$ in $\mathcal{CpctQuant}^{\times}$ is called a $K-sphere$ if it has the same $K$-theory as that of $S^{d-1}$ and an object $\mathbf{B}^d$ in $\mathcal{CpctQuant}^{\times}$ is called a $K-ball$ if it has the same $K$-theory as $B^d$. A cofibration $\partial : \mathbf{S}^{d-1} \xhookrightarrow{} \mathbf{B}^d$ is called a $K$-boundary map if it induces the short exact sequence on the $K$-groups:
\[
\begin{tikzcd}
	0 & {K^0(B^{2d})} & {K^0(S^{2d})} & 0 & 0 \\
	& {\mathbb{Z}} & {\mathbb{Z}}
	\arrow[from=1-1, to=1-2]
	\arrow[from=1-2, to=1-3]
	\arrow["\vcong"{description}, draw=none, from=1-2, to=2-2]
	\arrow[from=1-3, to=1-4]
	\arrow["\vcong"{description}, draw=none, from=1-3, to=2-3]
	\arrow[from=1-4, to=1-5]
\end{tikzcd}
\]

\[\begin{tikzcd}
	0 & {K^1(B^{2d})} & {K^1(S^{2d})} & {\mathbb{Z}} & 0 \\
	& {0} & {\mathbb{Z}}
	\arrow[from=1-1, to=1-2]
	\arrow[from=1-2, to=1-3]
	\arrow["\vcong"{description}, draw=none, from=1-2, to=2-2]
	\arrow[from=1-3, to=1-4]
	\arrow["\vcong"{description}, draw=none, from=1-3, to=2-3]
	\arrow[from=1-4, to=1-5]
\end{tikzcd}\]

and

\[\begin{tikzcd}
	0 & {K^0(B^{2d+1})} & {K^0(S^{2d+1})} & {\mathbb{Z}} & 0 \\
	& {\mathbb{Z}} & {\mathbb{Z} \bigoplus \mathbb{Z}}
	\arrow[from=1-1, to=1-2]
	\arrow[from=1-2, to=1-3]
	\arrow["\vcong"{description}, draw=none, from=1-2, to=2-2]
	\arrow[from=1-3, to=1-4]
	\arrow["\vcong"{description}, draw=none, from=1-3, to=2-3]
	\arrow[from=1-4, to=1-5]
\end{tikzcd}\]

\[\begin{tikzcd}
	0 & {K^1(B^{2d})} & {K^1(S^{2d})} & {0} & 0 \\
	& {0} & {0}
	\arrow[from=1-1, to=1-2]
	\arrow[from=1-2, to=1-3]
	\arrow["\vcong"{description}, draw=none, from=1-2, to=2-2]
	\arrow[from=1-3, to=1-4]
	\arrow["\vcong"{description}, draw=none, from=1-3, to=2-3]
	\arrow[from=1-4, to=1-5]
\end{tikzcd}\]
\begin{definition}(\cite[Definition 2.17]{dandrea_quantum_2026})
A finite quantum K-weak CW-complex is an object $X$ in the category $Ho(\mathcal{CpctQuant}^{\times})$ admitting a finite sequence of \textit{weak cofibrations} of the form

\[\begin{tikzcd}[ampersand replacement=\&]
	\& {\tilde{X^{d_{k}}}} \& \\
	{X^{d_{k-1}}} \&\& {X^{d_{k}}}
	\arrow[from=2-1, to=1-2]
	\arrow["\sim"', from=2-3, to=1-2]
\end{tikzcd}\]
such that $X^{d_0}$ is finite and the sequence is compatible with pushouts in $\mathcal{CpctQuant}^{\times}$ i.e.
\[\begin{tikzcd}
	& {\tilde{X^{d_k}}} & \\
	{X^{d_{k-1}}} && {\bigsqcup_{i=1}^{i_k} B_i^{d_k}} \\
	& {\bigsqcup_{i=1}^{i_k} S_i^{d_k}}
	\arrow[from=2-1, to=1-2]
	\arrow[from=2-3, to=1-2]
	\arrow[from=3-2, to=2-1]
	\arrow["\partial"', hook, from=3-2, to=2-3]
\end{tikzcd}\]
\end{definition}

\section{Quantum Real Projective spaces}
Classically, a real projective space $\mathbb{R}P^n$ is the topological space of lines passing through origin in $\mathbb{R}^{n+1}$. It has the topology of $\mathbb{R}^{n+1}$ with the antipodal points identified.  
Quantum real projective plane was first introduced algebraically ($\mathcal{O}(\mathbb{R}P_q^2$)) by Hajac in \cite{hajac_strong_1996} and subsequently the $C^*$-algebra was studied in detail in \cite{hajac_quantum_2003}. This was generalised to higher dimensions by \cite{hong_quantum_2002} as fixed point subalgebras of the $C^*$-algebras of quantum spheres under a $\mathbb{Z}_2$ action. As a consequence, they also have a presentation as the graph $C^*$-algebra.

The following result is due to \cite{hong_quantum_2002}. 
\begin{proposition}(\cite[Section 4.2, Proposition 5.2]{hong_quantum_2002})
The graph corresponding to the odd-dimensional quantum real projective space $C(\mathbb{R}P_q^{2n+1})$ is given by $E= (\{v_1,v_2,\dots,v_{n+1}\},\{e_{ij,jk} \ \vert \ 1\leq i \leq j \leq k \leq n+1 \}, r,s )$ where, $r(e_{ij,jk}) = v_k$ and $s(e_{ij,jk}) = v_i$, while the even-dimensional quantum real projective space $C(\mathbb{R}P_q^{2n})$ is given by $E= (\{v_1,v_2,\dots,v_{n+1}\},\{e_{ij,jk} \ \vert \ 1\leq i \leq j \leq k \leq n+1 \text{ and } i \leq n \}, r,s )$ where, $r(e_{ij,jk}) = v_k$ and $s(e_{ij,jk}) = v_i$.
\label{rpga}
\end{proposition}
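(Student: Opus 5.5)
The proposition is cited from Hong--Szymański, so the plan is to reconstruct their argument in outline. We start from the graph presentation of the odd quantum sphere. Recall that $C(S_q^{2n+1})$ is the graph $C^*$-algebra of the graph $L_{2n+1}$. This graph has vertices $w_1,\dots,w_{n+1}$, one edge $w_i\to w_j$ for every $i\le j$ (so each vertex carries a loop), and $w_{n+1}$ carries its single loop. The even sphere $C(S_q^{2n})$ corresponds to $L_{2n}$, obtained by deleting the loop at the last vertex.

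The $\mathbb{Z}_2$-action whose fixed points define $C(\mathbb{R}P_q^{m})$ is the restriction of the gauge action to $\{\pm1\}\subset\mathbb{T}$, as in \eqref{cuntzgauge}. It fixes vertex projections and sends $S_e\mapsto -S_e$. Hence the fixed-point algebra is the closed span of the elements $S_\mu S_\nu^*$ with $|\mu|-|\nu|$ even.

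The key step is to exhibit this fixed-point algebra as a graph algebra. We use the standard technique for $\mathbb{Z}_2$-gauge fixed points, namely the skew-product or ``path of length two'' graph. Away from sinks, the even part is generated by the vertex projections together with the partial isometries $S_{\alpha\beta}$ for paths $\alpha\beta$ of length two. A length-two path in $L_m$ is determined by a triple $i\le j\le k$, with $\alpha:w_i\to w_j$ and $\beta:w_j\to w_k$; this gives the labels $e_{ij,jk}$ with $s=v_i$ and $r=v_k$.

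We then check that the family $\{P_{w_i}, S_{e_{ij}}S_{e_{jk}}\}$ satisfies the Cuntz--Krieger relations of the graph $E$ in the statement. Two facts are needed.
\begin{itemize}
\item Orthogonality of the ranges is inherited from $L_m$.
\item The summation relation at $v_i$ follows by inserting $P_{w_j}=\sum_{s(f)=w_j}S_fS_f^*$ twice.
\end{itemize}
By universality this gives a map $C^*(E)\to C(S_q^m)^{\mathbb{Z}_2}$. It is gauge-equivariant (the gauge parameter $z$ corresponds to $z^2$ on the sphere side) and injective on the vertices. The gauge-invariant uniqueness theorem, or alternatively the Cuntz--Krieger uniqueness theorem above, then gives injectivity, since every loop in $E$ has an exit when $n\ge1$.

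Surjectivity is shown by rewriting each even element $S_\mu S_\nu^*$ through pairing consecutive edges. The one exception is when $|\mu|$ and $|\nu|$ are both odd. In that case we write $S_\mu S_\nu^*=\sum_f S_{\mu f}S_{\nu f}^*$ over the edges $f$ leaving $r(\mu)$, which is valid because $r(\mu)$ is not a sink. This summation fails at a sink.

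In the even case $m=2n$, the vertex $w_{n+1}$ is a sink in $L_{2n}$. Length-two paths starting at $w_{n+1}$ therefore do not exist, so no edge has source $v_{n+1}$, which matches the constraint $i\le n$. What remains is to handle the odd-odd terms ending at the sink, such as $S_\alpha S_\beta^*$ with $r(\alpha)=r(\beta)=w_{n+1}$.

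I expect this sink case to be the main obstacle. The first thing I would try is to show that these terms are already generated: each $S_\alpha$ ending at $w_{n+1}$ starts at some $w_i$ with $i\le n$, where we can expand $P_{w_i}$ backwards. Failing that, I would argue via the ideal generated by $P_{w_{n+1}}$, which is $\mathcal K$ in both algebras, and then compare the quotients, which reduce to the odd-dimensional case one step down.
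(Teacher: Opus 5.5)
The paper gives no proof beyond citing Hong--Szymański, so I compare your proposal with their argument.

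\textbf{Odd case.} Your odd-dimensional argument is essentially theirs and is fine. $L_{2n+1}$ has no sinks, the antipodal map corresponds to the gauge automorphism $\alpha_{-1}$, and the length-two paths give exactly the edges $e_{ij,jk}$, $1\le i\le j\le k\le n+1$. One correction: the Cuntz--Krieger alternative is not available. The vertex $v_{n+1}$ emits only the loop $e_{n+1\,n+1,\,n+1\,n+1}$, so that loop has no exit. You must use gauge-invariant uniqueness, with the $\mathbb{T}$-action $w\mapsto\alpha_{\sqrt{w}}$ on the fixed-point algebra; this is well defined because $\alpha_{-1}$ acts trivially there.

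\textbf{Even case: wrong model.} The even case rests on the wrong model and cannot be completed as planned. The single-sink graph you describe (delete the loop at the last vertex) is the graph of the even quantum \emph{ball} $C(B_q^{2n})$, not of $C(S_q^{2n})$. The graph of $C(S_q^{2n})$, as recalled later in the paper from Hong--Szymański, is $L_{2n-1}$ on $v_1,\dots,v_n$ together with \emph{two} sinks $w_1,w_2$, each receiving one edge $g_{i,1}$, resp.\ $g_{i,2}$, from every $v_i$. Two sinks are already forced by $K_0\cong\mathbb{Z}^2$, $K_1=0$.

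Moreover, the antipodal map negates the self-adjoint generator $z_{n+1}$, so it is not a restriction of the gauge action. It acts by $S_{e_{ij}}\mapsto -S_{e_{ij}}$ and $S_{g_{i,1}}\leftrightarrow -S_{g_{i,2}}$, that is, $\alpha_{-1}$ composed with the swap of the two sinks.

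\textbf{The claimed isomorphism fails in your model.} Take $n=1$. Your graph algebra is $\mathcal{T}$, and its $\alpha_{-1}$-fixed points form an extension of $C(S^1)$ by $\mathcal{K}\oplus\mathcal{K}$ (even and odd paths into the sink), with $K_0\cong\mathbb{Z}^2$. The graph in the statement (a loop at $v_1$ and two edges from $v_1$ to the sink $v_2$) has $K_0\cong\mathbb{Z}\oplus\mathbb{Z}/2$.

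The same defect appears combinatorially. You checked only the constraint $i\le n$, but no length-two path passes \emph{through} a sink. Hence the edges $e_{i\,n+1,\,n+1\,n+1}$, $1\le i\le n$, have no counterpart. The odd--odd terms at the sink that you flag lie genuinely outside the two-path subalgebra, and neither fallback helps:
\begin{enumerate}
\item There is no ``backward'' expansion of a vertex projection.
\item The ideal generated by the sink projection meets your fixed-point algebra in $\mathcal{K}\oplus\mathcal{K}$, not $\mathcal{K}$, so the proposed comparison of extensions does not match either.
\end{enumerate}

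\textbf{What is missing.} You need to work with the two-sink graph and the sink-swapping action. The correspondence is then:
\begin{enumerate}
\item $v_{n+1}$ corresponds to $P_{w_1}+P_{w_2}$.
\item Edges with $k\le n$ correspond to $S_{e_{ij}}S_{e_{jk}}$.
\item The $n-i+2$ edges from $v_i$ to $v_{n+1}$ correspond to the $n-i+1$ elements $S_{e_{ij}}(S_{g_{j,1}}+S_{g_{j,2}})$, $i\le j\le n$, together with the odd invariant element $S_{g_{i,1}}-S_{g_{i,2}}$ (compare the map $\beta$ later in the paper).
\end{enumerate}
Since $S_{g_{i,1}}S_{g_{j,2}}^*=0$, products such as $(S_{g_{i,1}}-S_{g_{i,2}})(S_{g_{j,1}}-S_{g_{j,2}})^*=S_{g_{i,1}}S_{g_{j,1}}^*+S_{g_{i,2}}S_{g_{j,2}}^*$ and $S_{e_{ij}}(S_{g_{j,1}}+S_{g_{j,2}})(S_{g_{l,1}}-S_{g_{l,2}})^*$ produce exactly the invariant combinations supported at the sinks. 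This is how the sink terms are generated. Injectivity again follows from gauge-invariant uniqueness.
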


Equivalently, the odd-dimensional quantum real projective spaces also admit a topological graph presentation which will be crucial in constructing a quantum CW-complex for them. 
\begin{lemma}
The topological graph corresponding to the quantum odd real projective spaces $C(\mathbb{R}P_q^{2n+1})$ given by :
$E= (\{v_1, v_2, \dots, v_{n}\} \sqcup S^1, \{e_{ij,jk} \  \vert \ 1 \leq i \leq j \leq k \leq n \} \sqcup_{1\leq i\leq n, i\leq j < n+1} X_{ij,jn+1},r,s) $ with $X_{ij,jn+1} \cong S^1$ and $r(e_{ij,jk}) = v_k, s(e_{ij,jk}) = v_i , r|_{X_{ij,jn+1}} = id_{S^1}, s(X_{ij,jn+1}) = v_i$.
\label{rp3ta}
\end{lemma}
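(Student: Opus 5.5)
We show that $C^*(E)$ for the topological graph $E$ of \cref{rp3ta} is isomorphic to the graph $C^*$-algebra $C^*(G)$ of \cref{rpga}, with $G$ the graph of $C(\mathbb{R}P_q^{2n+1})$. The idea is that in $G$ the last vertex $v_{n+1}$ carries a single loop $e_{n+1\,n+1,n+1\,n+1}$, and for $i\le j\le n$ the edges $e_{ij,j\,n+1}$ run from $v_i$ into $v_{n+1}$. The hereditary piece generated by $v_{n+1}$ and its loop is $C(\mathbb{T})$. In $E$ this loop is replaced by the circle vertex $S^1$, and each edge $e_{ij,j\,n+1}$ is thickened to a circle $X_{ij,j\,n+1}\cong S^1$ mapping homeomorphically onto it. The topological graph is therefore the ``desingularised'' version in which the unitary generated by the loop is moved into the coefficient algebra $C(S^1)$.

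\textbf{Step 1: a covariant representation.} Let $\{p_v, s_e\}$ be the Cuntz--Krieger family generating $C^*(G)$, and let $u := s_{e_{n+1\,n+1,n+1\,n+1}}$, a partial isometry with $u^*u=uu^*=p_{v_{n+1}}$. Define $\psi^0: C(\{v_1,\dots,v_n\}\sqcup S^1)\to C^*(G)$ by $\delta_{v_k}\mapsto p_{v_k}$ and, on the circle component, $f\mapsto f(u)$, the continuous functional calculus in the corner $p_{v_{n+1}}C^*(G)p_{v_{n+1}}$. Define $\psi^1$ on $X_E$ by $\delta_{e_{ij,jk}}\mapsto s_{e_{ij,jk}}$ for the discrete edges. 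For $\xi$ supported on $X_{ij,j\,n+1}\cong S^1$, set $\psi^1(\xi):= s_{e_{ij,j\,n+1}}\,\xi(u)$. We then verify:
\begin{enumerate}
\item the inner-product relation, using $s_e^*s_e = p_{r(e)}$ and, on circles, $\langle\xi,\eta\rangle = \bar\xi\eta$ since $r$ is the identity;
\item the left action relation, using $p_{s(e)}s_e=s_e$;
\item covariance on the Katsura ideal $J_{X_E}$.
\end{enumerate}
For (3) we first compute $E^0_{\rm rg}$. Every vertex receives finitely many edges as sources and none is a sink, so $J_{X_E}=C(E^0)$. At the circle vertex, covariance reduces to $f(u)=\sum \theta$-terms arising from the single circle-valued edge over it. At $v_i$, covariance is exactly the Cuntz--Krieger relation $p_{v_i}=\sum_{s(e)=v_i}s_es_e^*$ in $C^*(G)$. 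Here one uses that each $X_{ij,j\,n+1}$ contributes $s_{e}\,(\cdot)(u)\,s_e^*$, whose unit element recovers $s_es_e^*$.

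One point in (3) needs care: the circle vertex must receive an edge with range covering $S^1$. As the lemma is written, the edges at $S^1$ all have source $v_i$ with $i\le n$. So either the loop at $v_{n+1}$ is encoded as a self-homeomorphism edge, or $S^1$ is a source-free sink-like vertex. I will check which reading makes $E^0_{\rm rg}$ consistent. If $S^1\subseteq E^0_{\rm sg}$, the covariance condition is vacuous there, and $f(u)$ is simply a commutative $C(S^1)$ inside the corner, which is what we need.

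\textbf{Step 2: surjectivity and injectivity.} The universal property gives $\psi: C^*(E)\to C^*(G)$. It is surjective because the image contains every $p_v$, every $s_e$ with $e$ not the loop, and $u=\psi^0(z)$. For injectivity I would use the gauge-invariant uniqueness theorem for topological graphs. The Cuntz--Krieger theorem quoted above is not enough here, because $E$ is not topologically free when a loop without exits exists. So we need $\psi$ to intertwine a circle action and $\psi^0$ to be injective.

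Injectivity of $\psi^0$ is clear: the $p_{v_k}$ are nonzero, and $u$ has full spectrum $\mathbb{T}$ since it is the loop unitary in a gauge-invariant corner. The gauge compatibility is the main obstacle, because $u$ has gauge degree $1$ in $C^*(G)$ while $\psi^0(C(S^1))$ should have degree $0$. I would handle this by twisting: use on $C^*(G)$ the modified action that fixes $u$ and scales the other edges. Alternatively, I would use the characterisation of injectivity via \cref{idealclass}: show that $\ker\psi$ corresponds to the trivial admissible pair, since $\psi^0$ is injective and no ideal can meet $t^0$ trivially while being nonzero. This is the delicate step.
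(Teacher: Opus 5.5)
Your overall strategy (a covariant representation of $E$ into $C^*(G)$, universality, surjectivity, then a uniqueness theorem) is the paper's. Two points need correcting, and the first is a genuine error.

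\textbf{The edge set.} In $G$, the edges from $v_i$ ($i\le n$) to $v_{n+1}$ are $e_{ij,j\,n+1}$ for $i\le j\le n+1$. You follow the literal ``$j<n+1$'' of the statement and therefore omit $e_{i\,n+1,n+1\,n+1}$. With that range, both of your key claims fail.
\begin{enumerate}
\item Covariance at $v_i$ fails: $\psi^{(1)}(\varphi_{X_E}(\delta_{v_i}))=p_{v_i}-s_es_e^*$ for $e=e_{i\,n+1,n+1\,n+1}$, so it is not the Cuntz--Krieger relation.
\item Surjectivity fails: the automorphism of $C^*(G)$ that rescales only these edges fixes the image of $\psi$, so $s_{e_{i\,n+1,n+1\,n+1}}$ is not in the image.
\end{enumerate}
For $n=1$ the literal graph is exactly the paper's graph for $C(S_q^3)$, with $K_0=\mathbb{Z}$, whereas $K_0(C(\mathbb{R}P_q^3))=\mathbb{Z}\oplus\mathbb{Z}_2$. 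The paper implicitly uses $i\le j\le n+1$ throughout: in its covariance sum and surjectivity claim, in its later formula for $\delta$ (which uses $f_{i\,n+1,n+1\,n+1}$), and in the figure (two circle-edges for $n=1$). You must use that range too.

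\textbf{Sinks and topological freeness.} No edge of $E$ has its source on the circle, so every point of $S^1$ is a sink. Hence $E^0_{\rm rg}=\{v_1,\dots,v_n\}$ and $J_{X_E}=C(\{v_1,\dots,v_n\})$. Your first claim $J_{X_E}=C(E^0)$ is wrong: since $C(S^1)\subseteq\ker\varphi_{X_E}$, covariance there would force $f(u)=0$. Your fallback reading is the only one compatible with the statement.

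The same observation removes your worry about injectivity. Because the circle emits nothing, the only loops in $E$ are the $e_{ii,ii}$ with $i\le n$, and each has the exit $X_{ii,i\,n+1}$. So $E$ has no loops without exits and is topologically free. This is precisely what absorbing the exitless loop of $G$ at $v_{n+1}$ into $C(S^1)$ achieves. The quoted Cuntz--Krieger uniqueness theorem then gives injectivity as soon as $\psi^0$ is injective, which is the paper's argument.

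Your twisted route also works. By universality each edge of $G$ can be rescaled independently, so there is an action fixing the $p_v$ and $u$ and multiplying the other $s_e$ by $z$. The map $\psi$ intertwines it with the gauge action on $C^*(E)$, so Katsura's gauge-invariant uniqueness theorem applies. It is, however, an unnecessary detour. Your admissible-pair alternative is not independent of it: the classification only sees gauge-invariant ideals, so you still need that equivariance. Moreover, the claim that no nonzero ideal meets $t^0(C_0(E^0))$ trivially is exactly the content of topological freeness.
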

\begin{proof}
Denote $B = C^*(F)$, where $F$ is the directed graph corresponding to $C(\mathbb{R}P_q^{2n+1})$ as given in Proposition \ref{rpga} and let $z_{X_{ij,jn+1}}$ be the unitary generator of $X_{ij,jn+1}$. Now consider the two maps:
$\psi^0 : C_0(E^0) \xrightarrow[]{} B$ and $\psi^1 : X_E \xrightarrow[]{} B$
\begin{align*}
   \psi^1((\delta_{e_{ij,jk}})) &\mapsto S_{e_{ij,jk}} \\
   \psi^1((z_{X_{ij,jn+1}})) &\mapsto S_{e_{ij,jn+1}} \\
   \psi^0((\delta_{v_{i}})) &\mapsto P_{v_i}  \\
   \psi^0(0,z) &\mapsto S_{e_{n+1n+1,n+1n+1}} \\
   \psi^0(0,1) &\mapsto P_{v_{n+1}} \\
\end{align*}
Observe that $\langle z_{X_{ij,jn+1}}, z_{X_{ij,jn+1}} \rangle_{C_0(E^0)} = (0,1_{S^1}) \in C_0(\{v_i\}_{i=1}^{n} \sqcup S^1)$, due to which it is routine to check that the conditions (1) and (2) for $(\psi^0, \psi^1)$ to be a representation hold. Furthermore, $E^0_{rg} = \{v_1,v_2,\dots, v_n\}$, so we need to check the covariance condition at $v_i$. Firstly, $\varphi_X(\delta_{v_i}) = id_{C_0(s^{-1}(v_i)}$ so,
\begin{align*}
    \psi^{(1)}(id_{C_0(s^{-1}(v_i))}) &= \psi^{(1)}(\sum \theta_{\delta_{e_{ij,jk}},\delta_{e_{ij,jk}}} + \sum \theta_{z_{X_{ij,jn+1}},z_{X_{ij,jn+1}}}) \\
                                 &= \sum S_{e_{ij,jk}}S_{e_{ij,jk}}^* + \sum S_{e_{ij,jn+1}} S_{e_{ij,jn+1}}^* \\
                                 &= P_{v_i} \\
                                 &= \psi^0(\delta_{v_i})
\end{align*}

So we have that $(\psi^0, \psi^1)$ is a covariant representation of the given topological graph. So by universality, we have a $*$-homomorphism $\mu : C^*(E) \xrightarrow[]{} C^*(F)$.

Furthermore, the topological graph $E$ is topologically free as it has no base points of loops without exits. So $\mu$ is injective when it is restricted to $C_0(E^0)$ and the graph is topologically free, so by Cuntz-Krieger uniqueness theorem for topological graphs, we have that $\mu$ is injective everywhere. Furthermore, as all generators of the codomain are in the image of $\mu$, it is also surjective and we are done. 
\end{proof}

The author explored topological graphs and their gluings in a recent paper \cite{gothe_gluing_2026}. The $C^*$-algebras of q-deformed quantum spheres ($C(S_q^{n}))$ and Hong-Szymański balls ($C(B_q^{n})$) can be reformulated using quantum double suspension. We refer the reader to \cite{hong_quantum_2002}\cite{hong_noncommutative_2008}\cite{gothe_gluing_2026} for a more thorough discussion. Due to \cite[Section 5.2.1]{gothe_gluing_2026}, we have the following characterisation of the Vaksman-Soibelman spheres ($C(S_q^{2n+1})$) and odd-dimensional Hong-Szymański balls ($C(B_q^{2n+1})$) as topological graphs.

The quantum double suspension for a unital $C^*$-algebra $A$ was defined by Hong-Szymański in \cite[Definition 6.1]{hong_quantum_2002} as the extension of $C(S^1)$ by $A \otimes \mathcal{K}$, satisfying certain relations. However, we use the following equivalent definition, due \cite[Definition 4.1]{chakraborty_quantum_2011}.
\begin{definition}(\cite[Definition 6.1]{hong_quantum_2002},\cite[Proposition 2.1]{hong_noncommutative_2008},  \cite[Definition 4.1]{chakraborty_quantum_2011})
 Let $A$ be a unital $C^*$-algebra. Then the quantum double suspension $\Sigma^2A$ of $A$, is the $C^*$-subalgebra of $A \otimes \mathcal{T}$ generated by the elements $a \otimes p$ and $1 \otimes s$, where $s$ is the isometry generating $\mathcal{T}$ and $p = 1-ss^*$.
\end{definition}
\begin{remark}
By definition we have that $C(B_q^{2n+2}) = \Sigma^2C(B_q^n)$ and $C(S_q^{2n+2}) = \Sigma^2C(S_q^n)$ \cite{hong_noncommutative_2008}.
\end{remark}

\begin{proposition}
 The topological graph for $C(S_q^{2n+1})$ is given by $(\{v_1,\dots,v_n \} \sqcup S^1, \{e_{ij} | 1\leq i \leq j \leq n \} \sqcup_{i=1}^{n} X_{in+1} , r, s)$, with $X_{in+1} \cong S^1$ and $r\vert_{X_{in+1}} = id_{S^1}$, $r(e_{ij}) = v_j$ and $s(e_{ij}) = s(X_{in+1}) = v_i$, while for $C(B_q^{2n+1})$, it is given by the quadruple $(\{v_1,\dots,v_n \} \sqcup [-1,1], \{e_{ij} | 1\leq i \leq j \leq n \} \sqcup_{i=1}^{n} Y_{in+1} , r, s)$, with $Y_{in+1} \cong [-1,1]$ and $r\vert_{Y_{in+1}} = id_{[-1,1]}$, $r(e_{ij}) = v_j$ and $s(e_{ij}) = s(Y_{in+1}) = v_i$
\end{proposition}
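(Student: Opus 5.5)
I would follow the template of the proof of Lemma \ref{rp3ta} almost verbatim, treating the two cases in parallel. For $C(S_q^{2n+1})$ I would start from its Hong--Szymański graph presentation: the directed graph $F$ with vertices $w_1,\dots,w_{n+1}$ and one edge $f_{ij}$ from $w_i$ to $w_j$ for each $1\le i\le j\le n+1$. Write $B=C^*(F)$, with Cuntz--Krieger family $\{P_{w_i},S_{f_{ij}}\}$.

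\textbf{Sphere case.} I would define $\psi^0:C_0(E^0)\to B$ by
\[
\psi^0(\delta_{v_i})=P_{w_i},\qquad \psi^0(0,1)=P_{w_{n+1}},\qquad \psi^0(0,z)=S_{f_{n+1\,n+1}}.
\]
The last assignment is legitimate because $S_{f_{n+1\,n+1}}$ is a unitary in the corner $P_{w_{n+1}}BP_{w_{n+1}}$: the vertex $w_{n+1}$ emits exactly one edge, the loop, so $S_{f_{n+1\,n+1}}S_{f_{n+1\,n+1}}^*=P_{w_{n+1}}=S_{f_{n+1\,n+1}}^*S_{f_{n+1\,n+1}}$. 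On $X_E$ I would set $\psi^1(\delta_{e_{ij}})=S_{f_{ij}}$ and $\psi^1(g\cdot z_{X_{in+1}})=S_{f_{i\,n+1}}\,\psi^0(0,g)$ for $g\in C(S^1)$, which is forced by right $A_E$-linearity.

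Next I would check the two representation axioms. The key identity is $\langle z_{X_{in+1}},z_{X_{in+1}}\rangle=(0,1_{S^1})$, which matches $S_{f_{in+1}}^*S_{f_{in+1}}=P_{w_{n+1}}$. The left action is also compatible: $\delta_{v_i}$ acts on $X_{in+1}$ as the identity, since $s(X_{in+1})=v_i$.

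For covariance, $E^0_{rg}=\{v_1,\dots,v_n\}$, because on the circle $s^{-1}$ is empty and the circle points are sinks. Each $s^{-1}(v_i)$ is compact, consisting of finitely many points together with one circle, so $\varphi(\delta_{v_i})$ is compact. Since $r$ restricted to $X_{in+1}$ is a homeomorphism onto $S^1$, the module $C(X_{in+1})$ is singly generated by $z_{X_{in+1}}$, giving $\mathrm{id}=\theta_{z,z}$ on it. Hence
\[
\psi^{(1)}(\varphi(\delta_{v_i}))=\sum_{j\le n}S_{f_{ij}}S_{f_{ij}}^*+S_{f_{in+1}}S_{f_{in+1}}^*=P_{w_i},
\]
which is exactly the Cuntz--Krieger relation at $w_i$.

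Universality then gives $\mu:C^*(E)\to B$. This $\mu$ is injective on $C_0(E^0)$, because the projections $P_{w_i}$ are nonzero and orthogonal, and $S_{f_{n+1\,n+1}}$ has full spectrum $S^1$ in the corner, since the gauge action rotates it. The graph $E$ has no loops at all apart from paths passing through the vertices, and loops at $v_i$ have exits. So $E$ is topologically free, and the Cuntz--Krieger uniqueness theorem gives injectivity. Surjectivity holds because every generator of $B$ is in the image.

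\textbf{Ball case.} Here I cannot use a discrete graph, since odd balls admit none. Instead I would use the definition $C(B_q^{2n+1})=\Sigma^2C(B_q^{2n-1})$, with base case $C(B^1_q)=C([-1,1])$, and induct on $n$, as in \cite[Section 5.2.1]{gothe_gluing_2026}. Assuming $C^*(E_{n-1})\cong C(B_q^{2n-1})$, I would build a covariant representation of $E_n$ into $C^*(E_{n-1})\otimes\mathcal T$ as follows:
\begin{itemize}
\item the new vertex $v_1$ maps to $1\otimes p$;
\item the loop $e_{11}$ maps to $1\otimes s$;
\item each old generator $x$ maps to $x\otimes p$, and each old vertex projection $P$ maps to $P\otimes(1-p)$, with the edges $e_{1j}$ and $Y_{1n+1}$ supplied by products of the form $(1\otimes s)(\text{old vertex projection}\otimes 1)$.
\end{itemize}
Covariance at $v_1$ would reduce to the identity $ss^*+p=1$. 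Injectivity again follows from topological freeness together with faithfulness on $C_0(E^0)$, and the image is exactly the subalgebra generated by $a\otimes p$ and $1\otimes s$.

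\textbf{Main obstacle.} I expect the hardest part to be the bookkeeping in this double-suspension induction. The reindexing of vertices, with the new vertex becoming $v_1$, has to match the ranges and sources stated in the proposition. One must also verify that the image equals $\Sigma^2$ rather than something larger. For the odd sphere, the only delicate point is justifying the compactness and frame computation for the circle edges.
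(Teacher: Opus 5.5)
The paper gives no argument of its own for this proposition; it simply quotes \cite[Section 5.2.1]{gothe_gluing_2026}. Your sphere half is the same argument as the paper's proof of Lemma~\ref{rp3ta}, and it is correct. This includes the gauge-action argument that $S_{f_{n+1\,n+1}}$ has full spectrum in the corner, which is exactly what faithfulness on $C_0(E^0)$ needs.

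The ball half has a concrete error: the assignment you write down is not a representation of $E_n$.
\begin{itemize}
\item With $\psi^0(\delta_{v_1})=1\otimes p$ and $\psi^1(\delta_{e_{11}})=1\otimes s$, condition (1) forces $1\otimes s^*s=1\otimes 1$ to equal $\psi^0(\langle\delta_{e_{11}},\delta_{e_{11}}\rangle)=\psi^0(\delta_{v_1})=1\otimes p$.
\item Covariance at $v_1$ would require $1\otimes p\geq 1\otimes ss^*$, although $p\perp ss^*$.
\item The rule ``old generators $x\mapsto x\otimes p$'' contradicts ``old vertex projections $P\mapsto P\otimes(1-p)$''. With the latter, $S_e^*S_e=P_{r(e)}$ fails for the old edges.
\item The new edges $(1\otimes s)(P\otimes 1)=P\otimes s$ satisfy $\psi^1(\cdot)^*\psi^1(\cdot)=P\otimes 1$, which is not the image of any vertex.
\end{itemize}
So the covariance check you sketch cannot be carried out with these formulas.

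What works is to put the new vertex under $1-p=ss^*$ and to compress all new edges by $p$. Reindex by $v_j\mapsto v_{j+1}$, $e_{ij}\mapsto e_{i+1\,j+1}$, $Y_{in}\mapsto Y_{i+1\,n+1}$, and write $P_{v_j}=t^0(\delta_{v_j})\in C^*(E_{n-1})$. Then set
\[
\psi^0(\delta_{v_1})=1\otimes(1-p),\quad \psi^1(\delta_{e_{11}})=1\otimes s(1-p),\quad \psi^0(\delta_{v_{j+1}})=P_{v_j}\otimes p,\quad \psi^0(0,f)=t^0(0,f)\otimes p,
\]
\[
\psi^1(\xi)=t^1(\xi)\otimes p\ \ (\xi \text{ on old edges}),\quad \psi^1(\delta_{e_{1\,j+1}})=P_{v_j}\otimes sp,\quad \psi^1(g)=t^0(0,g)\otimes sp\ \ (g\in C(Y_{1\,n+1})).
\]
Conditions (1) and (2) then follow from $s^*s=1$, $ps=0$ and $ps^*sp=p$; in particular all cross terms between new and old edges vanish. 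Covariance at the old vertices is the old covariance tensored with $p$. Covariance at $v_1$ reads
\[
1\otimes s(1-p)s^*+\Big(\sum_{j=1}^{n-1}P_{v_j}+t^0(0,1)\Big)\otimes sps^*=1\otimes ss^*=\psi^0(\delta_{v_1}),
\]
using $\sum_jP_{v_j}+t^0(0,1)=1$ in $C^*(E_{n-1})$.

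The remaining steps then go through:
\begin{itemize}
\item Faithfulness on $C_0(E_n^0)$ comes from injectivity of $t^0$ for $E_{n-1}$.
\item Topological freeness holds because the loops $e_{ii}$ have exits.
\item The image is exactly $\Sigma^2C^*(E_{n-1})$. It contains every $a\otimes p$ and also $1\otimes s=1\otimes s(1-p)+1\otimes sp$. Conversely, each listed element lies in $\Sigma^2C^*(E_{n-1})$, because $1\otimes 1=(1\otimes s)^*(1\otimes s)$.
\end{itemize}
So the step you flagged as the main obstacle is routine once the formulas are right; the real defect was the formulas themselves.
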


A visualisation of the topological graphs corresponding to $C(\mathbb{R}P_q^{3})$, $C(B_q^3)$ and $C(S_q^3)$ is given below, the topological graphs corresponding to the higher dimension can be visualised analogously.
\sbox{\boxB}{
\begin{tikzpicture}[scale=0.5,tight/.style={inner sep=1cm}, loose/.style={inner sep=1cm}]
\draw [ultra thick] (0,0)coordinate(O) circle [radius=2cm] ;
\draw[->=.5] (4,0) -- node[midway,above] {2} (2,0) coordinate (X) ;
\draw[->=.5] (4,0)  [bend right=15] node[midway,above] {2}  to (-2,0) ;
\draw[->=.5] (4,0)  [bend right]  to (0,2) ;
\draw[->=.5] (4,0)  [bend left]  to (0,-2) ;
\draw[->=.5] (4,0) edge [out=120, in=60, distance=2cm,decorate] (4,0) ;
\draw[->=.5] (4,0) edge [out=120, in=60, distance=2cm] (4,0);
\draw[->=.5] (4,0) edge [tight,decorate]  node[midway,below] {2} (-1.414,-1.414);
\draw[->=.5] (4,0) edge [tight] node[midway, below] {2} (-1.414,-1.414) ;
\draw[->=.5] (4,0) edge [tight,
                    ] node[midway,below] {2} (-1.414,1.414) ;
\draw[->=.5] (4,0) edge [tight, decorate
                    ] node[midway,below] {2} (-1.414,1.414) ;

\end{tikzpicture}
}
\begin{figure}[htp]
    \centering
\usebox{\boxB}
\caption{$C(\mathbb{R}P_q^3)$}
\end{figure}
\clearpage
Here, the labelling "2" on the edges signifies that there are 2 copies of the edges i.e $S^1 \sqcup S^1$.
\sbox{\boxC}{\begin{tikzpicture}[scale=0.5,tight/.style={inner sep=1cm}, loose/.style={inner sep=1cm}]
\draw (0,-2)--(0,2)  [ultra thick] ;
\draw[->=.5] (4,0) -- (0,2) coordinate (X) node[midway,below] {};
\draw[->=.5] (4,0) -- (0,-2) coordinate (X) node[midway,below] {};
\draw[->=.5] (4,0) -- (0,1) coordinate (X) node[midway,below] {};
\draw[->=.5] (4,0) -- (0,-1) coordinate (X) node[midway,below] {};
\draw[->=.5] (4,0) edge [out=120, in=60, distance=2cm] (4,0);
\draw[->=.5] (4,0) edge [out=120, in=60, distance=2cm,decorate] (4,0);
\end{tikzpicture}}
\begin{figure}[htp]
\centering
\usebox{\boxC}
\caption{$C(B_q^3)$}
\end{figure}
\begin{figure}[!h]
    \centering
\begin{tikzpicture}[scale=0.5,tight/.style={inner sep=1cm}, loose/.style={inner sep=1cm}]
\draw [ultra thick] (0,0)coordinate(O) circle [radius=2cm] ;
\draw[->=.5] (4,0) --  (2,0) coordinate (X) ;
\draw[->=.5] (4,0)  [bend right=15]  to (-2,0) ;
\draw[->=.5] (4,0)  [bend right]  to (0,2) ;
\draw[->=.5] (4,0)  [bend left]  to (0,-2) ;
\draw[->=.5] (4,0) edge [out=120, in=60, distance=2cm,decorate] (4,0) ;
\draw[->=.5] (4,0) edge [out=120, in=60, distance=2cm] (4,0);
\draw[->=.5] (4,0) edge [tight,decorate]  (-1.414,-1.414);
\draw[->=.5] (4,0) edge [tight]  (-1.414,-1.414) ;
\draw[->=.5] (4,0) edge [tight,
                    ]  (-1.414,1.414) ;
\draw[->=.5] (4,0) edge [tight, decorate
                    ]  (-1.414,1.414) ;
\end{tikzpicture}
\caption{$C(S_q^3)$}
\end{figure}
\begin{remark}
For brevity we adopt the following convention that $f_{ij,jk}$ denotes an element in $C(X_{ij,jk})$.
\end{remark}
Consider the following commutative diagram:
    \[\begin{tikzcd}[ampersand replacement=\&]
	\& {C(\mathbb{R}P_q^{2n})} \\
	{C(\mathbb{R}P_q^{2n-1})} \&\& {C(B_q^{2n})} \\
	\& {C(S_q^{2n-1})}
	\arrow["\gamma"', two heads, from=1-2, to=2-1]
	\arrow["\delta", hook', from=1-2, to=2-3]
	\arrow["\beta"', hook', from=2-1, to=3-2]
	\arrow["\alpha", two heads, from=2-3, to=3-2]
\label{realeven}
\end{tikzcd}\]

\begin{proposition}
 The commutative diagram \ref{realeven} is a pullback diagram.
\end{proposition}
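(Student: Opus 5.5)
We plan to realise all four algebras as topological graph $C^*$-algebras and all four maps as maps induced by regular factor maps, or by quotients by gauge-invariant ideals. The pullback property will then reduce to a statement about the underlying graphs.

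\textbf{Step 1: graphs for the four corners.} Take $E_{\mathbb{R}P^{2n-1}}$ from Lemma~\ref{rp3ta} and the graph of $C(S_q^{2n-1})$ from the preceding proposition. For $C(\mathbb{R}P_q^{2n})$ and $C(B_q^{2n})$ we expect analogous topological graphs: the $S^1$ at the last vertex is replaced by a closed disc $D$. We must check that $\Sigma^2$ of the odd-dimensional topological graph gives the even one, in the spirit of \cite[Section 5.2]{gothe_gluing_2026}. Equivalently, for $C(\mathbb{R}P_q^{2n})$ we start from the discrete graph of Proposition~\ref{rpga}. There the vertex $v_{n+1}$ is a sink, so $P_{v_{n+1}}C^*(F)P_{v_{n+1}}$ generates a copy of $\mathcal{K}$ rather than $C(S^1)$.

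\textbf{Step 2: identify the maps.} The map $\gamma$ is the quotient by the gauge-invariant ideal generated by $P_{v_n}$-type projections; by Theorem~\ref{idealclass} it corresponds to an admissible pair. The map $\alpha$ is the restriction to the boundary, i.e.\ the quotient $\Sigma^2 C(B_q^{2n-1})\to C(S_q^{2n-1})$ obtained by killing the compacts at the last level. The maps $\beta$ and $\delta$ are the $\mathbb{Z}_2$-fixed-point inclusions, or equivalently maps induced via Proposition~\ref{functc*} by the regular factor maps that double the edges (the labels "2" in the figure). We first check $\beta\circ\gamma=\alpha\circ\delta$ on generators $P_{v_i}$ and $S_e$, which is a direct computation.

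\textbf{Step 3: the pullback property.} We use the standard criterion for a commutative square with $\alpha,\gamma$ surjective. It suffices to prove two things:
\begin{enumerate}
\item[(a)] $\delta$ restricts to an isomorphism $\ker\gamma\cong\ker\alpha$;
\item[(b)] the induced map $C(\mathbb{R}P_q^{2n})\to C(\mathbb{R}P_q^{2n-1})\times_{C(S_q^{2n-1})}C(B_q^{2n})$ is injective.
\end{enumerate}
Injectivity of $\delta$ plus (a) implies bijectivity onto the pullback by a diagram chase: given $(x,b)$ in the pullback, lift $x$ to $y$, so that $b-\delta(y)\in\ker\alpha=\delta(\ker\gamma)$.

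For (a), both kernels are gauge-invariant ideals generated by the vertex projections over the last sink/disc part. By Theorem~\ref{idealclass} they are described by admissible pairs, and we expect both to be isomorphic to a stabilised algebra of the form $C(\text{lower piece})\otimes\mathcal{K}$. We would verify that $\delta$ maps generators of $\ker\gamma$ onto generators of $\ker\alpha$, and use the fixed-point description to see that every element of $\ker\alpha$ is already $\mathbb{Z}_2$-invariant-compatible.

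\textbf{Main obstacle.} Surjectivity in (a) is where we expect the difficulty, since $\delta$ is only a fixed-point inclusion and is not onto. We would try to show that the kernel ideal in $C(B_q^{2n})$ is generated by $P_{v_{n+1}}$ together with the range projections of the edges into the last vertex. The images of the doubled edges under $\delta$ then produce all matrix units, and the Cuntz--Krieger uniqueness theorem controls injectivity.
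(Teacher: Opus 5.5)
Your Step 3 reduction is sound, and it would give a different, self-contained route; the paper's proof is only a citation of the covariant functoriality of graph algebras, \cite[Example 5.3]{hajac_covariant_2024}. With $\gamma,\alpha$ surjective and $\delta$ injective, Pedersen's criterion reduces everything to $\ker\alpha\subseteq\delta(\ker\gamma)$, and your (b) is automatic. The gap is in what you feed into this criterion.

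Step 1 rests on a wrong picture and is unnecessary. All four corners are already discrete graph algebras. In the graphs of $C(\mathbb{R}P_q^{2n})$ and $C(B_q^{2n})$ the extra vertex $v_{n+1}$ is a sink (a point, not a disc), so $\ker\gamma$ and $\ker\alpha$ are both the ideals generated by $P_{v_{n+1}}$, not by $P_{v_n}$.

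More seriously, your description of $\delta$ is wrong. $\beta$ is indeed the fixed-point inclusion $C(S_q^{2n-1})^{\mathbb{Z}_2}\subset C(S_q^{2n-1})$, but $\delta$ is \emph{not} a $\mathbb{Z}_2$-fixed-point inclusion. Also, neither $\beta$ nor $\delta$ is induced by a regular factor map: Proposition~\ref{functc*} only produces gauge-equivariant maps sending edges to edges, whereas both send $S_{e_{ij,jk}}$ to the length-two path $S_{e_{ij}}S_{e_{jk}}$.

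If $\delta$ were the fixed-point inclusion, your step (a) would be false, not merely hard. $\ker\alpha$ is the closed span of the matrix units $S_\mu S_\nu^*$ with $r(\mu)=r(\nu)=v_{n+1}$, and those with $|\mu|-|\nu|$ odd are not fixed by the gauge automorphism at $z=-1$. Already for $n=1$ we have $C(B_q^2)=\mathcal{T}$ and $\mathcal{T}^{\mathbb{Z}_2}\cap\mathcal{K}=\mathcal{K}(\ell^2_{\mathrm{even}})\oplus\mathcal{K}(\ell^2_{\mathrm{odd}})\neq\mathcal{K}$. Classically, $B^{2n}$ modulo the antipodal map is a cone, not $\mathbb{R}P^{2n}$. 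So your appeal to elements of $\ker\alpha$ being ``$\mathbb{Z}_2$-invariant-compatible'' cannot work.

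The missing ingredient is the actual formula for $\delta$:
$P_{v_i}\mapsto P_{v_i}$, $S_{e_{ij,jk}}\mapsto S_{e_{ij}}S_{e_{jk}}$ for $j\le n$, and $S_{e_{i\,n+1,n+1\,n+1}}\mapsto S_{e_{i\,n+1}}$.
The last one is a \emph{single} edge into the sink; compare $t^1(f_{i n+1,n+1 n+1})\mapsto t^1(f_{in+1}\circ q)$ in the odd case. The Cuntz--Krieger relations follow from $P_{v_j}=\sum_{k\ge j}S_{e_{jk}}S_{e_{jk}}^*$ for $j\le n$ in the ball graph. Injectivity follows from Cuntz--Krieger uniqueness, since the only cycles are the loops $e_{ii,ii}$, all of which have exits.

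Step (a) is then a path count, not an ideal-generation argument. Generation is not enough, because $\delta(\ker\gamma)$ is only an ideal of the image of $\delta$. Take a path $e_{i_1i_2}e_{i_2i_3}\cdots e_{i_m\,n+1}$ of the ball graph ending at $v_{n+1}$ and pair consecutive edges from the left. This gives blocks $e_{ab}e_{bc}$ with $b\le n$, each the $\delta$-image of $e_{ab,bc}$. When $m$ is odd there is a leftover $e_{i_m\,n+1}$, which is the image of $e_{i_m\,n+1,n+1\,n+1}$. Hence $\delta$ maps the matrix units of $\ker\gamma\cong\mathcal{K}$ onto those of $\ker\alpha\cong\mathcal{K}$. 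This gives $\delta(\ker\gamma)=\ker\alpha$, and with it the pullback.
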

\begin{proof}
This pullback diagram is a special case of the covariant functoriality of graph algebras proven by \cite{hajac_covariant_2024} (See Example 5.3).
\end{proof}

Now we tackle the harder issue of the pullback for odd-dimensional quantum real projective spaces. The primary difference between the odd-dimensional and the even-dimensional cases is the fact that the even-dimensional quantum balls admit a graph algebra presentation, which the odd-dimensional balls do not possess and one needs to go to the topological graphs to get a better idea of this gluing.

\begin{proposition}(\cite[Proposition 5.1]{hong_quantum_2002})
The graph of $C(S_q^{2n})$ consists of vertices $\{v_i\}_{i=1}^n \sqcup \{w_1,w_2\}$ and edges $\{e_{ij} \vert 1 \leq i \leq j  < n\} \sqcup \{g_{i,k} \vert 1 \leq i \leq n, k = 1,2\}$ with $s(e_{ij}) = s(g_{ik}) = v_i$ and $r(e_{ij}) = v_j, r(g_{ik}) = w_k$. 
\end{proposition}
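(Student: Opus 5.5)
Since this result is cited from \cite[Proposition 5.1]{hong_quantum_2002}, the plan is to reconstruct it from the quantum double suspension description $C(S_q^{2n}) = \Sigma^2 C(S_q^{2n-2})$, by induction on $n$. The base case is $n=1$ with $\Sigma^2$ applied to $C(S_q^0)=\mathbb{C}^2$. Here $\Sigma^2(\mathbb{C}^2)$ is generated by $1\otimes s$ and $e_k\otimes p$ for $k=1,2$, where $e_1,e_2$ are the minimal projections of $\mathbb{C}^2$. We check directly that $P_{v_1}:=1$, $P_{w_k}:=e_k\otimes p$, $S_{e_{11}}:=1\otimes s\cdot(\text{suitably})$ and $S_{g_{1k}}$ built from partial isometries form a Cuntz--Krieger family for the graph with one vertex $v_1$, one loop $e_{11}$, and two edges $g_{11},g_{12}$ to the sinks $w_1,w_2$. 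Concretely, the natural choice is to shift vertex projections so that $P_{v_1}=1-\sum_k P_{w_k}$, take $S_{e_{11}}=(1\otimes s)P_{v_1}$-type partial isometry, and $S_{g_{1k}} = (e_k\otimes s)$ restricted appropriately. These satisfy $S^*S=P_{r}$ and $P_{v_1}=S_{e_{11}}S_{e_{11}}^*+\sum_k S_{g_{1k}}S_{g_{1k}}^*$. Injectivity follows from the (discrete) Cuntz--Krieger uniqueness theorem, a special case of the topological version stated above. The graph satisfies condition (L), since the only loop $e_{11}$ has exits, and all vertex projections are nonzero. Surjectivity holds because $1\otimes s$ and $e_k\otimes p$ are recovered as sums of the $S$'s and $P$'s.

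For the inductive step, assume $C(S_q^{2n-2})\cong C^*(G_{n-1})$, where $G_{n-1}$ is the stated graph. The key structural fact is a graph-theoretic description of $\Sigma^2$: if $C(A)=C^*(G)$ for a finite graph $G$, then $\Sigma^2 C^*(G)\cong C^*(G')$. The graph $G'$ is obtained by adding one new vertex $v_0$ with a loop at $v_0$ and one edge from $v_0$ to every vertex of $G$. This is the same mechanism by which Hong--Szymański obtain the graphs of Proposition~\ref{rpga}. To prove it, send the generators of $C^*(G')$ to the following elements of $C^*(G)\otimes\mathcal{T}$:
\begin{itemize}
\item $S_e\mapsto S_e\otimes p$ and $P_v\mapsto P_v\otimes p$ for old edges and vertices;
\item the loop to $1\otimes s(1-p)$, or the corrected version $1\otimes (s - \text{its } p\text{-part})$;
\item the edge $v_0\to v$ to $P_v\otimes sp$;
\item $P_{v_0}\mapsto 1\otimes ss^*$.
\end{itemize}
The Cuntz--Krieger relation at $v_0$ then reads $1\otimes ss^* = (1\otimes s(1-p)s^*) + \sum_v P_v\otimes sps^*$, which holds because $\sum_v P_v=1$ in the unital graph algebra $C^*(G)$.

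Applying this to $G_{n-1}$, relabel the new vertex as $v_1$ and shift the old $v_i$ to $v_{i+1}$. Edges from $v_1$ to every $v_j$ together with the loop give the $e_{1j}$, and the edges from $v_1$ to $w_1,w_2$ give $g_{1k}$. This yields exactly the graph in the statement for $n$.

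The main obstacle is nailing down the partial isometries so that the relations hold exactly, and then proving injectivity. For injectivity I would use Cuntz--Krieger uniqueness: every loop in $G'$ has an exit, since $v_0$'s loop exits into $G$, and the loops of $G$ already had exits. It therefore suffices that each $P_v\otimes p\neq 0$ and $1\otimes ss^*\neq0$, which is clear. Surjectivity onto $\Sigma^2 C^*(G)$ holds because $1\otimes s$ equals the loop image plus $\sum_v P_v\otimes sp$, and $a\otimes p$ lies in the image for all generators $a$ of $C^*(G)$.
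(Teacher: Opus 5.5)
Your argument is correct, but it does not follow the paper. The paper gives no proof; it simply imports \cite[Proposition 5.1]{hong_quantum_2002}, where the graph is obtained directly from the defining presentation of $C(S^{2n}_q)$. You instead iterate a general lemma: $\Sigma^2 C^*(G)\cong C^*(G')$, with $G'$ obtained by adjoining a vertex $v_0$ carrying a loop and one edge to every vertex of $G$. Your formulas check out:
\begin{itemize}
\item $P_v\otimes p$, $1\otimes ss^*$, $S_e\otimes p$, $P_v\otimes sp$ and $1\otimes s^2s^*$ form a Cuntz--Krieger family; the relation at $v_0$ is $ss^*=s^2s^{*2}+sps^*$ tensored with $1=\sum_v P_v$.
\item Condition (L) passes from $G$ to $G'$, because the only edge entering $v_0$ is its loop.
\item $1\otimes s=1\otimes s^2s^*+\sum_v P_v\otimes sp$ gives surjectivity.
\end{itemize}
This route gives a uniform, elementary proof that explains the triangular shape of the graph and applies verbatim to odd spheres. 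The price is that it presupposes $C(S^{2n}_q)\cong\Sigma^2 C(S^{2n-2}_q)$. That is legitimate under the convention of the Remark in this paper, where the $\Sigma^2$-description is taken as the definition. However, if $C(S^{2n}_q)$ is defined by generators and relations, Hong--Szymański derive the suspension identity only afterwards, from the graph presentations themselves. Relative to that definition, you would need an independent proof of the suspension identity to avoid circularity.

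Three smaller points. First, your separate base case is sloppy (you first set $P_{v_1}=1$) and unnecessary: apply your lemma to the graph $G_0$ with two vertices and no edges, since $C^*(G_0)=\mathbb{C}^2=C(S^0_q)$. Second, as stated for an arbitrary finite $G$, your lemma would need the gauge-invariant uniqueness theorem instead of condition (L). For the sphere graphs this is harmless, since (L) holds there. Third, your construction yields $e_{ij}$ for $1\le i\le j\le n$, i.e.\ a loop at every $v_i$. This is the correct Hong--Szymański graph, and the bound $j<n$ printed in the statement is evidently a misprint: read literally, for $n=1$ it gives the finite-dimensional algebra $M_2(\mathbb{C})\oplus M_2(\mathbb{C})$. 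So your claim to recover ``exactly the graph in the statement'' holds only for the corrected version.
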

\begin{proposition}
There is a exact sequence of $C^*$-algebras:
\[\begin{tikzcd}
	0 & {\mathcal{K}\otimes C_0(-1,1)} & {C(B_q^{2n+1})} & {C(S_q^{2n})} & 0
	\arrow[from=1-1, to=1-2]
	\arrow[hook, from=1-2, to=1-3]
	\arrow[two heads, from=1-3, to=1-4]
	\arrow[from=1-4, to=1-5]
\end{tikzcd}\]
More explicitly, $ker\alpha = \langle t^0(f) \vert f \in C_0(-1,1) \rangle$.
\label{hsker}
\end{proposition}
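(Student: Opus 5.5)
We realise the sequence inside the topological graph picture of $C(B_q^{2n+1})$ from the preceding Proposition, and use the gauge-invariant ideal classification, Theorem~\ref{idealclass}. Write $E$ for the graph of $C(B_q^{2n+1})$: vertex space $\{v_1,\dots,v_n\}\sqcup[-1,1]$, edges $e_{ij}$ and $Y_{in+1}\cong[-1,1]$ with $r|_{Y_{in+1}}=\mathrm{id}$. The candidate ideal is $I=\langle t^0(f)\mid f\in C_0(-1,1)\rangle$, regarding $C_0(-1,1)\subset C_0(E^0)$ as functions supported on the open interval of the vertex space. Since $t^0(f)$ is fixed by the gauge action, $I$ is gauge-invariant.

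\textbf{Identifying the quotient.} The admissible pair of $I$ is obtained as follows. The set $Y^0=\{v_1,\dots,v_n\}\sqcup\{-1,1\}$ is closed. It is positively invariant, because no edge has range in $(-1,1)$ and source outside $Y^0$; the only edges with range in the interval start at the $v_i$, which lie in $Y^0$. It is negatively invariant, because each regular $v_i$ emits the edges $e_{ii}$, whose range lies in $Y^0$. The points $\pm1$ are sinks, so $Y^0_{\rm sg}=\{\pm1\}=E^0_{\rm sg}\cap Y^0$, and hence $Z^0$ is forced. By Katsura's results the quotient $C^*(E)/I_\rho$ is the algebra of the restricted graph with vertices $\{v_i\}\sqcup\{w_1,w_2\}$, where $w_1=-1$ and $w_2=1$. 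Its edges are $e_{ij}$ together with the two endpoint edges of each $Y_{in+1}$, which we rename $g_{i,1}$ and $g_{i,2}$. This is precisely the graph of $C(S_q^{2n})$ from the Proposition above (after matching indices: the $e_{ij}$ for $j\le n$ together with the two sinks).

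Concretely, the quotient map is the restriction map $\alpha$, which is surjective. We then check $I_\rho=I$. One inclusion holds because $t^0(f)\in I_\rho$ whenever $f$ vanishes on $Y^0$. For the other, $I$ is gauge-invariant with $Y^0_I=Y^0$ and $Z^0_I=Z^0$, so Theorem~\ref{idealclass} gives equality. Alternatively, one can verify $\ker\alpha=I$ directly using Cuntz–Krieger uniqueness on the quotient graph, which is a discrete graph in which every loop has an exit.

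\textbf{Identifying the ideal.} Next we show $I\cong\mathcal K\otimes C_0(-1,1)$, which we expect to be the main obstacle. The ideal generated by $C_0(U)$, with $U=(-1,1)$ hereditary, is Morita equivalent to $C_0(U)$ itself, because no edge leaves $U$. To get the stable isomorphism explicitly, write $I$ as the closed span of $t^1(\xi)t^0(f)t^1(\eta)^*$ over finite paths $\xi,\eta$ ending in $U$. Each such path has the form (a path $v_{i_1}\to\cdots\to v_{i_k}$ through the $e$'s) followed by one $Y_{in+1}$ edge. Because $r|_{Y_{in+1}}$ is a homeomorphism onto $[-1,1]$, the elements $t^1(\mu\,\chi_{Y})t^0(f)$ act as matrix units tensored with $f$. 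Indexing these paths by a countably infinite set $P$ (there are loops $e_{ii}$), we obtain $I\cong C_0(-1,1)\otimes\mathcal K(\ell^2P)$. The technical care lies in verifying that the partial isometries $t^1(\mu)$ restricted over $U$ have orthogonal ranges, which follows from the Cuntz–Krieger relations at the $v_i$ and the fact that $U$ contains no regular vertices. Combined with surjectivity of $\alpha$, this gives the exact sequence.
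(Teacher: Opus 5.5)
Your argument is correct. For $\ker\alpha=\langle t^0(f)\mid f\in C_0(-1,1)\rangle$ it uses the same idea as the paper: compute the admissible pair $(\{v_1,\dots,v_n\}\sqcup\{-1,1\},\{-1,1\})$ and conclude with Theorem~\ref{idealclass}.

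\textbf{Supporting steps.} These differ. The paper reads off the pair of $\ker\alpha$ from the regular factor map via Proposition~4.1 of Gothe--Quigg--Tobolski, and the pair of $I$ from Katsura's result on the hereditary saturated set $V=(-1,1)$. You instead get $\ker\alpha=I_\rho$ from Katsura's quotient theorem $C^*(E)/I_\rho\cong C^*(E_\rho)$, where $E_\rho$ is the restricted graph because $Z^0=Y^0_{\rm sg}$; this also gives surjectivity of $\alpha$. You then get $\rho_I=\rho$ from $t^0(C_0(-1,1))\subseteq I\subseteq I_\rho$. Say explicitly that $Y^0_I\supseteq Y^0_{I_\rho}=Y^0$ comes from $I\subseteq I_\rho$, and that $Z^0_I$ is then forced.

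\textbf{What your route adds.} The paper's proof only identifies generators of $\ker\alpha$ and never justifies $\ker\alpha\cong\mathcal K\otimes C_0(-1,1)$; your matrix-unit construction supplies this. When you write it up, tighten three points.
\begin{enumerate}
\item The relations $w_\lambda^*w_{\lambda'}=\delta_{\lambda\lambda'}\,t^0(\chi_{[-1,1]})$ come from $t^1(\xi)^*t^1(\eta)=t^0(\langle\xi,\eta\rangle)$, since distinct edges and distinct components $Y_{in+1}$ have zero inner product. They do not come from the Cuntz--Krieger relations.
\item The property of the interval you actually use is that no edge has its source there. This gives $t^0(f)t^1(\zeta)=0$ for $f$ supported in $[-1,1]$. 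Consequently the closed span of the $w_\lambda t^0(f)w_{\lambda'}^*$ is already an ideal, and the trivial path, which must be included in $P$, is orthogonal to the others.
\item Injectivity of $C_0(-1,1)\otimes\mathcal K(\ell^2P)\to I$ follows from injectivity of $t^0$, because every nonzero ideal of $C_0(-1,1)\otimes\mathcal K$ meets some corner $C_0(-1,1)\otimes e_{\lambda\lambda}$.
\end{enumerate}
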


\begin{proof}
The $*$-homomorphism $\alpha$ is induced by the factor map $m^0 : \{v_i\} \sqcup \{-1,1\} \xrightarrow[]{} \{v_i\} \sqcup [-1,1]$ and $m^1 : \{e_{ij,jk} \} \sqcup \{-1,1\}_{in+1} \xrightarrow[]{} \{e_{ij,jk}\} \sqcup Y_{in+1}$. Note that here, we rewrite $\{w_1,w_2\}$ as $\{-1,1\}$ and $g_{i,k}$ as $\{-1,1\}_{in+1}$ as to emphasise the embedding of the final vertices and edges inside the closed interval. So by \cite[Proposition 4.1]{gothe_gluing_2026} we have that the admissible pair corresponding to the (gauge-invariant) ideal $ker\alpha$ is $\{v_i\} \sqcup \{-1,1\}, \{-1,1\}$.

Consider the subset $V = (-1,1) \subseteq E^0$. This is a hereditary and saturated subset of $E^0$ (in the sense of Definition \ref{hss}), so we have that $S(H(V)) = V$. By \cite[Proposition 6.1]{katsura_class_2006-1}, we have that for the ideal generated by $t^0(C_0(V))$ is gauge-invariant, and the admissible pair is given by $(E^0 \backslash S(H(V)) , E^0_{sg}\backslash S(H(V))$, which in this case gives the admissible pair $(\{v_1,v_2,\dots,v_n\} \sqcup \{-1,1\} , \{-1,1\}$. So by the bijective correspondence between gauge-invariant ideals and admissible pairs, we have that $ker\alpha = \langle t^0(f) | f\in C_0(-1,1) \rangle$.
\end{proof}

\begin{proposition}(\cite[Proposition 5.2]{hong_quantum_2002})
The map $\beta: C(\mathbb{R}P_q^{2n}) \xhookrightarrow{} C(S_q^{2n})$ is given by:
\begin{align*}
    P_{v_i} &\mapsto P_{v_i} \hspace{2cm} i < n+1 \\
    P_{v_{n+1}} &\mapsto P_{w_1} + P_{w_2} \\
    S_{e_{ij,jk}} &\mapsto S_{e_{ij}}S_{e_{jk}} \hspace{2cm} k \neq n+1 \\
    S_{e_{ij,jn+1}} &\mapsto S_{e_{i,i+k-1}} (S_{g_{i+k-1,1}} + S_{g_{i+k-1,2}}) \\
    S_{e_{ii,in+1}} &\mapsto S_{g_{i,1}} - S_{g_{i,-1}}
\end{align*}
\end{proposition}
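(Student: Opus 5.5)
The plan is to read the displayed assignment as defining a Cuntz--Krieger family in $C(S_q^{2n})$ indexed by the graph of $C(\mathbb{R}P_q^{2n})$ from Proposition~\ref{rpga}. The universal property of graph $C^*$-algebras then gives a $*$-homomorphism $\beta$. Injectivity will follow from the gauge-invariant uniqueness theorem, and I will identify the image with the $\mathbb{Z}_2$-fixed point subalgebra. Concretely, write $Q_{v_i}=P_{v_i}$ for $i\le n$ and $Q_{v_{n+1}}=P_{w_1}+P_{w_2}$. For the edges write $T_{e_{ij,jk}}=S_{e_{ij}}S_{e_{jk}}$ for $k\le n$. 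For the edges ending at $v_{n+1}$, take the two combinations $S_{e_{ij}}(S_{g_{j,1}}\pm S_{g_{j,2}})/\sqrt{2}$; these are the two copies of each edge $e_{ij,jn+1}$, with the sign choice for $i=j$ recorded in the last line of the statement. I will insert the normalisation $1/\sqrt 2$ so that the source projections come out right.

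First I check the Cuntz--Krieger relations. The $Q$'s are mutually orthogonal projections. Each $T$ is a partial isometry: for $k\le n$, $T^*T=S_{e_{jk}}^*P_{v_j}S_{e_{jk}}=P_{v_k}$. For the remaining edges,
\[
T^*T=\tfrac12 (S_{g_{j,1}}\pm S_{g_{j,2}})^*(S_{g_{j,1}}\pm S_{g_{j,2}})=\tfrac12(P_{w_1}+P_{w_2})\cdot 2/2\cdot 2=Q_{v_{n+1}}
\]
after normalisation, and the cross terms vanish because $S_{g_{j,1}}^*S_{g_{j,2}}=0$. Next comes the sum relation at each regular vertex $v_i$ with $i\le n$. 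Here I expand $\sum_{j,k}T_{e_{ij,jk}}T_{e_{ij,jk}}^*$ and group the terms by $j$. For fixed $j$, the sum over $k\le n$ together with the two $\pm$ copies gives $S_{e_{ij}}\big(\sum_k S_{e_{jk}}S_{e_{jk}}^*+S_{g_{j,1}}S_{g_{j,1}}^*+S_{g_{j,2}}S_{g_{j,2}}^*\big)S_{e_{ij}}^*=S_{e_{ij}}P_{v_j}S_{e_{ij}}^*=S_{e_{ij}}S_{e_{ij}}^*$, using the relation at $v_j$ in $C(S_q^{2n})$. Summing over $j$ then gives $P_{v_i}$, since the edges out of $v_i$ in the sphere graph are the $e_{ij}$ and the $g_{i,k}$. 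The $j=i$ terms need care here, because the loop $e_{ii}$ and the $g_{i,k}$ appear directly; this is where the special line $S_{g_{i,1}}-S_{g_{i,2}}$ enters. Vertex $v_{n+1}$ is a sink in the $\mathbb{R}P_q^{2n}$ graph, since $i\le n$, so no relation is needed there.

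For injectivity I equip $C(S_q^{2n})$ with its gauge action $\lambda$. The image family is not homogeneous of degree one, since each $T$ has degree $2$ and the $g$-edges of degree one appear inside degree-two products. So I use the action $z\mapsto\lambda_{\sqrt z}$, which is well defined on the even part. Under it $\beta$ is equivariant, and every $Q_v$ is nonzero, so the gauge-invariant uniqueness theorem gives that $\beta$ is injective. Finally, the image lies in the fixed points of the $\mathbb{Z}_2$ action $\lambda_{-1}$; I would check that it generates that fixed point algebra, which recovers the identification of \cite{hong_quantum_2002}.

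The main obstacle is making the index and sign conventions in the displayed formulas consistent. The fourth line uses an index $k$ that does not appear on the left, and the last line writes $g_{i,-1}$. Normalisation constants are also needed, and the two copies of $e_{ij,jn+1}$ must be orthogonal; the $\pm$ combinations give exactly this. I would first fix these conventions using the case $n=1$, that is $\mathbb{R}P_q^2\subset S_q^2$, verify the relations there by hand, and then run the general grouping argument above.
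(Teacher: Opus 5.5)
The paper gives no proof of this statement; it is quoted from Hong--Szymański. So I judge your plan on its own terms. The architecture is fine: universal property, a uniqueness theorem, then identifying the image with the fixed points. But the family you write down is not a Cuntz--Krieger family for the graph of Proposition~\ref{rpga}, so the argument fails at its first step.

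That graph has exactly one edge $e_{ij,jn+1}$ for each $j=i,\dots,n+1$, giving $n-i+2$ edges from $v_i$ to $v_{n+1}$. There are no ``two copies''. You instead assign $2(n-i+1)$ elements $S_{e_{ij}}(S_{g_{j,1}}\pm S_{g_{j,2}})/\sqrt2$ with $i\le j\le n$, and these also violate the relations:
\begin{itemize}
\item Since $r(g_{j,1})\neq r(g_{j,2})$, all cross terms vanish, so $(S_{g_{j,1}}\pm S_{g_{j,2}})^*(S_{g_{j,1}}\pm S_{g_{j,2}})=P_{w_1}+P_{w_2}$ with no normalisation. Your factor $1/\sqrt2$ therefore gives $T^*T=\tfrac12 Q_{v_{n+1}}$; the step ``$\cdot 2/2\cdot 2$'' is not a computation.
\item The $+$ and $-$ elements have the same range, namely $S_{e_{ij}}(S_{g_{j,1}}S_{g_{j,1}}^*+S_{g_{j,2}}S_{g_{j,2}}^*)S_{e_{ij}}^*$ up to the scalar. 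So they are not orthogonal, and without the scalar the sum at $v_i$ double counts.
\end{itemize}

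The intended reading of the statement takes $g_{i,-1}=g_{i,2}$ and $i+k-1=j-1$. It is $e_{ii,in+1}\mapsto S_{g_{i,1}}-S_{g_{i,2}}$, a \emph{length-one} element with no $S_{e_{ii}}$ in front, together with $e_{ij,jn+1}\mapsto S_{e_{i,j-1}}(S_{g_{j-1,1}}+S_{g_{j-1,2}})$ for $i<j\le n+1$, unnormalised. The elements $S_{e_{ij}}(S_{g_{j,1}}-S_{g_{j,2}})$ never occur. With this assignment $T^*T=Q_{v_{n+1}}$ holds directly. Your grouping by $j$, with this index shift, gives the sum at $v_i$ as $\sum_{j=i}^{n}S_{e_{ij}}P_{v_j}S_{e_{ij}}^*+S_{g_{i,1}}S_{g_{i,1}}^*+S_{g_{i,2}}S_{g_{i,2}}^*=P_{v_i}$.

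The mixed lengths also break your injectivity and fixed-point steps. The image $S_{g_{i,1}}-S_{g_{i,2}}$ has gauge degree one while the others have degree two. Hence $\beta$ is not equivariant for $z\mapsto\lambda_{\sqrt z}$, and no rescaling of the gauge action repairs this. For the same reason the image does not lie in the fixed points of $\lambda_{-1}$.

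The involution that fits these formulas is $\lambda_{-1}$ composed with the graph automorphism exchanging $w_1\leftrightarrow w_2$ and $g_{i,1}\leftrightarrow g_{i,2}$. It fixes $S_{g_{i,1}}-S_{g_{i,2}}$ and $S_{e_{ij}}(S_{g_{j,1}}+S_{g_{j,2}})$, and negates $S_{e_{ij}}(S_{g_{j,1}}-S_{g_{j,2}})$.

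For injectivity, use the Cuntz--Krieger uniqueness theorem instead of gauge-invariant uniqueness. The only cycles in the graph are the loops $e_{ii,ii}$ for $i\le n$, each with an exit such as $e_{ii,in+1}$, and $v_{n+1}$ is a sink. So Condition (L) holds, and $Q_v\neq0$ for every $v$ suffices. Showing that the image is the whole fixed-point algebra is then a separate spanning argument, which your sketch only gestures at.
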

\begin{theorem}
The following commutative diagram is a pullback diagram, yielding the quantum CW-complex structure for quantum real projective spaces $\mathbb{C} \twoheadleftarrow{} C(\mathbb{R}P_q^1) \twoheadleftarrow{} C(\mathbb{R}P_q^2) \twoheadleftarrow{} \dots \twoheadleftarrow{} C(\mathbb{R}P_q^n)$
    \[\begin{tikzcd}[ampersand replacement=\&]
	\& {C(\mathbb{R}P_q^{2n+1})} \\
	{C(\mathbb{R}P_q^{2n})} \&\& {C(B_q^{2n+1})} \\
	\& {C(S_q^{2n})}
	\arrow["\gamma"', two heads, from=1-2, to=2-1]
	\arrow["\delta", hook', from=1-2, to=2-3]
	\arrow["\beta"', hook', from=2-1, to=3-2]
	\arrow["\alpha", two heads, from=2-3, to=3-2]
\end{tikzcd}\]
The $*$-homomorphisms $\gamma$ and $\delta$ are given in the following lemmas.
\end{theorem}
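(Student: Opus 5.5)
The argument has three stages. First write down $\gamma$ and $\delta$ explicitly on generators. Then check that the square commutes. Finally apply the standard pullback criterion: a commutative square of $C^*$-algebras with $\alpha$ surjective is a pullback if and only if $\delta$ restricts to an isomorphism $\ker\gamma\cong\ker\alpha$ and the induced map $C(\mathbb{R}P_q^{2n+1})\to C(\mathbb{R}P_q^{2n})\oplus_{C(S_q^{2n})}C(B_q^{2n+1})$ is injective and surjective. Throughout I work with the topological graph $E$ of Lemma \ref{rp3ta} for $C(\mathbb{R}P_q^{2n+1})$, the graph $F$ of Proposition \ref{rpga} for $C(\mathbb{R}P_q^{2n})$, and the topological graph $G$ of $C(B_q^{2n+1})$ from the preceding proposition.

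\textbf{Defining $\gamma$.} I would realise $\gamma$ as the map induced by a regular factor map $F\to E$ via Proposition \ref{functc*}. On vertices it is the identity on $v_1,\dots,v_n$, and it sends $v_{n+1}$ to the point $1\in S^1$. On edges it sends $e_{ij,jk}\mapsto e_{ij,jk}$ for $k\le n$, $e_{ij,jn+1}\mapsto 1\in X_{ij,jn+1}$, and drops the edge $e_{n+1n+1,n+1n+1}$. This matches the fact that in the even case no edge has source $v_{n+1}$. I would then verify (F1)--(F3); in particular $v_{n+1}$ is a sink, hence singular, and $1\in S^1$ is singular in $E$. This gives a surjective gauge-equivariant map, whose kernel, by \cite[Proposition 4.1]{gothe_gluing_2026} and Theorem \ref{idealclass}, is generated by $t^0(C_0(S^1\setminus\{1\}))$.

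\textbf{Defining $\delta$.} I would build $\delta$ as a covariant representation of $E$ in $C(B_q^{2n+1})$, mimicking $\beta$ but with $S^1$ replaced by $[-1,1]$. The correct picture is that $S^1$ double covers $[-1,1]$ via $z\mapsto \mathrm{Re}\,z$, just as the antipodal quotient $S^{2n}\to\mathbb{R}P^{2n}$ glues onto the boundary of the ball. Concretely, I would send $f\in C(S^1)$ to the appropriate matrix-type combination of $t^0$ of functions on $[-1,1]$ along the edges $Y_{in+1}$, and compose path edges as in $\beta$. I would check the representation relations and the covariance at the regular vertices $v_1,\dots,v_n$ by the same Cuntz--Krieger sum computation as in Lemma \ref{rp3ta}. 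Injectivity of $\delta$ then follows from the Cuntz--Krieger uniqueness theorem, since $E$ is topologically free and $\delta$ is injective on $C(E^0)$. Commutativity $\alpha\delta=\beta\gamma$ only needs to be checked on generators $t^0,t^1$, where it reduces to evaluating at the endpoints $\pm1$ and comparing with the formulas for $\beta$.

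\textbf{Pullback property.} By Proposition \ref{hsker}, $\ker\alpha$ is generated by $t^0(C_0(-1,1))$ and is isomorphic to $\mathcal{K}\otimes C_0(-1,1)$. Likewise $\ker\gamma\cong\mathcal{K}\otimes C_0(S^1\setminus\{1\})$. Under $\delta$, the generators $t^0(C_0(S^1\setminus\{1\}))$ should map onto generators of $\ker\alpha$, and I expect the two ideals to match via a gauge-equivariant isomorphism of the associated admissible pairs. Injectivity of the induced map into the pullback is immediate from injectivity of $\delta$. For surjectivity, take $(a,b)$ with $\beta(a)=\alpha(b)$, lift $a$ through $\gamma$ to $c$, and observe that $b-\delta(c)\in\ker\alpha=\delta(\ker\gamma)$.

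The main obstacle is the exact formula for $\delta$ on $C(S^1)$ and on the edge spaces $X_{ij,jn+1}$. I would first try $t^0(z)\mapsto$ an element built from $t^0(x)$ and the partial isometries along $Y_{in+1}$, chosen so that restriction to $\{\pm1\}$ reproduces $\beta$, namely $P_{w_1}+P_{w_2}$ and $S_{g_{i,1}}\pm S_{g_{i,2}}$. The delicate step is then to verify $\delta(\ker\gamma)=\ker\alpha$, rather than merely an inclusion.
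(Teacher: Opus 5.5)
Your skeleton matches the paper's proof. You obtain $\gamma$ from the regular factor map sending $v_{n+1}$ and the edges $e_{ij,jn+1}$ to the point $1$, with $\ker\gamma$ generated by $t^0(C_0(S^1\setminus\{1\}))$. You get injectivity of $\delta$ from Cuntz--Krieger uniqueness. You then apply Pedersen's criterion, and your lifting argument for it is fine. The gap is that $\delta$ is never defined, although both commutativity and $\delta(\ker\gamma)=\ker\alpha$ depend on it, and the picture you propose for it cannot work. All points of $[-1,1]$ are sinks in the graph of $C(B_q^{2n+1})$, so the corner $t^0(1_{[-1,1]})\,C(B_q^{2n+1})\,t^0(1_{[-1,1]})$ is just $t^0(C[-1,1])$. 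There is no room there for partial isometries along $Y_{in+1}$ or for ``matrix-type combinations''. Take $\delta(t^0(1_{S^1}))=t^0(1_{[-1,1]})$, as $\beta\gamma(t^0(1_{S^1}))=P_{w_1}+P_{w_2}=\alpha(t^0(1_{[-1,1]}))$ suggests. Then $\delta$ restricts to a unital embedding $C(S^1)\hookrightarrow C[-1,1]$, i.e.\ $f\mapsto f\circ q$ for a continuous surjection $q\colon[-1,1]\to S^1$, and commutativity forces $q(\pm1)=1$. A map induced by $z\mapsto\mathrm{Re}\,z$ goes the wrong way. You have conflated the attaching map, which is the double cover dual to $\beta$ (on the top stratum it is $\mathbb{C}\to\mathbb{C}^2$), with the characteristic map of the cell, which is dual to $\delta$. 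The case $n=0$ already shows this: there the pullback is $\{g\in C[-1,1]: g(-1)=g(1)\}\cong C(S^1)$. The paper takes exactly this $q$, identifying the endpoints. It sets $t^0(0,f)\mapsto t^0(0,f\circ q)$ and $t^1(f_{ij,jn+1})\mapsto t^1(\delta_{e_{ij}})\,t^1(f\circ q)$, with $f\circ q$ on $Y_{jn+1}$, for $j\le n$. The remaining edge $X_{in+1,n+1n+1}$ goes to $t^1(f\circ q)$ on $Y_{in+1}$.

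This choice is also what settles the step you leave as an expectation. Admissible pairs cannot compare $\delta(\ker\gamma)$ with $\ker\alpha$: they classify gauge-invariant ideals of a single algebra, and $\delta$ is neither induced by a factor map nor gauge-equivariant, since it sends some edges to paths of length two. What you need is $\ker\alpha\subseteq\delta(C(\mathbb{R}P_q^{2n+1}))$, because $\delta^{-1}(\ker\alpha)=\ker\gamma$ already follows from $\alpha\delta=\beta\gamma$ and the injectivity of $\beta$. Since $q$ restricts to a homeomorphism $(-1,1)\to S^1\setminus\{1\}$, $\delta$ maps $t^0(C_0(S^1\setminus\{1\}))$ onto $t^0(C_0(-1,1))$. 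Every path in the graph of $C(B_q^{2n+1})$ ending in $[-1,1]$ splits into blocks $e_{ij}e_{jk}$, $e_{ij}Y_{jn+1}$ and $Y_{in+1}$, each the image of one edge of $E$. Hence the spanning elements $t^n(\xi)t^0(g)t^m(\eta)^*$, $g\in C_0(-1,1)$, of $\ker\alpha$ all lie in $\delta(\ker\gamma)$.

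Finally, your requirement that restriction to $\{\pm1\}$ reproduce $S_{g_{i,1}}\pm S_{g_{i,2}}$ is a genuine point. With $f\circ q$ on every edge, $\alpha\delta$ only produces $f(1)(S_{g_{i,1}}+S_{g_{i,2}})$, while the displayed $\beta$ puts a minus sign on one edge out of each $v_i$; the paper's formula for $\delta$ does not account for this. On the edge of $E$ matched with that one, use $t^1((f\circ q)\phi)$, where $\phi\in C[-1,1]$ is a unitary whose endpoint values are $+1$ and $-1$ matching the signs in $\beta$. For example, take $\phi(x)=x+i\sqrt{1-x^2}$ in the paper's labelling $S_{g_{i,1}}-S_{g_{i,-1}}$. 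Since $|\phi|=1$, the inner products, the covariance at the $v_i$, injectivity and the kernel argument above are all unaffected.
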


\begin{lemma}
The map $\delta : C(\mathbb{R}P_q^{2n+1}) \xrightarrow[]{} C(B_q^{2n+1})$ given by:

\begin{align*}
   t^1(\delta_{e_{ij,jk}}) &\mapsto t^1((\delta_{e_{ij}}))t^1((\delta_{e_{jk}})) \\
   t^1(f_{ij,jn+1}) &\mapsto t^1(\delta_{e_{ij}})t^1(f_{jn+1} \circ q)  \\
   t^1(f_{i n+1,n+1 n+1}) &\mapsto t^1(f_{in+1} \circ q) \\
   t^0(\delta_{v_{i}}, 0) &\mapsto t^0(\delta_{v_i}, 0)  \\
   t^0(0,f) &\mapsto t^0(0,f \circ q)
\end{align*}
is injective. Here we use the fact that, $q : [-1,1] \xrightarrow[]{} S^1 $ is continuous surjective map by identifying the end points induces an injective embedding $C(S^1) \xhookrightarrow{} C[-1,1]$. 
\end{lemma}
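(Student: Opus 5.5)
The plan is to realise $\delta$ via the universal property of $C^*(E)$, where $E$ is the topological graph of Lemma \ref{rp3ta}, and then obtain injectivity from the Cuntz--Krieger uniqueness theorem for topologically free topological graphs.

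\textbf{Step 1: the representation.} Let $F$ be the topological graph of $C(B_q^{2n+1})$. I define $\psi^0:C_0(E^0)\to C^*(F)$ and $\psi^1:X_E\to C^*(F)$ by the formulas in the statement. The map $\psi^0$ is the $*$-homomorphism induced by $\mathrm{id}$ on $\{v_i\}$ together with pullback along $q$ on $S^1$. The map $\psi^1$ is defined on each edge component and extended linearly. Since every edge component is either a point or a circle mapped by $r$ identically onto $S^1$, $X_E$ is spanned as a right module by the listed generators, so this determines $\psi^1$.

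\textbf{Step 2: the representation relations.} I verify (1) and (2) on generators.
\begin{itemize}
\item For the composite generators, $t^1(\delta_{e_{ij}})^*t^1(\delta_{e_{ij}})=t^0(\delta_{v_j})$ in $C^*(F)$. Hence
\[(t^1(\delta_{e_{ij}})t^1(f\circ q))^*(t^1(\delta_{e_{ij}})t^1(g\circ q))=t^1(f\circ q)^*t^0(\delta_{v_j})t^1(g\circ q)=t^0(0,\overline{f}g\circ q),\]
because $s(Y_{jn+1})=v_j$. This equals $\psi^0(\langle f_{ij,jn+1},g_{ij,jn+1}\rangle)$, since $r$ is the identity on both $X_{ij,jn+1}$ and $Y_{jn+1}$ and $q$ is multiplicative.
\item Inner products between distinct edge components vanish on both sides. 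On the $C(B_q^{2n+1})$ side this is because the ranges of distinct $S_{e_{ij}}$'s are orthogonal.
\item The left action condition (2) holds because sources are preserved: $e_{ij,jk}$ and $X_{ij,jn+1}$ start at $v_i$, as do $e_{ij}$ and $Y_{in+1}$.
\end{itemize}

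\textbf{Step 3: covariance (the main obstacle).} The Katsura ideal of $E$ is supported on $E^0_{rg}=\{v_1,\dots,v_n\}$, so covariance must be checked at each $\delta_{v_i}$. Here $\psi^{(1)}(\varphi_E(\delta_{v_i}))$ is a sum over the edges out of $v_i$ in $E$, and the circle components contribute $t^1(z)t^1(z)^*$ through a finite frame. The goal is to show that this sum regroups as
\[\sum_{j}t^1(\delta_{e_{ij}})\Big(\sum_k t^1(\delta_{e_{jk}})t^1(\delta_{e_{jk}})^*+t^1(z_{jn+1}\circ q)t^1(z_{jn+1}\circ q)^*\Big)t^1(\delta_{e_{ij}})^*.\]
Using covariance in $C^*(F)$ at $v_j$, the inner bracket equals $t^0(\delta_{v_j})$. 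The remaining sum $\sum_j t^1(\delta_{e_{ij}})t^1(\delta_{e_{ij}})^*$, together with the $Y_{in+1}$-term coming from the generator $f_{in+1,n+1n+1}$, then gives $t^0(\delta_{v_i})$ by covariance at $v_i$ in $C^*(F)$. The delicate point is matching the index bookkeeping: the diagonal edges $e_{ii}$ and the term $j=n+1$ must account for exactly the edges of $E$ out of $v_i$. This mirrors the directed-graph computation in Lemma \ref{rp3ta}. Universality then gives $\delta:C^*(E)\cong C(\mathbb{R}P_q^{2n+1})\to C(B_q^{2n+1})$.

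\textbf{Step 4: injectivity.} As observed in the proof of Lemma \ref{rp3ta}, $E$ is topologically free: it has no base points of loops without exits. By the Cuntz--Krieger uniqueness theorem for topological graphs, it therefore suffices to show that $\delta\circ t^0$ is injective on $C_0(E^0)$. Since $C_0(F^0)\to C^*(F)$ is injective, this reduces to injectivity of $(a,f)\mapsto(a,f\circ q)$. That map is injective because $q$ is surjective, so $\delta$ is injective.
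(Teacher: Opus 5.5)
Your proposal is correct and follows the same route as the paper. You define $(\psi^0,\psi^1)$ on the topological graph $E$ of Lemma \ref{rp3ta} by the given formulas, check that it is a covariant representation so that universality gives $\delta$, and obtain injectivity from the Cuntz--Krieger uniqueness theorem using topological freeness of $E$ and injectivity of $(a,f)\mapsto(a,f\circ q)$. Your Step 3 actually supplies the covariance computation at each $v_i$ that the paper only asserts, and it correctly relies on the circles $X_{in+1,n+1n+1}$ being edges of $E$; the index range $j<n+1$ in Lemma \ref{rp3ta} should read $j\le n+1$.
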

\begin{proof}
Consider the tuple of maps $(\psi^0, \psi^1) : (C_0(E^0), X_E) \xrightarrow[]{} C(B_q^{2n+1})$ defined by:

\begin{align*}
    \psi^1((\delta_{e_{ij,jk}})) &= t^1((\delta_{e_{ij}}))t^1((\delta_{e_{jk}})) \\
   \psi^1(f_{ij,jn+1}) &= t^1(\delta_{e_{ij}})t^1(f_{jn+1} \circ q)  \\
   \psi^1(f_{i n+1,n+1 n+1}) &= t^1(f_{in+1} \circ q) \\
   \psi^0((\delta_{v_{i}})) &= t^0(\delta_{v_i})  \\
   \psi^0(0,f) &= t^0(0,f \circ q)
\end{align*}

$(\psi^0, \psi^1)$ is a covariant representation of $C(\mathbb{R}P_q^{2n+1})$. So we have an induced $*$-homomorphism $\delta$ as defined above.

Now, $C(\mathbb{R}P_q^{2n+1})$ is topologically free and $\delta |_{C_0(E^0)}$ is injective, so by Cuntz-Krieger uniqueness theorem, $\delta$ is injective everywhere.
\end{proof}
\begin{lemma}
Let $E$ be the directed graph corresponding to $C(\mathbb{R}P_q^{2n})$ as given in Proposition \ref{rpga} and $F$ be the topological graph corresponding to $C(\mathbb{R}P_q^{2n+1})$ as in Lemma \ref{rp3ta}. Then the maps $m^0 : E^0 \xrightarrow[]{} F^0$ and $m^1 : E^1 \xrightarrow[]{} F^1$, be given by:
\begin{align*}
    m^0(v_i) &= v_i \hspace{3cm} i \neq n+1 \\
    m^0(v_{n+1}) &= \{1\} (\in S^1) \\
    m^1(e_{ij,jk}) &= e_{ij,jk} \\
    m^1(e_{ij,jn+1}) &= \{1\} (\in X_{ij,jn+1}) \\
\end{align*}
gives a injective regular factor map, and consequently we have a surjective $*$-homomorphism $\gamma: C(\mathbb{R}P_q^{2n+1}) \xrightarrow[]{} C(\mathbb{R}P_q^{2n})$, with $ker\gamma = \langle t^0(f) \vert f \in C_0(S^1\backslash\{1\})\rangle$.

\end{lemma}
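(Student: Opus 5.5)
We will verify the three axioms (F1)--(F3) of Definition \ref{factorkatsura} for $m=(m^1,m^0)$. Proposition \ref{functc*} then gives the gauge-equivariant $*$-homomorphism $\gamma=\mu$. After that we prove surjectivity and compute the kernel through the admissible pair, using Theorem \ref{idealclass}. Properness and continuity are automatic: $E^0$ and $E^1$ are finite discrete, and $m^0,m^1$ are injective maps into a space where finite sets are compact.

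For (F1), every edge $e_{ij,jk}$ with $k\le n$ has the same source and range in both graphs. For the edges ending at $v_{n+1}$ (including the loops $e_{in+1,n+1n+1}$-type edges in the $2n+1$ graph restricted to $i\le n$), we have $r_F(1\in X_{ij,jn+1})=1=m^0(v_{n+1})$ because $r_F|_{X}=\mathrm{id}_{S^1}$. Also $s_F=v_i=m^0(s_E)$.

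For (F2), take $x\in F^1$ and $v\in E^0$ with $r_F(x)=m^0(v)$. If $v=v_k$ with $k\le n$, then $x$ is a discrete edge $e_{ij,jk}$, and its unique preimage is $e_{ij,jk}$. If $v=v_{n+1}$, then $x$ is the point $1$ of some $X_{ij,jn+1}$, and its unique preimage is $e_{ij,jn+1}$. Here we need the index sets to match exactly. The edges of $E$ into $v_{n+1}$ are $e_{ij,jn+1}$ with $i\le n$, and the circles $X_{ij,jn+1}$ of $F$ are indexed by $1\le i\le n$, $i\le j\le n$ (with $j=n+1$ covering the loop-type circle, as in Lemma \ref{rp3ta}). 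This bookkeeping of indices is the one point to check carefully.

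For (F3), we have $E^0_{sg}=\{v_{n+1}\}$, since $v_{n+1}$ is a sink in the $2n$-graph. Also $F^0_{sg}=S^1$, since $F^0_{rg}=\{v_1,\dots,v_n\}$ as noted in the proof of Lemma \ref{rp3ta}. So $m^0(v_{n+1})=1\in F^0_{sg}$.

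Surjectivity is immediate from the formula $\gamma\circ t^i_F=\mu^i$. Concretely, $\mu^0$ maps $(\delta_{v_i},0)\mapsto P_{v_i}$ and $(0,1)\mapsto P_{v_{n+1}}$. Likewise $\mu^1$ maps $\delta_{e_{ij,jk}}\mapsto S_{e_{ij,jk}}$ and $1_{X_{ij,jn+1}}\mapsto S_{e_{ij,jn+1}}$. All generators of $C(\mathbb{R}P_q^{2n})$ are therefore hit.

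For the kernel, we use that $\ker\gamma$ is gauge-invariant, which holds because $\gamma$ is gauge-equivariant. Its admissible pair is computed by \cite[Proposition 4.1]{gothe_gluing_2026}, as in the proof of Proposition \ref{hsker}, and is $(m^0(E^0),m^0(E^0_{sg}))=(\{v_1,\dots,v_n\}\sqcup\{1\},\{1\})$.

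Independently, we let $V=S^1\setminus\{1\}\subseteq F^0$ and show it is hereditary and saturated in the sense of Definition \ref{hss}. It is hereditary because $s_F^{-1}(V)=\emptyset$: no edge has source on the circle. It is saturated because a regular vertex $v_i$ always emits an edge $e_{ii,ii}$ whose range $v_i$ lies outside $V$. Hence $S(H(V))=V$. By \cite[Proposition 6.1]{katsura_class_2006-1}, the ideal generated by $t^0(C_0(V))$ is gauge-invariant with admissible pair $(F^0\setminus V,F^0_{sg}\setminus V)$, which equals the pair above. Theorem \ref{idealclass} then gives $\ker\gamma=\langle t^0(f)\mid f\in C_0(S^1\setminus\{1\})\rangle$.

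The main obstacle is the kernel identification. In particular, it relies on the cited formula $(m^0(E^0),m^0(E^0_{sg}))$ for the admissible pair of the kernel of a factor-map-induced homomorphism. If we had to reprove it, we would compute $Y^0_{\ker\gamma}$ directly: $t^0(f)\in\ker\gamma$ iff $f\circ m^0=0$, so $Y^0$ is the image of $m^0$. For $Z^0$, we would note that $v_{n+1}$ is a sink, so $\{1\}$ cannot be absorbed into the compact part.
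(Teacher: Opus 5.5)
Your proposal is correct and follows the paper's route. You check that $m$ is a regular factor map using $E^0_{\rm rg}=F^0_{\rm rg}=\{v_1,\dots,v_n\}$, obtain $\gamma$ from Proposition \ref{functc*}, identify the admissible pair of $\ker\gamma$ via \cite{gothe_gluing_2026}, and match it with the ideal generated by $t^0(C_0(S^1\setminus\{1\}))$ because $V=S^1\setminus\{1\}$ is hereditary and saturated, with your version writing out the checks of (F1)--(F3), surjectivity and heredity/saturation that the paper calls easy.

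Two small clean-ups. First, your index description in (F2) contradicts itself: the circles must be indexed by $1\le i\le n$, $i\le j\le n+1$, and the ``$j<n+1$'' in Lemma \ref{rp3ta} is a typo, as the use of $f_{in+1,n+1n+1}$ in the definition of $\delta$ confirms. Second, $Z^0$ needs no separate argument: for $Y^0=\{v_1,\dots,v_n,1\}$ one has $Y^0_{\rm sg}=F^0_{\rm sg}\cap Y^0=\{1\}$, which already forces $Z^0=\{1\}$.
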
 
\begin{proof}
 It is easy to check that $(m^0,m^1)$ is a proper factor map, combined with the fact that $E^0_{rg} = F^0_{rg} = \{v_1, v_2, \dots ,v_n\}$, makes it a regular factor map. Now by \cite[Lemma 4.1 ]{gothe_gluing_2026}, the invariant subset corresponding to the ideal $ker\gamma$ is $\{v_1,v_2,\dots,v_n,1\}$.
    
$V = S^1 \backslash \{1\}$ is a saturated and hereditary subset of $E^0$. And by the same argument of Proposition \ref{hsker}, the result follows. 
\end{proof}    
\begin{proof}
Proof of the theorem

    The Pedersen conditions are:
    \begin{enumerate}
        \item $ker\delta \cap ker\gamma = 0$
        
        $\delta$ is injective.
        
        \item $\beta^{-1}(\alpha(C(B_q^{2n+1}))) = \gamma(C(\mathbb{R}P_q^{2n+1})$
        
        $\gamma$ and $\alpha$ are surjective.
        
        \item $\delta(ker \gamma) = ker\alpha$
        
        Let, $a \in ker\alpha$, then say $a = t^n(\xi)t^0(f)t^m(\eta)^*$. If $\xi = \xi_1\otimes \xi_2 \otimes \dots \otimes \xi_n$, then,  $a = t^1(\xi_1)t^1(\xi_2)\dots t^1(\xi_n) t^0(f) t^m(\eta)^*$. But $t^1(\xi_n)t^0(f) = t^1(\xi_n \cdot f)$ and as $f \in C_0(-1,1)$ and $r = id$, we have that $\xi_n \in C_0(-1,1)$ and by construction of $\delta$, it is routine computation to check that image of $\delta$ will contain all paths ending in $(-1,1)$. Similar result holds for $t^m(\eta)^*$, and as we have that the generator of $ker\gamma$ maps to generator of $ker \alpha$, we have that $ker \alpha \subseteq \delta(ker\gamma)$ and we are done. The other subset inequality follows from the commutativity of the diagram.
    \end{enumerate}
 
\end{proof}
 
\section{Multipullbacks of \texorpdfstring{$C^*$}{C*}-algebras}

Multipullbacks of $C^*$-algebras were first defined in \cite{pedersen_pullback_1999} (called \textit{Multirestricted direct sums}). These were used to construct the odd-dimensional multipullback quantum spheres and complex projective spaces \cite{hajac_k-theory_2018}. These $C^*$-algebras are the boundaries of a version of even-dimensional quantum balls given by n-copies of the Toeplitz algebra. We refer to \cite{hajac_k-theory_2018} for a general introduction of multipullbacks of $C^*$-algebras and the odd-dimensional multipullback quantum spheres and the complex projective spaces.

\begin{definition}(\cite{pedersen_pullback_1999})
Let $I$ be a finite indexing set and $\pi_j^i : A_i \xrightarrow[]{} A_{ij} \ \  i,j \in I$ and $A_{ij} = A_{ji}$ be a set of $*$-homomorphisms between $C^*$-algebras. Then the multipullback $C^*$-algebra $\oplus_{ij}A_i$ is the categorical wide pullback of the diagram defined by the $*$-homomorphisms. Alternatively, $\oplus_{ij}A_i$ can be defined as the $C^*$-algebra : $\oplus_{ij}A_i = \{ (a_1,\dots,a_i,\dots,a_j,\dots) \in \oplus A_i \ \vert \ \pi_j^i(a_i) = \pi_i^j(a_j) \}$
\end{definition}
The multipullback odd spheres can be regarded as the boundary of the $2n+2$-ball given by $\mathcal{T}^{n+1}$. To fix notation, let $t_i = \underbrace{1 \otimes 1\otimes \dots 1}_{i \text{ times }}\otimes s \otimes \underbrace{1 \otimes \dots \otimes 1}_{n-i \text{ times }}$ where, $0 \leq i \leq n$.
\begin{definition}(\cite[Definition 3.2]{hajac_k-theory_2018})
 Let $I_i = \langle 1-t_it_i^*\rangle$ and let $I_{ij} = I_i + I_j$. Finally, let the quotient algebra be denoted by $B_i = \mathcal{T}^{n+1} / I_i$ and $B_{ij} = \mathcal{T}^{n+1} / I_{ij} $
 \begin{align*}
     \sigma_i &:  \mathcal{T}^{n+1} \twoheadrightarrow B_i \\
     \pi_j^i &: B_i \twoheadrightarrow B_{ij} 
 \end{align*}
 then the \textit{(untwisted) multipullback odd sphere} is defined as the multipullback $C^*$-algebra:
 \[
 C(S_H^{2n+1}) := \{ (b_0, b_1, \dots, b_n) \in \bigoplus B_i \vert \pi_j^i(b_i) = \pi_i^j(b_j) \text{ for all } 0 \leq i<j \leq n \} 
 \]
\end{definition}
\begin{proposition}(\cite[Theorem 3.3]{hajac_k-theory_2018})
The multipullback odd sphere $C^*$-algebra $C^*(S_H^{2n+1})$ is generated by a set of generators - $\{s_0,s_1,\dots,s_n\}$ such that the following conditions hold:

\begin{enumerate}
    \item $s_i^*s_i = 1$ \hspace{3cm} $i \in \{0,1,2,\dots,n\}$
    \item $s_is_j = s_js_i$  \hspace{3cm} $ i \neq j$
    \item $s_is_j^* = s_j^*s_i$ \hspace{3cm} $i < j$
    \item $\prod_i (1-s_is_i^*) = 0$
\end{enumerate}
\end{proposition}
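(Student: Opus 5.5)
The plan is to realize $C(S_H^{2n+1})$ as a quotient of $\mathcal{T}^{\otimes(n+1)}$ and take the images of the isometries $t_i$ as the generators. I would define
\[
s_i:=(\sigma_0(t_i),\sigma_1(t_i),\dots,\sigma_n(t_i))\in\bigoplus_j B_j .
\]
Since every $\pi^j_k\circ\sigma_j$ is the canonical quotient map $\mathcal{T}^{\otimes(n+1)}\to B_{jk}$, these tuples satisfy the compatibility conditions, so $s_i\in C(S_H^{2n+1})$. More generally, the diagonal map $\sigma=(\sigma_0,\dots,\sigma_n):\mathcal{T}^{\otimes(n+1)}\to\bigoplus_j B_j$ lands in the multipullback.

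Relations (1)--(3) then come directly from $\mathcal{T}^{\otimes(n+1)}$, because $\sigma$ is a $*$-homomorphism.
\begin{enumerate}
\item Each $t_i$ is an isometry.
\item For $i\neq j$, the elements $t_i$ and $t_j$ sit in different tensor factors, so they commute.
\item For the same reason $t_j^*$ commutes with $t_i$, which gives $t_it_j^*=t_j^*t_i$.
\end{enumerate}
For relation (4), set $p_i=1-t_it_i^*$; these projections pairwise commute. In the $j$-th component we have $\sigma_j(p_j)=0$, because $p_j$ generates $I_j$. Hence $\sigma_j\bigl(\prod_i p_i\bigr)=0$ for every $j$, and so $\prod_i(1-s_is_i^*)=0$.

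The real content is that the $s_i$ generate, and I would show this by proving that $\sigma$ maps $\mathcal{T}^{\otimes(n+1)}$ onto $C(S_H^{2n+1})$. Since the $t_i$ generate $\mathcal{T}^{\otimes(n+1)}$, surjectivity gives generation at once.

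\begin{enumerate}
\item First I would identify the ideals: $I_i=\mathcal{T}\otimes\cdots\otimes\mathcal{K}\otimes\cdots\otimes\mathcal{T}$, with $\mathcal{K}$ in the $i$-th slot.
\item Next I would invoke the criterion for multipullbacks of quotients (Pedersen; Calow--Matthes; as used in \cite{hajac_k-theory_2018}). It says that if the ideals $I_0,\dots,I_n$ generate a distributive lattice, then the canonical map $A/\bigcap_i I_i\to\bigoplus_{ij}A/I_i$ is an isomorphism. Here the intermediate quotients are $A/(I_i+I_j)=B_{ij}$.
\item The ideals have the form $J_0\otimes\cdots\otimes J_n$ with $J_k\in\{\mathcal{K},\mathcal{T}\}$. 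Sums and intersections of such tensor-product ideals are computed slot-wise, since all algebras involved are nuclear and $0\to\mathcal{K}\to\mathcal{T}\to C(S^1)\to0$ is exact. Because the slots range over the two-element chain $\mathcal{K}\subset\mathcal{T}$, the lattice generated is distributive.
\item Consequently $\sigma$ is surjective, with kernel $\bigcap_i I_i=\mathcal{K}^{\otimes(n+1)}$. This gives $C(S_H^{2n+1})\cong\mathcal{T}^{\otimes(n+1)}/\mathcal{K}^{\otimes(n+1)}$, which is generated by the images $s_i$ of the $t_i$.
\end{enumerate}

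The main obstacle is the distributivity check. Verifying identities such as $(I_0+I_1)\cap I_2=I_0\cap I_2+I_1\cap I_2$ inside a tensor product requires knowing that intersections of sums of tensor ideals behave slot-wise. I would first prove this for $n=1$ directly, using the short exact sequence above together with the exactness of $\otimes$ by nuclear algebras, and then induct on $n$ by writing $\mathcal{T}^{\otimes(n+1)}=\mathcal{T}^{\otimes n}\otimes\mathcal{T}$.

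If a universality statement is also wanted, I would add one more step. Any family of elements satisfying (1)--(4) generates a quotient of the universal algebra of (1)--(3), which is $\mathcal{T}^{\otimes(n+1)}$ by Wold decomposition applied to doubly commuting isometries. Relation (4) then forces $\mathcal{K}^{\otimes(n+1)}$, the ideal generated by $\prod_i p_i$, into the kernel, so the map factors through the isomorphism above.
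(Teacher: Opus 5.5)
The paper does not prove this statement; it imports it from \cite[Theorem 3.3]{hajac_k-theory_2018}. Your argument therefore has to stand on its own, and it does. You take $s_i=\sigma(t_i)$ for the diagonal map $\sigma\colon\mathcal{T}^{\otimes(n+1)}\to\bigoplus_j B_j$, read off (1)--(4) in $\mathcal{T}^{\otimes(n+1)}$, and get generation from surjectivity of $\sigma$ via the covering criterion for finitely many ideals. That is a complete proof. It also gives more than is stated: $C(S_H^{2n+1})\cong\mathcal{T}^{\otimes(n+1)}/\mathcal{K}^{\otimes(n+1)}$. After shifting $n$ and using $\mathcal{K}^{\otimes(n+1)}\cong\mathcal{K}$, this is exactly the exact sequence the paper quotes separately in Proposition~\ref{bdos}.

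Three comments on the step you call the main obstacle, and on the universality step.

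\textbf{Distributivity is automatic.} The closed ideals of any C*-algebra form a distributive lattice. Indeed, $I\cap J$ is the closed span of $IJ$, and sums of closed ideals are closed. Hence $I\cap(J+K)=\overline{\mathrm{span}}\,I(J+K)\subseteq I\cap J+I\cap K$, and the reverse inclusion is trivial. Equivalently, closed ideals correspond to open subsets of $\mathrm{Prim}(A)$, with sums and finite intersections going to unions and intersections. So every finite family of closed ideals meets the criterion, and no tensor-product computation is needed.

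\textbf{Your sketched justification is wrong as stated.} Sums of elementary tensor ideals are not computed slot-wise. For $n=1$, $I_0+I_1=\mathcal{K}\otimes\mathcal{T}+\mathcal{T}\otimes\mathcal{K}$ is none of the four ideals $J_0\otimes J_1$ with $J_k\in\{\mathcal{K},\mathcal{T}\}$; its quotient is $B_{01}\cong C(\mathbb{T}^2)$. So the lattice generated by the $I_i$ is not a sublattice of a product of two-element chains. Only intersections behave slot-wise. That is all you actually use, namely to get $\bigcap_i I_i=\mathcal{K}^{\otimes(n+1)}$.

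\textbf{Universality needs less than Wold decomposition.} Relations (2) and (3), together with their adjoints, say that the algebras $C^*(s_i)$ pairwise commute. Coburn's theorem plus the universal property of $\otimes_{\max}$ then gives the map from $\mathcal{T}^{\otimes(n+1)}$ directly, since $\otimes_{\max}$ coincides with $\otimes$ for the nuclear algebra $\mathcal{T}$.
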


\begin{proposition}(\cite[Lemma 5.1]{hajac_k-theory_2018})
The following is a short exact sequence:
\[\begin{tikzcd}[ampersand replacement=\&]
	0 \& {\mathcal{K}} \& {\mathcal{T}^n} \& {C(S_H^{2n-1})} \& 0
	\arrow[from=1-1, to=1-2]
	\arrow[hook, from=1-2, to=1-3]
	\arrow["{\sigma_n}", two heads, from=1-3, to=1-4]
	\arrow[from=1-4, to=1-5]
\end{tikzcd}\]
where $\sigma_n(t_i) = s_i$.
\label{bdos}
\end{proposition}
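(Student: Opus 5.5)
The plan is to recognise $\mathcal{T}^{\otimes n}$ as a (tensor) Toeplitz algebra in which the intersection of the ideals generated by the defect projections $p_i := 1-t_it_i^*$ is exactly the compacts. The main steps are as follows.

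\textbf{Step 1: the kernel is generated by $p_1\otimes\cdots\otimes p_n$.} The first claim is that $\ker\sigma_n$ is the closed ideal generated by
\[
P:=\prod_i(1-t_it_i^*) = p\otimes p\otimes\cdots\otimes p ,
\]
where $p$ is the rank-one projection in $\mathcal{T}\subset B(\ell^2(\mathbb{N}))$. The ideal generated by $P$ inside $\mathcal{T}^{\otimes n}\subseteq B(\ell^2(\mathbb{N}^n))$ is $\mathcal{K}\otimes\cdots\otimes\mathcal{K}\cong\mathcal{K}(\ell^2(\mathbb{N}^n))\cong\mathcal{K}$. This holds because $t^{\alpha}P\,t^{*\beta}$ are precisely the matrix units $e_{\alpha\beta}$, and these span a dense subspace of the compacts.

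\textbf{Step 2: well-definedness and surjectivity of $\sigma_n$.} Define $\sigma_n$ as the diagonal map $a\mapsto(\sigma_0(a),\dots,\sigma_{n-1}(a))\in\bigoplus B_i$. Its image lies in the multipullback $C(S_H^{2n-1})$ automatically, since $\pi^i_j\sigma_i=\pi^j_i\sigma_j$; this is the quotient map onto $\mathcal{T}^{\otimes n}/I_{ij}$. We have $\sigma_n(t_i)=s_i$, and by the preceding proposition the $s_i$ generate the multipullback, so $\sigma_n$ is surjective. (Alternatively, one can use the general fact from Pedersen/Hajac et al.\ that for ideals $I_i$ of a $C^*$-algebra, $A/\bigcap I_i$ maps isomorphically onto the multipullback of the $A/I_i$ over $A/(I_i+I_j)$, given distributivity of the ideal lattice.)

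\textbf{Step 3: identify the kernel.} By construction, $\ker\sigma_n=\bigcap_i I_i$. Since $P=\prod p_i\in I_i$ for every $i$, the ideal generated by $P$ is contained in $\ker\sigma_n$; this inclusion is clear.

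\textbf{Main obstacle: the reverse inclusion $\bigcap_i I_i\subseteq\mathcal{K}$.} Here $I_i=\mathcal{T}\otimes\cdots\otimes\mathcal{K}\otimes\cdots\otimes\mathcal{T}$, with $\mathcal{K}$ in slot $i$. I would prove the equality
\[
\bigcap_i I_i=\mathcal{K}^{\otimes n}
\]
by induction on $n$, using nuclearity: tensoring the exact sequence $0\to\mathcal{K}\to\mathcal{T}\to C(S^1)\to 0$ with $\mathcal{T}^{\otimes(n-1)}$ stays exact. Then $I_n\cap J=(\mathcal{T}^{\otimes(n-1)}\otimes\mathcal{K})\cap(J'\otimes\mathcal{T})$, where $J'=\bigcap_{i<n}I_i\subseteq\mathcal{T}^{\otimes(n-1)}$, and this equals $J'\otimes\mathcal{K}$. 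The last equality holds because the image of an element of the intersection in $\mathcal{T}^{\otimes(n-1)}\otimes C(S^1)$ vanishes, so the element lies in $\mathcal{T}^{\otimes(n-1)}\otimes\mathcal{K}$. Applying slice maps $\mathrm{id}\otimes\omega$ for $\omega\in\mathcal{K}^*$ then puts its slices in $J'$, and the slice-map property for exact/nuclear $\mathcal{K}$ gives membership in $J'\otimes\mathcal{K}$. By the induction hypothesis $J'=\mathcal{K}^{\otimes(n-1)}$, which yields $\mathcal{K}^{\otimes n}\cong\mathcal{K}$.

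Combining the steps gives the short exact sequence.
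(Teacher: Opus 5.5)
The paper gives no proof of this proposition; it imports it from \cite[Lemma 5.1]{hajac_k-theory_2018}. Your argument is therefore a self-contained proof rather than a variant of one in the paper, and it is correct.

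The steps check out as follows:
\begin{itemize}
\item The diagonal map lands in the multipullback, because $\pi^i_j\sigma_i$ and $\pi^j_i\sigma_j$ are both the quotient map onto $\mathcal{T}^{\otimes n}/I_{ij}$.
\item Surjectivity follows either from the cited presentation or from distributivity of the lattice of closed ideals.
\item $\ker\sigma_n=\bigcap_i I_i$ holds by construction.
\item The matrix-unit computation $t^\alpha P t^{*\beta}=e_{\alpha\beta}$ correctly identifies the ideal generated by $P$ with $\mathcal{K}(\ell^2(\mathbb{N}^n))$.
\item Your slice-map argument for $(\mathcal{T}^{\otimes(n-1)}\otimes\mathcal{K})\cap(J'\otimes\mathcal{T})=J'\otimes\mathcal{K}$ is valid. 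To get the slices into $J'$, extend $\omega\in\mathcal{K}^*$ to $\mathcal{T}$; then use the Fubini-product property of the exact algebra $\mathcal{K}$.
\end{itemize}

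Two remarks on the key step.

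First, the induction silently uses $\bigcap_{i<n}(I'_i\otimes\mathcal{T})=J'\otimes\mathcal{T}$. This needs the same kind of justification as the step you do argue, for instance slices plus exactness of $\mathcal{T}$.

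Second, the whole exactness/slice-map apparatus can be bypassed. For closed two-sided ideals of a $C^*$-algebra one has $I\cap J=\overline{IJ}$: if $x\in I\cap J$ and $(u_\lambda)$ is an approximate unit of $I$, then $u_\lambda x\to x$ with $u_\lambda x\in IJ$. Hence
\[
\bigcap_{i}I_i=\overline{I_0I_1\cdots I_{n-1}} .
\]
Multiplying elementary tensors slot by slot shows this closed product is exactly $\mathcal{K}^{\otimes n}$. Each slot acquires a compact factor from the ideal that has $\mathcal{K}$ in that slot. Conversely,
\[
k_0\otimes\cdots\otimes k_{n-1}=\prod_i(1\otimes\cdots\otimes k_i\otimes\cdots\otimes 1).
\]
This gives the reverse inclusion in one line, with no induction and no appeal to nuclearity or exactness. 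Your route reaches the same conclusion but at the cost of the Fubini-product argument and the extra unproved identification above.
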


So analogously we define the multipullback even spheres as the boundary of an odd-dimensional ball.

Let $I$ be the ideal generated by $\prod (1-t_it_i^*)(1-z^2)$.
 \begin{definition}
 Let $C(S_H^{2n})$ be the quotient $C^*$-algebra $\mathcal{T}^n \otimes C[-1,1] / I$, called the even-dimensional multipullback quantum spheres.
 \end{definition}
 We will justify this name by proving this $C^*$-algebra is isomorphic to a multipullback of $C^*$-algebras akin to the construction of a classical construction of a cylinder by the means of attaching balls ($B^{2n}$) to the ends of an open cylinder ($S^{2n-1} \times [-1.1]$).
\begin{proposition}
 $C^*(s_0,s_1,\dots,s_n)$ be the $C^*$-algebra generated by a set of elements $s_0, s_1 , s_2, \dots , s_n$ satisfying the relations:
\begin{enumerate}
    \item $s_i s_j = s_j s_i$
    \item $s_i^* s_j = s_j s_i^*$ for $ i < j$
    \item $s_i^*s_i = 1$ for $ i < n$
    \item $\prod (1-s_is_i^*) = 0$
    \item $s_n = s_n^*$ and $||s_n|| \leq 1$
\end{enumerate}
then we have that $C(S_H^{2n}) \cong C^*(s_0,s_1,\dots,s_n)$.
\end{proposition}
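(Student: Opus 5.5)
The plan is to compare two universal objects: the quotient $\mathcal{T}^n\otimes C[-1,1]/I$ and the universal $C^*$-algebra $C^*(s_0,\dots,s_n)$ defined by relations (1)--(5). I will construct $*$-homomorphisms in both directions and check that they are mutually inverse on generators. Throughout, $\mathcal{T}^n$ means $\mathcal{T}^{\otimes n}$ with generators $t_0,\dots,t_{n-1}$, and $z$ denotes the coordinate function of $C[-1,1]$. Write $\pi:\mathcal{T}^n\otimes C[-1,1]\to C(S_H^{2n})$ for the quotient map. Since $\mathcal{T}$ and $C[-1,1]$ are nuclear, all tensor products here are unambiguous, so the minimal and maximal tensor products agree. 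I also read relation (4) as the product over $i=0,\dots,n$, so that the factor for $i=n$ is $1-s_ns_n^*=1-s_n^2$. This matches the generator $\prod_{i<n}(1-t_it_i^*)(1-z^2)$ of $I$.

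\emph{Step 1: a map $\Phi: C^*(s_0,\dots,s_n)\to C(S_H^{2n})$.} Put $\Phi(s_i)=\pi(t_i\otimes 1)$ for $i<n$ and $\Phi(s_n)=\pi(1\otimes z)$. In $\mathcal{T}^{\otimes n}$ the elements $t_i$ are isometries that commute and $*$-commute for distinct indices, which gives relations (1)--(3). The element $1\otimes z$ is self-adjoint of norm $1$ and commutes with everything in $\mathcal{T}^n\otimes 1$, which gives (5) and the instances of (1)--(2) with $j=n$. Relation (4) holds because its left-hand side is sent to $\pi$ of the generator of $I$, which is zero. By universality $\Phi$ exists, and it is surjective because its image contains $\pi$ of a generating set.

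\emph{Step 2: a map $\Psi: \mathcal{T}^n\otimes C[-1,1]\to C^*(s_0,\dots,s_n)$.} First I upgrade relation (2) to all $i\neq j$. For $j<i$, relation (2) reads $s_j^*s_i=s_is_j^*$; taking adjoints gives $s_i^*s_j=s_js_i^*$. So $s_0,\dots,s_{n-1}$ are doubly commuting isometries by (1)--(3). I will then use the standard fact that $\mathcal{T}^{\otimes n}$ is the universal $C^*$-algebra of $n$ doubly commuting isometries. This follows by induction from Coburn's theorem together with the universal property of the maximal tensor product for commuting images. It gives a map $\mathcal{T}^n\to C^*(s_0,\dots,s_n)$ with $t_i\mapsto s_i$.

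Next, $s_n$ is a self-adjoint contraction, so $\sigma(s_n)\subseteq[-1,1]$ and functional calculus gives $C[-1,1]\to C^*(s_0,\dots,s_n)$ with $z\mapsto s_n$. By (1), (2) and their adjoints, $s_n$ commutes with every $s_i$ and $s_i^*$. Hence the two images commute, and the universal property of $\otimes_{\max}=\otimes$ yields $\Psi$. Relation (4) says exactly that $\Psi$ kills the generator of $I$, so $\Psi$ factors through $\bar\Psi: C(S_H^{2n})\to C^*(s_0,\dots,s_n)$.

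\emph{Step 3.} Both composites $\bar\Psi\circ\Phi$ and $\Phi\circ\bar\Psi$ fix generators, so they are the identity, and $\Phi$ is an isomorphism.

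The only step needing real care is the universality of $\mathcal{T}^{\otimes n}$ for doubly commuting isometries used in Step 2. I would cite it from the literature on multipullback spheres, where it underlies \cite[Theorem 3.3]{hajac_k-theory_2018}, or prove it by induction as indicated. The remaining verifications are routine checks of relations.
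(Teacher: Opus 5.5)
Your proof is correct and follows the paper's route: the paper's $\sigma$ is your $\Psi$ ($t_i\mapsto s_i$, $z\mapsto s_n$), and your reading of relation (4) as including the factor $1-s_n^2$ is the one the paper itself uses later. The paper only asserts that $\ker\sigma=I$; your reverse map $\Phi$, together with your justification that $\Psi$ exists (Coburn's theorem, double commutation, functional calculus and the maximal tensor product), supplies the argument that this assertion needs.
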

\begin{proof}
Let $\sigma : \mathcal{T}^n \otimes C[-1,1] \xrightarrow[]{} C^*(s_0,s_1,\dots,s_n)$ be the following map:
\begin{align*}
    \sigma(t_i) &= s_{i} \\
    \sigma(z) &= s_n
\end{align*}

This is clearly a $^*$-homomorphism, and the kernel is the ideal generated by the element $\prod (1-t_it_i^*)(1-z^2)$, which is isomorphic to $\mathcal{K} \otimes C_0(-1,1)$ so we have the required isomorphism between $C(S_H^{2n})$ and $C^*(s_0,s_1,\dots,s_n)$.
\end{proof}

\begin{proposition}
Let the following be 1-surjective pullback of unital $C^*$-algebras. 
\[\begin{tikzcd}[ampersand replacement=\&]
	\& X \& \\
	A \&\& B \\
	\& C
	\arrow["\delta", two heads, from=1-2, to=2-1]
	\arrow["\gamma"', from=1-2, to=2-3]
	\arrow["\alpha", from=2-1, to=3-2]
	\arrow["\beta"', two heads, from=2-3, to=3-2]
\end{tikzcd}\]
Then the induced diagram on the quantum double suspensions is also a pullback diagram, viz, $\Sigma^2 X \cong \Sigma^2A \bigoplus_{\Sigma^2C} \Sigma^2B$.
\label{qdsp}
\end{proposition}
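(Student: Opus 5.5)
The plan is to reduce the statement to the tensor-product behaviour of pullbacks, using the concrete description of the quantum double suspension. By the definition adopted from \cite{chakraborty_quantum_2011}, $\Sigma^2 A$ is the $C^*$-subalgebra of $A\otimes\mathcal{T}$ generated by $a\otimes p$ and $1\otimes s$. Since $p\mathcal{T}p=\mathbb{C}p$ and the $s^ip s^{*j}$ span $\mathcal{K}$, this subalgebra is $A\otimes\mathcal{K}+\mathbb{C}1\otimes C^*(s)$. Equivalently, $\Sigma^2A$ is the preimage of $\mathbb{C}1\otimes C(S^1)$ under the quotient map
\[
A\otimes\mathcal{T}\to A\otimes C(S^1).
\]
This gives an exact sequence $0\to A\otimes\mathcal{K}\to\Sigma^2A\to C(S^1)\to 0$, natural in unital $*$-homomorphisms. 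So I will first check that $\Sigma^2$ is a functor on unital $*$-homomorphisms: $\phi\otimes\mathrm{id}_{\mathcal{T}}$ maps generators to generators. I will also check that it preserves surjectivity, since the image of a generating set generates.

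The main step is the tensor-product statement. Because $\mathcal{T}$ is nuclear, and in particular exact, the functor $-\otimes\mathcal{T}$ preserves pullbacks in which one leg is surjective. Concretely, take $X=A\oplus_C B$ with $\beta$ surjective and consider the short exact sequence
\[
0\to\ker\beta\to X\xrightarrow{\delta} A\to 0,
\]
where $\ker\delta\cong\ker\beta$ via $\gamma$. Tensoring with $\mathcal{T}$ keeps it exact, and the same holds for $0\to\ker\beta\to B\to C\to 0$. A five-lemma-type argument on these two diagrams (the Pedersen criterion: $\ker(\delta\otimes1)\cap\ker(\gamma\otimes1)=0$, the appropriate preimage condition, and $(\gamma\otimes1)(\ker(\delta\otimes1))=\ker(\beta\otimes1)$) then gives
\[
X\otimes\mathcal{T}\cong(A\otimes\mathcal{T})\oplus_{C\otimes\mathcal{T}}(B\otimes\mathcal{T}).
\]

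Next I restrict to the suspensions. The map $\Sigma^2X\to\Sigma^2A\oplus_{\Sigma^2C}\Sigma^2B$ is injective because it is the restriction of the isomorphism above. For surjectivity, take a pair $(a,b)\in\Sigma^2A\times\Sigma^2B$ with matching images in $\Sigma^2C$. It comes from a unique $x\in X\otimes\mathcal{T}$, and I must show $x\in\Sigma^2X$. Write $a=a_0+\lambda(1\otimes g)$ and $b=b_0+\mu(1\otimes h)$ with $a_0\in A\otimes\mathcal{K}$, $b_0\in B\otimes\mathcal{K}$ and $g,h\in C^*(s)$. The symbol parts must agree in $C(S^1)$, since they agree after mapping to $\Sigma^2C$ and unitality makes $C(S^1)$ a common quotient. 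Subtracting $1\otimes\tilde g$ for a common lift $\tilde g\in C^*(s)$ reduces the problem to a pair in $(A\otimes\mathcal{K})\times(B\otimes\mathcal{K})$ agreeing in $C\otimes\mathcal{K}$. Such a pair lifts to $X\otimes\mathcal{K}$ by the same exactness argument with $\mathcal{K}$ in place of $\mathcal{T}$, and $X\otimes\mathcal{K}\subseteq\Sigma^2X$.

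I expect the main obstacle to be the careful verification of exactness and the pullback property after tensoring. This includes identifying $\ker(\beta\otimes1)$ with $\ker\beta\otimes\mathcal{T}$, which needs exactness of $\mathcal{T}$ or $\mathcal{K}$. It also needs the fact that "1-surjective" (one leg surjective) is exactly the hypothesis that makes the lifting in the Pedersen condition work.
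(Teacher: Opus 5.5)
Your proposal is correct and follows the same route as the paper. Both arguments use the functoriality of $\Sigma^2$, then the fact that tensoring with the exact (indeed nuclear) algebra $\mathcal{T}$ preserves a pullback with a surjective leg, and then restrict to the subalgebra defining $\Sigma^2$; your decomposition $\Sigma^2A=A\otimes\mathcal{K}+1\otimes\mathcal{T}$ with the symbol-matching and lifting argument is exactly the ``simple checking'' the paper leaves implicit, where the paper phrases the key input as $\ker\Sigma^2\mu=\ker\mu\otimes\mathcal{K}$.
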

\begin{proof}
    If $\mu : A \xrightarrow[]{} B$ is a unital $*$-homomorphism between unital $C^*$-algebras, then the induced map between the quantum double suspensions is given by $\Sigma^2\mu : \Sigma^2A \xrightarrow[]{} \Sigma^2B$: $\Sigma^2\mu(a \otimes p) = \mu(a) \otimes p$ and $\Sigma^2 (1 \otimes s) = 1 \otimes s$. Due to \cite{saurabh_cso_q2n1so_q2n-1_2026}, $ker \Sigma^2 \mu = ker \mu \otimes \mathcal{K}$. $\mathcal{T}$ is an exact $C^*$-algebra, so tensoring with it preserves pullbacks \cite{pedersen_pullback_1999}. The result follows from simple checking the *-subalgebra defining the quantum double suspension also preserves the pullback.
\end{proof}
\begin{lemma} 
We have the following pullback diagram

\[\begin{tikzcd}[ampersand replacement=\&]
	{C(B_q^{2n+1})} \& {\mathcal{T}^n\otimes C[-1,1]} \\
	{C(S_q^{2n})} \& {C(S_H^{2n})}
	\arrow[hook, from=1-1, to=1-2]
	\arrow[two heads, from=1-1, to=2-1]
	\arrow[two heads, from=1-2, to=2-2]
	\arrow[hook, from=2-1, to=2-2]
\end{tikzcd}\]
\end{lemma}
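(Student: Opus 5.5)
We will argue by induction on $n$, propagating the pullback through the quantum double suspension via Proposition~\ref{qdsp}. The ingredients are the identities $C(B_q^{2n+1})=\Sigma^2 C(B_q^{2n-1})$ and $C(S_q^{2n})=\Sigma^2 C(S_q^{2n-2})$ from the Remark after the definition of the quantum double suspension. We also need that $\mathcal{T}^{n}\otimes C[-1,1]$ and $C(S_H^{2n})$ sit in the diagram compatibly with tensoring by $\mathcal{T}$. We will, however, check the three Pedersen conditions directly rather than rely on the inductive structure alone, since the right-hand column does not literally arise as a double suspension.

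\textbf{Step 1: the maps.} The vertical maps are the quotient $\alpha\colon C(B_q^{2n+1})\twoheadrightarrow C(S_q^{2n})$ of Proposition~\ref{hsker} and the quotient $\mathcal{T}^n\otimes C[-1,1]\twoheadrightarrow C(S_H^{2n})$ by $I$. For the horizontal inclusion $C(B_q^{2n+1})\hookrightarrow \mathcal{T}^n\otimes C[-1,1]$ we realise $C(B_q^{2n+1})$ as the iterated double suspension $\Sigma^{2}\cdots\Sigma^2 C[-1,1]$. By the definition of $\Sigma^2$, this is a subalgebra of $C[-1,1]\otimes\mathcal{T}^{\otimes n}$, generated by the elements $1\otimes s$ in each new tensor factor together with the projections $p$. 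Similarly, $C(S_q^{2n})=\Sigma^{2n}C(\{-1,1\})$ embeds in $C(\{-1,1\})\otimes\mathcal{T}^{\otimes n}$. We map $C(\{-1,1\})\otimes \mathcal{T}^{\otimes n}$ to $C(S_H^{2n})$ using the presentation $C(S_H^{2n})\cong C^*(s_0,\dots,s_n)$: send $t_i\mapsto s_i$, send $z$ to $s_n$, and use that $\prod(1-s_is_s^*)\,(1-s_n^2)=0$ forces $s_n^2=1$ on the "corner" $\prod(1-s_is_i^*)$. Commutativity of the square then holds on generators.

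\textbf{Step 2: Pedersen conditions.} Writing $j\colon C(B_q^{2n+1})\hookrightarrow \mathcal{T}^n\otimes C[-1,1]$ and $\pi$ for the right vertical quotient, three conditions must be verified.
\begin{enumerate}
\item $\ker\alpha\cap\ker j=0$ is automatic because $j$ is injective.
\item The second condition holds because $\pi$ is surjective.
\item It remains to show $j(\ker\alpha)=\ker\pi=I$.
\end{enumerate}
By Proposition~\ref{hsker}, $\ker\alpha\cong\mathcal{K}\otimes C_0(-1,1)$, generated by $t^0(f)$ with $f\in C_0(-1,1)$. Under the double suspension picture, $t^0(f)$ on the vertex interval corresponds to $f\otimes p\otimes\cdots\otimes p$, with $p=1-ss^*$ in each Toeplitz factor. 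Meanwhile $I$ is generated by $\prod_i(1-t_it_i^*)(1-z^2)$, which is exactly $(1-z^2)\otimes p^{\otimes n}$. The ideal in $\mathcal{T}^{\otimes n}\otimes C[-1,1]$ generated by $p^{\otimes n}\otimes C_0(-1,1)$ equals $\mathcal{K}^{\otimes n}\otimes C_0(-1,1)\cong \mathcal{K}\otimes C_0(-1,1)$. That ideal already lies in the image of $j$, because matrix units of $\mathcal{K}$ in each factor are $s^kp s^{*l}$, and these lie in the double suspension. Hence $j(\ker\alpha)=I$, using $\ker\Sigma^2\mu=\ker\mu\otimes\mathcal{K}$ as in the proof of Proposition~\ref{qdsp}.

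\textbf{Main obstacle.} The delicate part is identifying $\ker\pi$ with the full $\mathcal{K}\otimes C_0(-1,1)$ and confirming the bottom map is injective. Injectivity of $C(S_q^{2n})\to C(S_H^{2n})$ should follow once $j(\ker\alpha)=\ker\pi$ and $j^{-1}(\ker\pi)=\ker\alpha$ are established. The second equality needs that $j(C(B_q^{2n+1}))\cap I\subseteq j(\ker\alpha)$, which is immediate since $I\subseteq j(\ker\alpha)$ by the computation above. So the real work is the concrete identification of $t^0(C_0(-1,1))$ with $C_0(-1,1)\otimes p^{\otimes n}$. We intend to obtain this from the topological graph presentation via Cuntz--Krieger uniqueness, mapping vertex $v_i$ to $1\otimes p^{\otimes (i-1)}\otimes 1\cdots$ appropriately.
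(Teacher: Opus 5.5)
Your proof is correct, but it takes a different route from the paper's. The paper inducts on $n$. It factors the square through the intermediate column $\Sigma^2(\mathcal{T}^{n-1}\otimes C[-1,1])\twoheadrightarrow\Sigma^2 C(S_H^{2n-2})$ and gets the left square from the induction hypothesis plus Proposition~\ref{qdsp}. It then checks the right square via $\ker\Sigma^2\alpha=\mathcal{K}\otimes\ker\alpha$ and concludes with the pasting lemma.

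Despite announcing an induction, your Step~2 never uses one. You verify Pedersen's conditions on the outer square directly. The only substantive condition is $j(\ker\alpha)=I$, which you obtain by recognising both sides as $C_0(-1,1)\otimes\mathcal{K}^{\otimes n}$ and noting that this ideal of $\mathcal{T}^{\otimes n}\otimes C[-1,1]$ already lies in the iterated double suspension. This is an instance of a general fact: if $X\subseteq A$ and $J\subseteq X$ is an ideal of $A$, then $X\hookrightarrow A$, the two quotient maps and $X/J\hookrightarrow A/J$ form a pullback.

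Your version is shorter and avoids both Proposition~\ref{qdsp} (whose proof in the paper is only sketched) and the pasting lemma. The paper's right-square step is already this same kernel computation, so its inductive left square is not really needed. In exchange, the paper's route produces the intermediate pullback squares and stays inside the double-suspension formalism.

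A few details to repair:
\begin{enumerate}
\item The bottom map cannot be defined on all of $C(\{-1,1\})\otimes\mathcal{T}^{\otimes n}$ by $z\mapsto s_n$, since $z^2=1$ there while $s_n^2\neq 1$ in $C(S_H^{2n})$. Define it instead as the map induced on quotients, $\mu(\alpha(x)):=\pi(j(x))$. This is well defined once $j(\ker\alpha)\subseteq I$, commutes by construction, and is injective by your observation $j^{-1}(I)=\ker\alpha$.
\item Condition (2) also needs surjectivity of $\alpha$, not only of $\pi$.
\item Matrix units in a single Toeplitz factor do not lie in the iterated suspension; for example, $1\otimes s\otimes 1\notin\Sigma^2\Sigma^2A$. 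What you actually need does lie there: elementary tensors with a compact in every Toeplitz slot. This follows because $A\otimes\mathcal{K}\subseteq\Sigma^2A$ iterates to $C[-1,1]\otimes\mathcal{K}^{\otimes n}\subseteq C(B_q^{2n+1})$.
\item The Cuntz--Krieger identification of $t^0(C_0(-1,1))$ can be skipped. Take $\alpha$ to be $\Sigma^{2n}$ of the restriction $C[-1,1]\to C(\{-1,1\})$, as the paper implicitly does. Iterating $\ker\Sigma^2\mu=\ker\mu\otimes\mathcal{K}$ then gives $\ker\alpha=C_0(-1,1)\otimes\mathcal{K}^{\otimes n}$ at once.
\end{enumerate}
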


\begin{proof}
For n = 0, this is clearly a pullback diagram as $C(B_q^1) = C[-1,1]$.

For higher dimension we proceed using induction. Consider the following diagram:

\[\begin{tikzcd}[ampersand replacement=\&]
	{C(B_q^{2n+1})} \& {\Sigma^2(\mathcal{T}^{n-1}\otimes C[-1,1])} \& {\mathcal{T}^n\otimes C[-1,1]} \\
	{C(S_q^{2n})} \& {\Sigma^2(C(S_H^{2n-2}))} \& {C(S_H^{2n})}
	\arrow[hook, from=1-1, to=1-2]
	\arrow[two heads, from=1-1, to=2-1]
	\arrow[hook, from=1-2, to=1-3]
	\arrow[two heads, from=1-2, to=2-2]
	\arrow[two heads, from=1-3, to=2-3]
	\arrow[hook, from=2-1, to=2-2]
	\arrow[hook, from=2-2, to=2-3]
\end{tikzcd}\]

The induction hypothesis is that the following diagram is a pullback:

\[\begin{tikzcd}[ampersand replacement=\&]
	{C(B_q^{2n-1})} \& {\mathcal{T}^{n-1}\otimes C[-1,1]} \\
	{C(S_q^{2n-2})} \& {C(S_H^{2n-2})}
	\arrow[hook, from=1-1, to=1-2]
	\arrow[two heads, from=1-1, to=2-1]
	\arrow[two heads, from=1-2, to=2-2]
	\arrow[hook, from=2-1, to=2-2]
\end{tikzcd}\]

So by Proposition \ref{qdsp}, the following is also a pullback:

\[\begin{tikzcd}[ampersand replacement=\&]
	{\Sigma^2C(B_q^{2n-1})} \& {\Sigma^2(\mathcal{T}^{n-1}\otimes C[-1,1])} \\
	{\Sigma^2C(S_q^{2n-2})} \& {\Sigma^2(C(S_H^{2n-2}))}
	\arrow[hook, from=1-1, to=1-2]
	\arrow[two heads, from=1-1, to=2-1]
	\arrow[two heads, from=1-2, to=2-2]
	\arrow[hook, from=2-1, to=2-2]
\end{tikzcd}\]
which gives us that the left square is a pullback.

We know that the following is a short exact sequence:

\[\begin{tikzcd}[ampersand replacement=\&]
	0 \& {\mathcal{K}\otimes C_0(-1,1)} \& {\mathcal{T}^{n-1}\otimes C[-1,1]} \& {C(S_H^{2n-2})} \& 0
	\arrow[from=1-1, to=1-2]
	\arrow[hook, from=1-2, to=1-3]
	\arrow["\alpha", two heads, from=1-3, to=1-4]
	\arrow[from=1-4, to=1-5]
\end{tikzcd}\]

So the induced map on the quantum double suspensions is $\Sigma^2\alpha : \Sigma^2(\mathcal{T}^{n-1} \otimes C[-1,1] \xrightarrow[]{} \Sigma^2C(S_H^{2n-2})$ is $\Sigma^2\alpha(t_0 \otimes 1) = t_0 \otimes 1$ and $\Sigma^2\alpha((1-t_0t_0^*) \otimes a) = (1-t_0t_0^*) \otimes \alpha(a)$. By \cite{saurabh_cso_q2n1so_q2n-1_2026} $ker \Sigma^2 \alpha = \mathcal{K} \otimes ker\alpha$. Clearly, we have that $\iota(ker\alpha) = ker\Sigma^2\alpha$, so the right square is pullback, consequently, by pasting lemma, the outer square is also a pullback.
\end{proof}
\begin{proposition}
 
The inclusion $\iota : C(B_q^{2n+1}) \xhookrightarrow{} \mathcal{T}^n \otimes C[-1,1]$ induces an isomorphism on K-theory.
\end{proposition}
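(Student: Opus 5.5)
We prove that $\iota_*$ is an isomorphism by comparing the two six-term exact sequences coming from the pullback square of the preceding lemma. Since the square is a pullback with surjective vertical maps, $\iota$ restricts to an isomorphism between the kernels of the two vertical surjections. By Proposition \ref{hsker}, the kernel of $C(B_q^{2n+1})\twoheadrightarrow C(S_q^{2n})$ is $\mathcal{K}\otimes C_0(-1,1)$. By the proof of the presentation of $C(S_H^{2n})$, the kernel of $\mathcal{T}^n\otimes C[-1,1]\twoheadrightarrow C(S_H^{2n})$ is the ideal generated by $\prod(1-t_it_i^*)(1-z^2)$, which is also $\mathcal{K}\otimes C_0(-1,1)$. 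We therefore obtain a morphism of short exact sequences
\[
0\to \mathcal{K}\otimes C_0(-1,1)\to C(B_q^{2n+1})\to C(S_q^{2n})\to 0
\]
into
\[
0\to \mathcal{K}\otimes C_0(-1,1)\to \mathcal{T}^n\otimes C[-1,1]\to C(S_H^{2n})\to 0,
\]
whose leftmost component is an isomorphism. This gives a commutative ladder of six-term exact sequences in $K$-theory, and $\iota_*$ is the middle vertical map.

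Rather than running a five-lemma argument, which would require knowing that the induced map $C(S_q^{2n})\to C(S_H^{2n})$ is a $K$-isomorphism, we compute both middle terms directly and check that a generator goes to a generator. For $\mathcal{T}^n\otimes C[-1,1]$, Künneth (everything is type I, in the bootstrap class) together with $K_*(\mathcal{T})=K_*(\mathbb{C})$ and homotopy invariance ($C[-1,1]\simeq\mathbb{C}$) gives $K_0=\mathbb{Z}[1]$ and $K_1=0$. For $C(B_q^{2n+1})$, either quantum double suspension invariance of $K$-theory or the topological graph six-term sequence applied to the graph of the proposition describing $C(B_q^{2n+1})$ (whose vertex space $\{v_1,\dots,v_n\}\sqcup[-1,1]$ is contractible in each component) gives $K_0=\mathbb{Z}[1]$ and $K_1=0$. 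This matches the requirement that $C(B_q^{2n+1})$ be a $K$-ball. Since $\iota$ is unital, it sends $[1]$ to $[1]$, so $\iota_*$ is an isomorphism on $K_0$; $K_1$ vanishes on both sides, so $\iota_*$ is trivially an isomorphism there.

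The main obstacle is justifying $K_*(C(B_q^{2n+1}))\cong(\mathbb{Z},0)$ with generator $[1]$. I would argue inductively. Since $C(B_q^{2n+1})=\Sigma^2 C(B_q^{2n-1})$ and $C(B_q^1)=C[-1,1]$, it suffices to use the exact sequence
\[
0\to A\otimes\mathcal{K}\to\Sigma^2A\to C(S^1)\to 0,
\]
where the index map sends $[u]\in K_1(C(S^1))$ to $-[1_A]\in K_0(A)$. For $A$ with $K_*(A)=\mathbb{Z}[1_A]$, this index map kills $K_1(C(S^1))$ and gives $K_0(\Sigma^2A)=\mathbb{Z}[1]$ and $K_1(\Sigma^2A)=0$. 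If this identification of the index map is inconvenient, the fallback is Katsura's six-term sequence for topological graph algebras: compute the map $K_*(C_0(E^0_{rg}))\to K_*(C_0(E^0))$, which here is $\mathrm{id}-[r][s]^{-1}$ on $\mathbb{Z}^n\to\mathbb{Z}^{n+1}$. The triangular edge structure $r(e_{ij})=v_j$ with $i\le j$ makes this matrix injective with cokernel $\mathbb{Z}$, and the class of the unit survives in the cokernel.
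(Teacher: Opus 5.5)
Your proposal is correct and is essentially the paper's argument. Both compute $K_*(\mathcal{T}^n\otimes C[-1,1])\cong(\mathbb{Z}[1],0)$ via the K\"unneth formula and $K_*(C(B_q^{2n+1}))\cong(\mathbb{Z}[1],0)$, and then conclude because $\iota$ is unital; the paper cites Hong--Szyma\'nski for the $K$-theory of the ball where you derive it correctly by induction through the quantum double suspension extension (or Katsura's six-term sequence), and your opening ladder of six-term sequences is never used and can be dropped.
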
 

\begin{proof}

The $K$-theory of $\mathcal{T}$ is free and abelian, so by Kunneth formula for $C^*$-algebras \cite[Theorem 2.14]{schochet_topological_1982}, we have the isomorphism $K_{*}(\mathcal{T} \otimes C[-1,1]) \cong K_{*}(\mathcal{T}) \otimes_{\mathbb{Z}} K_*(C[-1,1])$ and consequently we have $K_0(\mathcal{T}^n \otimes C[-1,1]) \cong \mathbb{Z}$ and $K_1(\mathcal{T}^n \otimes C[-1,1]) \cong \{0\}$ and is generated by class of identity i.e. $[1]_0$. Furthermore, the $K$-theory of $C(B_q^{2n+1})$ is given by $K_0(C(B_q^{2n+1}) \cong \mathbb{Z}$ generated by the class of identity \cite[Proposition 4.1]{hong_noncommutative_2008} and $K_1(C(B_q^{2n+1})) = \{0\}$. The inclusion $C(B_q^{2n+1}) \xhookrightarrow{} \mathcal{T}^n \otimes C[-1,1]$ is a unital $*$-homomorphism, so it induces an isomorphism on $K$-theory.
\end{proof}
\begin{proposition}
The inclusion $ \mu: C(S_q^{2n}) \xhookrightarrow{} C(S_H^{2n})$ also induces an isomorphism on $K$-theory.
\end{proposition}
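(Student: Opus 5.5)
The plan is to deduce this from the pullback diagram of the preceding lemma, the previous proposition, and the six-term (Mayer--Vietoris) exact sequence of a pullback of $C^*$-algebras with one surjective leg. In that square the maps $C(B_q^{2n+1})\twoheadrightarrow C(S_q^{2n})$ and $\mathcal{T}^n\otimes C[-1,1]\twoheadrightarrow C(S_H^{2n})$ are surjective, and the horizontal maps are inclusions.

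The first step is to check that these two surjections have the same kernel, namely $\mathcal{K}\otimes C_0(-1,1)$. For the left column this is Proposition \ref{hsker}. For the right column it is the kernel of the quotient map defining $C(S_H^{2n})$, identified as $\mathcal{K}\otimes C_0(-1,1)$ in the proof of the presentation of $C(S_H^{2n})$. Because the square is a pullback, the inclusion $\iota$ restricts to an isomorphism between these kernels (this is the condition $\delta(\ker\gamma)=\ker\alpha$ in the Pedersen characterisation). This gives a morphism of short exact sequences
\[
\begin{tikzcd}[ampersand replacement=\&]
0 \& {\mathcal{K}\otimes C_0(-1,1)} \& {C(B_q^{2n+1})} \& {C(S_q^{2n})} \& 0\\
0 \& {\mathcal{K}\otimes C_0(-1,1)} \& {\mathcal{T}^n\otimes C[-1,1]} \& {C(S_H^{2n})} \& 0
\arrow[from=1-1,to=1-2]\arrow[from=1-2,to=1-3]\arrow[from=1-3,to=1-4]\arrow[from=1-4,to=1-5]
\arrow[from=2-1,to=2-2]\arrow[from=2-2,to=2-3]\arrow[from=2-3,to=2-4]\arrow[from=2-4,to=2-5]
\arrow["\cong", from=1-2,to=2-2]\arrow["\iota", from=1-3,to=2-3]\arrow["\mu", from=1-4,to=2-4]
\end{tikzcd}
\]

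The second step applies naturality of the six-term exact sequence in $K$-theory to obtain a commutative ladder of two cyclic six-term sequences. The vertical maps on the ideal terms are isomorphisms, since the ideal map is an isomorphism. The vertical maps on the middle terms are isomorphisms by the preceding proposition on $\iota$. The five lemma, applied around the cyclic ladder, then shows that $\mu_*:K_*(C(S_q^{2n}))\to K_*(C(S_H^{2n}))$ is an isomorphism in both degrees. As a sanity check, both groups should come out as $K_0\cong\mathbb{Z}^2$ and $K_1=0$. This matches the known $K$-theory of $C(S_q^{2n})$ from its graph presentation, and follows from $K_*(\mathcal{K}\otimes C_0(-1,1))=(0,\mathbb{Z})$ together with $K_*(\mathcal{T}^n\otimes C[-1,1])=(\mathbb{Z},0)$.

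The main obstacle is the first step: justifying that $\iota$ maps $\ker\alpha$ onto the kernel of the right-hand quotient, and not merely into it. The commutativity of the square only gives inclusion. For the reverse inclusion I would use the pullback property of the preceding lemma directly. Suppose $y$ lies in the right-hand kernel. Then the pair $(y,0)$ is compatible over $C(S_H^{2n})$, so it comes from a unique $x\in C(B_q^{2n+1})$ with $\iota(x)=y$ and $x\in\ker\alpha$. If one prefers to avoid that lemma, a fallback is to compare the generators. The element $\prod_i(1-t_it_i^*)(1-z^2)$ should be the image of $t^0$ of a function in $C_0(-1,1)$ supported at the final vertex, under the iterated double-suspension description of $\iota$. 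One then shows that the ideal it generates equals $\iota$ of the ideal generated by $t^0(C_0(-1,1))$, using that both are $\mathcal{K}\otimes C_0(-1,1)$ and that the inclusion is nondegenerate on the compact-operator part.
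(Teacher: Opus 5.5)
Your proposal is correct and follows essentially the same route as the paper: the morphism of extensions over $\mathcal{K}\otimes C_0(-1,1)$, naturality of the six-term sequence, the preceding proposition on $\iota$, and a five-lemma argument (the paper first computes the groups and then applies the short five lemma to $0\to\mathbb{Z}\to K_0(C(S_H^{2n}))\to\mathbb{Z}\to 0$, whereas you apply the full five lemma directly to the cyclic ladder). Your use of the pullback lemma to show that $\iota$ maps $\ker\alpha$ onto the right-hand kernel is a justification the paper omits; it simply labels the ideal map as the identity.
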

\begin{proof}
To begin with have the inclusion map $C(B_q^{2n+1}) \xhookrightarrow{} \mathcal{T}^n \otimes C[-1,1]$ and two short exact sequences:

\[\begin{tikzcd}
	0 & {\mathcal{K}\otimes C_0(-1,1)} & {C(B_q^{2n+1})} & {C(S_q^{2n})} & 0 \\
	0 & {\mathcal{K}\otimes C_0(-1,1)} & {\mathcal{T}^n\otimes C[-1,1]} & {C(S_H^{2n})} & 0
	\arrow[from=1-1, to=1-2]
	\arrow[from=1-2, to=1-3]
	\arrow["id", from=1-2, to=2-2]
	\arrow[from=1-3, to=1-4]
	\arrow[hook, from=1-3, to=2-3]
	\arrow[from=1-4, to=1-5]
	\arrow[hook, from=1-4, to=2-4]
	\arrow[from=2-1, to=2-2]
	\arrow[from=2-2, to=2-3]
	\arrow[from=2-3, to=2-4]
	\arrow[from=2-4, to=2-5]
\end{tikzcd}\]

So we have the following induced maps between the six term sequences in $K$-theory

\[\begin{tikzcd}[ampersand replacement=\&,cramped,sep=tiny]
	\&\& {K_0(\mathcal{K}\otimes C_0(-1,1))} \&\& \\
	\&\& {K_0(\mathcal{K}\otimes C_0(-1,1))} \\
	{K_0(\mathcal{T}^n\otimes C[-1,1])} \& {K_0(C(B_q^{2n+1}))} \&\& {K_1(C(S_q^{2n}))} \& {K_1(C(S^{2n}_H)} \\
	{K_0(C(S_H^{2n}))} \& {K_0(C(S_q^{2n}))} \&\& {K_1(C(B_q^{2n+1}))} \& {K_1(\mathcal{T}^n \otimes C[-1,1])} \\
	\&\& {K_1(\mathcal{K}\otimes C_0(-1,1))} \\
	\\
	\&\& {K_1(\mathcal{K} \otimes C_0(-1,1))}
	\arrow[from=1-3, to=3-1]
	\arrow[from=2-3, to=1-3]
	\arrow[from=2-3, to=3-2]
	\arrow[from=3-1, to=4-1]
	\arrow[from=3-2, to=3-1]
	\arrow[from=3-2, to=4-2]
	\arrow[from=3-4, to=2-3]
	\arrow[from=3-4, to=3-5]
	\arrow[from=3-5, to=1-3]
	\arrow[from=4-1, to=7-3]
	\arrow[from=4-2, to=4-1]
	\arrow[from=4-2, to=5-3]
	\arrow[from=4-4, to=3-4]
	\arrow[from=4-4, to=4-5]
	\arrow[from=4-5, to=3-5]
	\arrow[from=5-3, to=4-4]
	\arrow[from=5-3, to=7-3]
	\arrow[from=7-3, to=4-5]
\end{tikzcd}\]
which becomes

\[\begin{tikzcd}
	& 0 &&&& 0 & \\
	{\mathbb{Z}} && 0 && {\mathbb{Z}} && {K_1(C(S_H^{2n}))} \\
	{\mathbb{Z} \oplus \mathbb{Z}} && 0 && {K_0(C(S_H^{2n}))} && 0 \\
	& {\mathbb{Z}} &&&& {\mathbb{Z}}
	\arrow[from=1-2, to=1-6]
	\arrow[from=1-2, to=2-1]
	\arrow[from=1-6, to=2-5]
	\arrow[curve={height=12pt}, from=2-1, to=2-5]
	\arrow[from=2-1, to=3-1]
	\arrow[from=2-3, to=1-2]
	\arrow[curve={height=-12pt}, from=2-3, to=2-7]
	\arrow[from=2-5, to=3-5]
	\arrow[from=2-7, to=1-6]
	\arrow[curve={height=-12pt}, from=3-1, to=3-5]
	\arrow[from=3-1, to=4-2]
	\arrow[from=3-3, to=2-3]
	\arrow[curve={height=12pt}, from=3-3, to=3-7]
	\arrow[from=3-5, to=4-6]
	\arrow[from=3-7, to=2-7]
	\arrow[from=4-2, to=3-3]
	\arrow[from=4-2, to=4-6]
	\arrow[from=4-6, to=3-7]
\end{tikzcd}\]

Clearly, we have that $K_1(C(S_H^{2n})) = 0$, so $\mu_1$ is an isomorphism. Moreover, we have the following:

\[\begin{tikzcd}
	0 & {\mathbb{Z}} & {\mathbb{Z} \oplus \mathbb{Z}} & {\mathbb{Z}} & 0 \\
	0 & {\mathbb{Z}} & {K_0(C(S_H^{2n}))} & {\mathbb{Z}} & 0
	\arrow[from=1-1, to=1-2]
	\arrow[from=1-2, to=1-3]
	\arrow["\cong", from=1-2, to=2-2]
	\arrow[from=1-3, to=1-4]
	\arrow[from=1-3, to=2-3]
	\arrow[from=1-4, to=1-5]
	\arrow["\cong", from=1-4, to=2-4]
	\arrow[from=2-1, to=2-2]
	\arrow[from=2-2, to=2-3]
	\arrow[from=2-3, to=2-4]
	\arrow[from=2-4, to=2-5]
\end{tikzcd}\]

The left and right arrows are isomorphisms, so by Short Five Lemma we have that $\mu_0$ is also an isomorphism.
\end{proof}
\begin{remark}
Note that the last line of the proof also says that $\mathcal{T}^n\otimes C[-1,1]$ is $K$-ball and $C(S^{2n}_H)$ is its boundary $K$-sphere.
\end{remark}
\begin{proposition}
Let $A_i$ and $A_{ij}$ where $i,j \in \{1,2,3\}$ be finite families of $C^*$-algebras with the following $*$-homomorphisms from a $C^*$-algebra $X$: 

$\pi_i : X \xrightarrow[]{} A_i$

$\pi_{j}^i : A_i \xrightarrow[]{} A_{ij}$ 

which form a star-shaped commutative diagram centred at 2 as below:

\[\begin{tikzcd}[ampersand replacement=\&]
	\&\& X \&\& \\
	{A_1} \&\& {A_2} \&\& {A_3} \\
	\& {A_{12}} \&\& {A_{23}}
	\arrow[from=1-3, to=2-1]
	\arrow[from=1-3, to=2-3]
	\arrow[from=1-3, to=2-5]
	\arrow[from=2-1, to=3-2]
	\arrow[from=2-3, to=3-2]
	\arrow[from=2-3, to=3-4]
	\arrow[from=2-5, to=3-4]
\end{tikzcd}\]

then this is a multipullback diagram of $C^*$-algebras i.e. the induced map $\sigma : X \xrightarrow[]{} \bigoplus_{ij} A_i$ given by $\sigma(x) = (\pi_i(x))_i$ is an isomorphism iff

the following hold:
\begin{enumerate}
    \item $\bigcap_{j \in I} ker\pi_j = \{0\}$
    \item $\pi_2(X) = \bigcap_{j \neq 2} (\pi_j^2)^{-1}(\pi_2^j(A_j))$
    \item $ker \pi_2^k = \pi_k(\bigcap_{p \neq k} ker \pi_p)$ for all $k \neq 2$
\end{enumerate}
\label{multi}
\end{proposition}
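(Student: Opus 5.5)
We prove the statement by reducing it to the classical two-legged Pedersen criterion, applied twice along the star. First, since $\{1,2\}$ and $\{2,3\}$ are the only edges of the diagram, the multipullback is $\bigoplus_{ij}A_i=\{(a_1,a_2,a_3)\ \vert\ \pi_2^1(a_1)=\pi_1^2(a_2),\ \pi_3^2(a_2)=\pi_2^3(a_3)\}$. This can be written as an iterated pullback: form $P:=A_2\oplus_{A_{12}}A_1$, then $\bigoplus_{ij}A_i\cong P\oplus_{A_{23}}A_3$, where $P\to A_{23}$ is $(a_2,a_1)\mapsto\pi_3^2(a_2)$. With this description, $\sigma$ is well defined (by commutativity) and is a $*$-homomorphism. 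What remains is to show that $\sigma$ is injective if and only if (1) holds, and that it is surjective if and only if (2) and (3) hold.

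\textbf{Injectivity.} We have $\ker\sigma=\bigcap_j\ker\pi_j$ directly, so injectivity is equivalent to (1).

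\textbf{Surjectivity, sufficiency.} Assume (1)--(3) and take a compatible triple $(a_1,a_2,a_3)$. The argument has three steps.
\begin{enumerate}
\item Since $\pi_2^j(a_j)=\pi_j^2(a_2)$, we have $a_2\in\bigcap_{j\neq 2}(\pi_j^2)^{-1}(\pi_2^j(A_j))$. By (2), there is $x\in X$ with $\pi_2(x)=a_2$.
\item Put $b_k:=a_k-\pi_k(x)$ for $k=1,3$. By commutativity of the diagram, $\pi_2^k(b_k)=\pi_k^2(a_2)-\pi_k^2(\pi_2(x))=0$, so $b_k\in\ker\pi_2^k$.
\item By (3), there are $y_k\in\bigcap_{p\neq k}\ker\pi_p$ with $\pi_k(y_k)=b_k$. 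Then $x+y_1+y_3$ maps to $(a_1,a_2,a_3)$, because $y_1$ is killed by $\pi_2,\pi_3$ and $y_3$ is killed by $\pi_1,\pi_2$.
\end{enumerate}

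\textbf{Surjectivity, necessity.} Assume $\sigma$ is surjective.
\begin{enumerate}
\item For (2): the inclusion $\subseteq$ follows from commutativity. For $\supseteq$, take $a_2$ in the right-hand side and choose $a_1,a_3$ with $\pi_2^j(a_j)=\pi_j^2(a_2)$. This gives a compatible triple, whose preimage under $\sigma$ yields $x$ with $\pi_2(x)=a_2$.
\item For (3): the inclusion $\supseteq$ again follows from commutativity, since $\pi_2^k\pi_k(y)=\pi_k^2\pi_2(y)=0$. For $\subseteq$, take $b\in\ker\pi_2^k$ and consider the triple with $b$ in slot $k$ and $0$ elsewhere. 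It is compatible precisely because the star has no edge $\{1,3\}$, and its preimage lies in $\bigcap_{p\neq k}\ker\pi_p$.
\end{enumerate}

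The main obstacle is bookkeeping rather than analysis. The star shape, with no $\{1,3\}$ compatibility, is exactly what makes the zero-padded triple compatible and lets the two corrections $y_1,y_3$ be added independently; the proof must use it at those points. Condition (3) must also be read with the intersection taken over all $p\neq k$ (including $p=2$), so that $y_k$ does not disturb the already fixed coordinates.
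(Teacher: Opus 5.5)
Your proof is correct and matches the paper's argument step for step. Injectivity comes from (1); for sufficiency you lift $a_2$ via (2) and correct the outer coordinates via (3); for necessity you use surjectivity on the compatible and zero-padded triples. The iterated-pullback description $P\oplus_{A_{23}}A_3$ you open with is never actually used, so you should drop the claim that you are reducing to the two-legged Pedersen criterion.
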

\begin{proof}
   
This proof is heavily based on the original proof by Pedersen in \cite[Proposition 3.1]{pedersen_pullback_1999} for a pullback of $C^*$-algebras, and has been adapted here to the case of a multipullback of $C^*$-algebras.

First let us assume the three conditions hold.

Then $\sigma : X \xrightarrow[]{} \bigoplus_{ij} A_i$ maps $x \mapsto (\pi_i(x))_i$. 

Say we have a $x \in X$ such that $\sigma(x) = 0$, then $\pi_i(x) = 0$ for all $i$, then $x$ is in $\bigcap_{j \in I} ker\pi_j$ which is the singleton $\{0\}$, so we have that $\sigma$ is injective.

For surjectivity,

say $(a_i)_{i\in I}$, then,

\begin{align*}
    a_2 &\in (\pi_j^2)^{-1} (\pi_2^j(a_j)) \\
    a_2 &\in (\pi_j^2)^{-1} (\pi_2^j(A_j))
\end{align*}
So by (2) we have that $a_2 = \pi_2(x)$ for some $x \in X$.

Now consider $k \neq 2$.

\begin{align*}
    \pi_2^k(a_k - \pi_k(x)) &= \pi_2^k(a_k) - \pi_2^k(\pi_k(x)) \\
                            &= \pi_k^2(a_2) - \pi_k^2(\pi_2(x)) \\
                            &= 0
\end{align*}

We have that
\begin{align*}
    a_k - \pi_k(x) &\in ker\pi_2^k \\
    a_k - \pi_k(x) &= \pi_k(y_k) \hspace{2cm} \text{ for some } y_k \in \bigcap_{p \neq k} ker \pi_p
\end{align*}

Clearly, we have that $\sigma( x + \Sigma_k y_k) = (a_i)_i$ and we conclude that $\sigma$ is surjective.

For the implication in the other direction:

Let $x \in \bigcap_{j \in I} ker \pi_j$, then $\sigma(x) = 0$, but as $\sigma$ is injective, we have that $x = 0$ and we get that (1) is satisfied.
Let $\sigma$ be surjective, then to see that (2) holds, we observe that,

if $a_2 = \pi_2(x)$ for some $x \in X$, then $\pi_j^2(a_2) = \pi_2^j(\pi_j(x)) \implies a_2 \in (\pi_j^2)^{-1}(\pi_2^j(A_j))$ and if $a_2 \in (\pi_j^2)^{-1}(\pi_2^j(A_j))$, then $(a_i)_i \in \oplus_{ij} A_i$, so by surjectivity of $\sigma$ we have that $a_2 = \pi_2(x)$ for some $x \in X$.

Now if
\begin{align*}
     &a_k \in ker \pi_2^k \\
    \implies &\pi_2^k(a_k) = 0 \\
    \implies &(a_k,0,0) \in \oplus_{ij}A_i \\
    \implies &\exists x \in X \ni \pi_k(x) = a_k \text{ and } \pi_p(x) = 0 \ \forall \ p \neq x
\end{align*}

The other inequality for (3) follows easily.
\end{proof}

\begin{remark}
The conditions given in the proposition are sufficient for a general multipullback diagram, i.e., a diagram with all $A_{ij}$'s. However, the conditions are not necessary in this case and we need the star shaped diagram for the necessity part. Furthermore, imposing the cocycle condition \cite[Defintion 6]{hajac_cocycle_2012} resolves the necessity of condition (2), however, it doesn't imply the necessity of condition (3). 
\end{remark}

\begin{theorem}
The $C^*$-algebra $C(S_H^{2n})$ is isomorphic to the wide pullback $\sigma_{-1} : \mathcal{T}^n \xrightarrow[]{} C(S^{2n-1}_H)$, $\sigma_{1} : \mathcal{T}^n \xrightarrow[]{} C(S^{2n-1}_H)$ and $ev_{-1} : C(S_H^{2n-1})\otimes C[-1,1] \xrightarrow[]{} C(S^{2n-1}_H)$ and $ev_{1} : C(S_H^{2n-1})\otimes C[-1,1] \xrightarrow[]{} C(S^{2n-1}_H)$ in the following manner:
\[\begin{tikzcd}[ampersand replacement=\&]
	\&\& {C(S_H^{2n})} \&\& \\
	{\mathcal{T}^n} \&\& {C(S_H^{2n-1}) \otimes C[-1,1]} \&\& {\mathcal{T}^n} \\
	\& {C(S^{2n-1}_H)} \&\& {C(S^{2n-1}_H)}
	\arrow["{\pi_1}"', two heads, from=1-3, to=2-1]
	\arrow["{\pi_2}", two heads, from=1-3, to=2-3]
	\arrow["{\pi_3}", two heads, from=1-3, to=2-5]
	\arrow["{\sigma_{-1}}", two heads, from=2-1, to=3-2]
	\arrow["{ev_{-1}}"', two heads, from=2-3, to=3-2]
	\arrow["{ev_1}", two heads, from=2-3, to=3-4]
	\arrow["{\sigma_1}"', two heads, from=2-5, to=3-4]
\end{tikzcd}\]
where the maps $\sigma_1$ and $\sigma_{-1}$ are the $\sigma_n$ as in Proposition \ref{bdos} and the maps $\pi_i$ are defined as:

\begin{align*}
    \pi_1(s_i) &= t_i \hspace{3cm} 0 \leq i < n\\
    \pi_1(s_n) &= -1\\
    \pi_2(s_i) &= s_i \otimes 1 \hspace{3cm} 0 \leq i < n\\
    \pi_2(s_n) &= 1 \otimes z\\
    \pi_3(s_i) &= t_i \hspace{3cm} 0 \leq i < n\\
    \pi_3(s_n) &= 1\\
\end{align*}
\end{theorem}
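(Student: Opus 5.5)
The plan is to apply Proposition \ref{multi} to the star-shaped diagram centred at $C(S_H^{2n-1})\otimes C[-1,1]$, with $A_1=A_3=\mathcal{T}^n$, $A_2=C(S_H^{2n-1})\otimes C[-1,1]$, $A_{12}=A_{23}=C(S_H^{2n-1})$.

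\textbf{Well-definedness and commutativity.} First I check that $\pi_1,\pi_2,\pi_3$ are well-defined $*$-homomorphisms on $C(S_H^{2n})$. By the preceding presentation, this amounts to checking that the images satisfy relations (1)--(5).
\begin{itemize}
\item Relations (1)--(3) and (5) are immediate in $\mathcal{T}^n$ and in $C(S_H^{2n-1})\otimes C[-1,1]$, since $z$ is self-adjoint of norm at most one.
\item For (4) I argue at the level of $\mathcal{T}^n\otimes C[-1,1]$: each $\pi_i$ is the quotient of a map out of $\mathcal{T}^n\otimes C[-1,1]$, so it suffices that the generator $\prod(1-t_it_i^*)(1-z^2)$ of $I$ is killed. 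For $\pi_1$ and $\pi_3$ it dies because $z\mapsto\pm1$. For $\pi_2$ it dies because $\prod_{i<n}(1-s_is_i^*)=0$ in $C(S_H^{2n-1})$.
\item Commutativity is checked on generators: $\sigma_{\pm1}\pi_{1,3}(s_i)=s_i=ev_{\pm1}\pi_2(s_i)$, and $s_n\mapsto\pm1$ on both sides.
\end{itemize}
All three $\pi_i$ are surjective, since they hit generators.

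\textbf{Conditions of Proposition \ref{multi}.} I work in the model $C(S_H^{2n})=\mathcal{T}^n\otimes C[-1,1]/(\mathcal{K}\otimes C_0(-1,1))$. Here $\pi_1,\pi_3$ are induced by $ev_{-1},ev_1$, and $\pi_2$ by $\sigma_n\otimes\mathrm{id}$.

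\emph{Condition (1).} An element $x=[a]$ lies in all three kernels when:
\begin{itemize}
\item $a(\pm1)=0$, so $a\in\mathcal{T}^n\otimes C_0(-1,1)$;
\item $(\sigma_n\otimes\mathrm{id})(a)=0$, so $a\in\mathcal{K}\otimes C[-1,1]$ by Proposition \ref{bdos} and exactness of $C[-1,1]$.
\end{itemize}
The intersection of these two is $\mathcal{K}\otimes C_0(-1,1)$, which is exactly the ideal $I$, so $x=0$.

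\emph{Condition (2).} Since $\sigma_{\pm1}$ are surjective, the right-hand side is all of $A_2$, and $\pi_2$ is surjective.

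\emph{Condition (3).} Take $k=1$ (the case $k=3$ is symmetric). I need $\ker\sigma_{-1}=\mathcal{K}\subseteq\mathcal{T}^n$ to equal $\pi_1(\ker\pi_2\cap\ker\pi_3)$. The inclusion $\supseteq$ follows from commutativity. For $\subseteq$, given $k_0\in\mathcal{K}$, I choose $\phi\in C[-1,1]$ with $\phi(-1)=1$ and $\phi(1)=0$, and take $x=[k_0\otimes\phi]$. Then:
\begin{itemize}
\item $\pi_2(x)=\sigma_n(k_0)\otimes\phi=0$;
\item $\pi_3(x)=k_0\phi(1)=0$;
\item $\pi_1(x)=k_0$.
\end{itemize}

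Proposition \ref{multi} then gives that $\sigma$ is an isomorphism onto the multipullback.

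\textbf{Main obstacle.} The step I expect to be delicate is identifying $\ker\pi_2$ inside the quotient, and the intersection in condition (1). This needs $\ker(\sigma_n\otimes\mathrm{id})=\mathcal{K}\otimes C[-1,1]$, which holds by nuclearity of $C[-1,1]$. It also needs $(\mathcal{T}^n\otimes C_0(-1,1))\cap(\mathcal{K}\otimes C[-1,1])=\mathcal{K}\otimes C_0(-1,1)$, which is clear by viewing all of these as $C[-1,1]$-valued functions.

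It also relies on the earlier identification of $I$ with $\mathcal{K}\otimes C_0(-1,1)$, that is, $\prod_{i<n}(1-t_it_i^*)$ generating $\mathcal{K}\subseteq\mathcal{T}^n$. I will take that identification from the preceding proposition rather than reprove it.
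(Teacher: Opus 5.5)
Your proposal is correct and follows the paper's route: apply Proposition \ref{multi}, get condition (2) from the surjectivity of $\sigma_{\pm1}$ and $\pi_2$, and get condition (3) by multiplying a compact element by a scalar function equal to $1$ at $-1$ and $0$ at $1$ (the paper uses exactly $(1-s_n)/2$, which is your $\phi$). The only real difference is in condition (1): you compute the kernels concretely in the model $C([-1,1],\mathcal{T}^n)/C_0((-1,1),\mathcal{K})$, whereas the paper identifies them as the ideals generated by $1\pm s_n$ and $\prod(1-s_is_i^*)$ and uses centrality of $s_n$; your version, together with your explicit check that the $\pi_i$ are well defined and that the diagram commutes, is if anything more careful.
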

\begin{proof}
We need to check that conditions (1)-(3) of Proposition \ref{multi} are satisfied.

(1) 

$ker\pi_1$ is the ideal generated by $(1+s_n)$, $ker\pi_3$ is the ideal generated by $(1-s_n)$ while the kernel of $\pi_2$ is generated by $\prod_{0 \leq i <n}(1-s_is_i^*)$. So as $s_n$ is central, we have that $\bigcap ker\pi_i = \langle\prod (1-s_is_i^*)(1-s_n^2) \rangle$ which is $0$ by the last generating relation of $C(S^{2n}_H)$.

(2)

All the maps are surjective, so this condition follows trivially.

(3)
$ker\pi_2 \cap ker\pi_3$ is inside the ideal generated by $(1-s_n)\prod(1-s_is_i^*)$ which maps to $2\prod(1-s_is_i^*) = ker\sigma_{-1}$. Furthermore, if $a \in \langle \prod (1-t_it_i^*) \rangle$ and due to surjectivity of $\pi_1$, we have that there exists a $b \in \langle \prod (1-s_is_i^*) \rangle$ such that $\pi_1(b) = a$, then consider the element $\pi_1(b(1-s_n)/2) = a$. $\pi_2 (\prod(1 - s_is_i^*)) = 0$ by the generating relation for $C(S_H^{2n-1})$, and $\pi_3(1-s_n) = 0$, so we have the other inequality. 
\end{proof}

\begin{remark}
This shows that $C(S_H^{2n})$ is a $K$-weak suspension of $C(S_H^{2n-1})$, as $\mathcal{T}^n$ has the same $K$-theory as that of a point, and replacing $\mathcal{T}^n$ with the $C^*$-algebra of a  point i.e. $\mathbb{C}$ yields the usual suspension of $C^*$-algebras. 
\end{remark}

\section{Acknowledgements}
This research is part of the EU Staff Exchange project 101086394 Operator Algebras That One Can See. The project is co-financed by the Polish Ministry of Education and Science under the program PMW (grant agreement 5305/HE/2023/2). The author is grateful to Piotr M. Hajac for numerous conversations regarding multipullback $C^*$-algebras. The author is also grateful to Francesco D'Andrea and Elmar Wagner for discussions on the quantum real projective spaces. Finally, the author would also like to extend gratitude to Elmar Wagner and Universidad Michoacana de San Nicolás de Hidalgo, Masoud Khalkali and Western University, George Elliot and Fields Insitute for their kind hospitality. The author would further want acknowledge the use of Claude 5.5 Opus for grammar checking and copy editing. No proof was generated using generative AI and the author would like to claim full responsibility of the results presented.
\bigskip
\printbibliography

@article{hajac_quantum_2003,
	title = {Quantum Real Projective Space, Disc and Spheres},
	volume = {6},
	issn = {1572-9079},
	url = {https://doi.org/10.1023/A:1023288309786},
	doi = {10.1023/A:1023288309786},
	pages = {169--192},
	number = {2},
	journaltitle = {Algebras and Representation Theory},
	shortjournal = {Algebras and Representation Theory},
	author = {Hajac, Piotr M. and Matthes, Rainer and Szymański, Wojciech},
	urldate = {2025-08-12},
	date = {2003-05-01},
	langid = {english},
}

@article{katsura_class_2004,
	title = {A class of 𝐶*-algebras generalizing both graph algebras and homeomorphism 𝐶*-algebras I, fundamental results},
	volume = {356},
	issn = {0002-9947, 1088-6850},
	url = {https://www.ams.org/tran/2004-356-11/S0002-9947-04-03636-0/},
	doi = {10.1090/S0002-9947-04-03636-0},
	pages = {4287--4322},
	number = {11},
	journaltitle = {Transactions of the American Mathematical Society},
	shortjournal = {Trans. Amer. Math. Soc.},
	author = {Katsura, Takeshi},
	urldate = {2024-05-27},
	date = {2004},
	langid = {english},
}

@article{hajac_k-theory_2018,
	title = {The \$K\$-theory of twisted multipullback quantum odd spheres and complex projective spaces},
	volume = {12},
	issn = {1661-6952, 1661-6960},
	url = {https://ems.press/doi/10.4171/jncg/292},
	doi = {10.4171/jncg/292},
	pages = {823--863},
	number = {3},
	journaltitle = {Journal of Noncommutative Geometry},
	shortjournal = {J. Noncommut. Geom.},
	author = {Hajac, Piotr M. and Nest, Ryszard and Pask, David and Sims, Aidan and Zieliński, Bartosz},
	urldate = {2024-01-21},
	date = {2018-07-27},
}

@article{hong_noncommutative_2008,
	title = {Noncommutative balls and mirror quantum spheres},
	volume = {77},
	rights = {© 2008 London Mathematical Society},
	issn = {1469-7750},
	url = {https://onlinelibrary.wiley.com/doi/abs/10.1112/jlms/jdn003},
	doi = {10.1112/jlms/jdn003},
	pages = {607--626},
	number = {3},
	journaltitle = {Journal of the London Mathematical Society},
	author = {Hong, Jeong Hee and Szymański, Wojciech},
	urldate = {2023-11-03},
	date = {2008},
	langid = {english},
	note = {\_eprint: https://onlinelibrary.wiley.com/doi/pdf/10.1112/jlms/jdn003},
}

@article{pedersen_pullback_1999,
	title = {Pullback and Pushout Constructions in C*-Algebra Theory},
	volume = {167},
	issn = {0022-1236},
	url = {https://www.sciencedirect.com/science/article/pii/S0022123699934560},
	doi = {10.1006/jfan.1999.3456},
	pages = {243--344},
	number = {2},
	journaltitle = {Journal of Functional Analysis},
	shortjournal = {Journal of Functional Analysis},
	author = {Pedersen, Gert K.},
	urldate = {2023-05-04},
	date = {1999-10-01},
	langid = {english},
}

@article{hong_quantum_2002,
	title = {Quantum Spheres and Projective Spaces as Graph Algebras},
	volume = {232},
	issn = {1432-0916},
	url = {https://doi.org/10.1007/s00220-002-0732-1},
	doi = {10.1007/s00220-002-0732-1},
	pages = {157--188},
	number = {1},
	journaltitle = {Communications in Mathematical Physics},
	shortjournal = {Commun. Math. Phys.},
	author = {Hong, Jeong Hee and Szymański, Wojciech},
	urldate = {2022-04-22},
	date = {2002-12-01},
	langid = {english},
}

@misc{saurabh_cso_q2n1so_q2n-1_2026,
	title = {\$C({SO}\_q(2n+1)/{SO}\_q(2n-1))\$ as iterated torsioned quantum double suspensions of \$C({\textbackslash}mathbb\{T\})\$},
	url = {http://arxiv.org/abs/2602.17075},
	doi = {10.48550/arXiv.2602.17075},
	number = {{arXiv}:2602.17075},
	publisher = {{arXiv}},
	author = {Saurabh, Bipul},
	urldate = {2026-08-10},
	date = {2026-02-19},
	eprinttype = {arxiv},
	eprint = {2602.17075 [math.OA]},
}

@misc{dandrea_quantum_2026,
	title = {Quantum {CW}-complexes in a Waldhausen category for unital C*-algebras},
	rights = {{arXiv}.org perpetual, non-exclusive license},
	url = {https://arxiv.org/abs/2002.09015},
	doi = {10.48550/ARXIV.2002.09015},
	publisher = {{arXiv}},
	author = {D'Andrea, Francesco and Hajac, Piotr M. and Maszczyk, Tomasz and Zielinski, Bartosz},
	urldate = {2026-09-19},
	date = {2026-09-10},
	note = {Version Number: 4},
}

@article{chakraborty_quantum_2011,
	title = {Quantum double suspension and spectral triples},
	volume = {260},
	rights = {https://www.elsevier.com/tdm/userlicense/1.0/},
	issn = {00221236},
	url = {https://linkinghub.elsevier.com/retrieve/pii/S0022123611000255},
	doi = {10.1016/j.jfa.2011.01.009},
	pages = {2716--2741},
	number = {9},
	journaltitle = {Journal of Functional Analysis},
	shortjournal = {Journal of Functional Analysis},
	author = {Chakraborty, Partha Sarathi and Sundar, S.},
	urldate = {2026-09-25},
	date = {2011-05},
	langid = {english},
}

@article{katsura_class_2006,
	title = {A {CLASS} {OF} C*-{ALGEBRAS} {GENERALIZING} {BOTH} {GRAPH} {ALGEBRAS} {AND} {HOMEOMORPHISM} C*-{ALGEBRAS} {II}, {EXAMPLES}},
	volume = {17},
	issn = {0129-167X, 1793-6519},
	url = {https://www.worldscientific.com/doi/abs/10.1142/S0129167X06003722},
	doi = {10.1142/S0129167X06003722},
	pages = {791--833},
	number = {7},
	journaltitle = {International Journal of Mathematics},
	shortjournal = {Int. J. Math.},
	author = {Katsura, Takeshi},
	urldate = {2026-09-25},
	date = {2006-08},
	langid = {english},
}

@article{katsura_class_2006-1,
	title = {A class of \$\{C{\textasciicircum}*\}\$-algebras generalizing both graph algebras and homeomorphism \$\{C{\textasciicircum}*\}\$-algebras {III}, ideal structures},
	volume = {26},
	rights = {https://www.cambridge.org/core/terms},
	issn = {0143-3857, 1469-4417},
	url = {https://www.cambridge.org/core/product/identifier/S0143385706000320/type/journal_article},
	doi = {10.1017/S0143385706000320},
	pages = {1805--1854},
	number = {6},
	journaltitle = {Ergodic Theory and Dynamical Systems},
	shortjournal = {Ergod. Th. Dynam. Sys.},
	author = {Katsura, Takeshi},
	urldate = {2026-09-25},
	date = {2006-12},
	langid = {english},
}

@article{gothe_gluing_2026,
	title = {Gluing topological graph C∗-algebras},
	issn = {1793-5253, 1793-7167},
	url = {https://www.worldscientific.com/doi/10.1142/S1793525326500196},
	doi = {10.1142/S1793525326500196},
	pages = {1--29},
	journaltitle = {Journal of Topology and Analysis},
	shortjournal = {J. Topol. Anal.},
	author = {Gothe, Atul and Quigg, John and Tobolski, Mariusz},
	urldate = {2026-09-25},
	date = {2026-02-07},
	langid = {english},
}

@article{katsura_c-algebras_2004,
	title = {On C∗-algebras associated with C∗-correspondences},
	volume = {217},
	rights = {https://www.elsevier.com/tdm/userlicense/1.0/},
	issn = {00221236},
	url = {https://linkinghub.elsevier.com/retrieve/pii/S002212360400093X},
	doi = {10.1016/j.jfa.2004.03.010},
	pages = {366--401},
	number = {2},
	journaltitle = {Journal of Functional Analysis},
	shortjournal = {Journal of Functional Analysis},
	author = {Katsura, Takeshi},
	urldate = {2026-09-25},
	date = {2004-12},
	langid = {english},
}

@article{hajac_strong_1996,
	title = {Strong connections on quantum principal bundles},
	volume = {182},
	rights = {https://www.springer.com/tdm},
	issn = {0010-3616, 1432-0916},
	url = {https://link.springer.com/10.1007/BF02506418},
	doi = {10.1007/BF02506418},
	pages = {579--617},
	number = {3},
	journaltitle = {Communications in Mathematical Physics},
	shortjournal = {Commun.Math. Phys.},
	author = {Hajac, Piotr M.},
	urldate = {2026-09-25},
	date = {1996-12},
	langid = {english},
}

@article{hajac_covariant_2024,
	title = {The covariant functoriality of graph algebras},
	volume = {56},
	issn = {0024-6093, 1469-2120},
	url = {https://londmathsoc.onlinelibrary.wiley.com/doi/10.1112/blms.13125},
	doi = {10.1112/blms.13125},
	pages = {3269--3288},
	number = {11},
	journaltitle = {Bulletin of the London Mathematical Society},
	shortjournal = {Bulletin of London Math Soc},
	author = {Hajac, Piotr M. and Tobolski, Mariusz},
	urldate = {2026-09-25},
	date = {2024-11},
	langid = {english},
}

@article{schochet_topological_1982,
	title = {Topological methods for \textit{C}$^{\textrm{∗}}$ -algebras. {II}. Geometry resolutions and the Künneth formula},
	volume = {98},
	issn = {0030-8730, 0030-8730},
	url = {http://msp.org/pjm/1982/98-2/p15.xhtml},
	doi = {10.2140/pjm.1982.98.443},
	pages = {443--458},
	number = {2},
	journaltitle = {Pacific Journal of Mathematics},
	shortjournal = {Pacific J. Math.},
	author = {Schochet, Claude},
	urldate = {2026-09-25},
	date = {1982-02-01},
	langid = {english},
}

@article{paulovicks_cuntz-krieger_2025,
	title = {A Cuntz-Krieger uniqueness theorem for Cuntz-Pimsner algebras},
	volume = {131},
	issn = {1903-1807, 0025-5521},
	url = {https://journals.msp.org/mscand/article/view/21},
	doi = {10.7146/math.scand.a-156724},
	number = {2},
	journaltitle = {{MATHEMATICA} {SCANDINAVICA}},
	shortjournal = {Math. Scand.},
	author = {Paulovicks, Menevçse Eryüzlü and Tomforde, Mark},
	urldate = {2026-09-25},
	date = {2025-07-22},
}

@article{bates_c-algebras_2000,
	title = {The {\textbackslash}[C{\textasciicircum}\{*\}{\textbackslash}]-algebras of row-finite graphs},
	volume = {6},
	pages = {307--324},
	journaltitle = {New York Journal of Mathematics},
	publisher = {New York Journal of Mathematics},
	author = {Bates, Teresa and Pask, David and Raeburn, Iain and Szymański, Wojciech},
	date = {2000},
}

@collection{voiculescu_free_1996,
	location = {Providence, Rhode Island},
	title = {Free Probability Theory},
	isbn = {978-1-4704-2980-5},
	url = {https://www.ams.org/fic/012},
	doi = {10.1090/fic/012},
	publisher = {American Mathematical Society},
	editor = {Voiculescu, Dan-Virgil},
	urldate = {2026-09-26},
	date = {1996-12-10},
	langid = {english},
}

@thesis{foster_quantum_2005,
	title = {Quantum odd-dimensional balls},
	institution = {University of Newcastle},
	type = {Honours thesis},
	author = {Foster, J.D.},
	date = {2005},
}

@article{pedersen_stability_1998,
	title = {Stability of anticommutation relations: An Application of noncommutative {CW}-complexes},
	volume = {1998},
	issn = {0075-4102, 1435-5345},
	url = {https://www.degruyter.com/document/doi/10.1515/crll.1998.055/html},
	doi = {10.1515/crll.1998.055},
	shorttitle = {Stability of anticommutation relations},
	pages = {101--143},
	number = {499},
	journaltitle = {Journal für die reine und angewandte Mathematik (Crelles Journal)},
	author = {Pedersen, Gert K. and Loring, Terry A. and Eilers, Søren},
	urldate = {2026-10-01},
	date = {1998-06-01},
}

@article{hajac_cocycle_2012,
	title = {Cocycle condition for multi-pullbacks of algebras},
	volume = {98},
	issn = {0137-6934, 1730-6299},
	url = {http://journals.impan.pl/cgi-bin/doi?bc98-0-9},
	doi = {10.4064/bc98-0-9},
	pages = {239--243},
	journaltitle = {Banach Center Publications},
	shortjournal = {Banach Center Publ.},
	author = {Hajac, Piotr M. and Zieliński, Bartosz},
	urldate = {2026-10-15},
	date = {2012},
	langid = {english},
}

\bigskip
\vspace{10cm}
\Addresses

\end{document}